\def\diag{\mathop{\rm diag}\nolimits}
\def\tr{\mathop{\rm tr}\nolimits}
\def\build#1#2#3{\mathrel{\mathop{#1}\limits^{#2}_{#3}}}
\def\Vec{\mathop{\rm vec}\nolimits}
\def\etr{\mathop{\rm etr}\nolimits}
\def\cov{\mathop{\rm Cov}\nolimits}
\def\var{\mathop{\rm Var}\nolimits}
\def\rank{\mathop{\rm rank}\nolimits}
\def\Half{\mathop{\rm \frac{1}{2}}\nolimits}
\renewenvironment{abstract}
                 {\vspace{6pt}
                  \begin{center}
                  \begin{minipage}{5in}
                  \centerline{\textbf{Abstract}}
                  \noindent\ignorespaces
                 }
                 {\end{minipage}\end{center}}
\newtheorem{theorem}{\textbf{Theorem}}[section]
\newtheorem{corollary}{\textbf{Corollary}}[section]
\theoremstyle{definition}
\newtheorem{definition}{\textbf{Definition}}[section]
\newtheorem{remark}{\textbf{Remark}}[section]
\title{\Large \textbf{Estimation of mean form and mean form difference under elliptical laws}}
\author{
  \textbf{Francisco J. Caro-Lopera} \thanks{Corresponding author\newline
   {\bf Key words.}  Coordinate free approach, non-central singular Pseudo-Wishart distribution,
   statistical shape theory, matrix multivariate elliptical distribution, matrix multivariate gaussian distribution.\newline
    2000 Mathematical Subject Classification. 62E15; 62E05; 62H12; 62H30: 62H35}\\
  {\normalsize Departament of Basic Sciences} \\
  {\normalsize Universidad de Medell\'{\i}n} \\
  {\normalsize Medell\'{\i}n, Colombia} \\
  {\normalsize E-mail: fjcaro@udem.edu.co} \\[2ex]
  \textbf{Jos\'e A. D\'{\i}az-Garc\'{\i}a} \\
  {\normalsize Universidad Aut\'onoma Agraria Antonio Narro} \\
  {\normalsize Calzada Antonio Narro 1923, Col. Buenavista} \\
  {\normalsize 25315 Saltillo, Coahuila, M\'exico}\\
  {\normalsize E-mail: jadiaz@uaaan.mx}\\
}
\date{}
\begin{document}
\maketitle

\begin{abstract}
Some ideas studied by \cite{l:93}, under a Gaussian perturbation model, are generalised in  the setting
of matrix multivariate elliptical distributions. In particular, several inaccuracies in the published
statistical perturbation model are revised. In addition, a number of aspects about identifiability and
estimability are also considered.  Instead of using  the Euclidean distance matrix for proposing
consistent estimates, this paper determines exact formulae for the moments of matrix $\mathbf{B} =
\mathbf{X}^{c}\left(\mathbf{X}^{c}\right)^{T}$, where $\mathbf{X}^{c}$ is the centered landmarks matrix.
Consistent estimation of mean form difference under elliptical laws is also studied. Finally, the main
results of the paper and some methodologies for selecting models and hypothesis testing  are applied to a
real landmark data. comparing correlation shape structure is proposed and applied in handwritten
differentiation.
\end{abstract}

\section{Introduction}\label{sec1}
Statistical theory of shape has emerged as one of the most versatile techniques of classification and
comparison of ``objects" in a number of disciplines.  By its theoretical nature, the matrix multivariate
distribution analysis fits very well into the shape analysis, but at the same time have involved strong
open problems on estimation of location and scale population parameters based on the exact distributions,
forcing the application of several less robust approaches, which were considered appropriate at first,
but later received important critics from different experts on morphometrics and related fields, see
\cite{l:93}.

Among the addressed lacks we can cite the use of asymptotic distributions, tangent plane inference,
isotropic models, Gaussian assumptions, and procrustes theory; for details of such techniques see
\cite{DM98} and the references therein.

Now, some attempts have been published recently avoiding the above restrictions and considering inference
via likelihood function using the exact shape distributions, the new theory was  termed generalised shape
theory by finding the exact shape densities indexed by families of distributions of elliptical contours.
According to the geometrical filters on shape, the resulting exact invariant distributions are expressed
in terms of series of functions termed Jack polynomials, which were uncomputable for decades, and only
recently with the works of \cite{KE06}, the individual polynomials could be computed but series of them
have involved serious problems in inference of population parameters via likelihood method. A number of
approaches with a meaningful computational success in the context of the classical Gaussian and
elliptical models are given as follows: via QR decomposition, see \cite{gm:93} and  \cite{DC14}, singular
value decompositions, see \cite{LK93}, \cite{g:91}, \cite{dgr:03}, \cite{DC12a} and \cite{dc:12}), affine
see \cite{gm:93}, \cite{dgr:03}, \cite{cdg:09}, \cite{CD12} and \cite{CDG14}), and Pseudo-Wishart, see
\cite{dc:13}. However, a feasible approach dealing with computable exact densities and a likelihood
function based on polynomials of very low degree was published recently, letting robust estimation on
location and scale population parameters very accurate; it models shapes under certain conditions via
affine transformation, which means that it removes from objects, any geometrical information of rotation,
translation, scaling and uniform shear. Meanwhile, the similarity (Euclidean) transformations via QR,
SVD, Pseudo-Wishart (invariant under rotation, translation, scaling) capture the attention of most of the
users of shape theory and is the source of the main critics.

Under Euclidean transformations the shape distributions are extremely difficult to compute and then the
associated inference, it forces the use of isotropic models, an  assumption which is unrealistic in
biology, for example,  since it says that landmarks vary independently of each other along different axes
but are correlated along a fixed axis. In fact, this isotropy assumption is very common in literature,
leaving the problem of testing solely whether shapes are equal; but for biologists, for example, they
want to identify the correlation structure of landmarks and the structures of shape which absorbs the
meaningful differences. Moreover, estimation of a full covariance structure would be the desirable
result, because, correlation among landmarks and axis is important, but the correlation among objects in
the sample should provide a complete comprehension of the involved populations, see \cite{lr:90}

Instead of estimation via likelihood method, some authors have proposed,  the Gaussian case, the
method-of-moments estimators of the mean form and the variance-covariance structure which are consistent
and simple to compute, see for example \cite{lr:91}, \cite{l:93}, \cite{rdl:02} and the references
therein. In fact, the technique was set as a critics of generalised procrustes analysis, by proving that
application of the last analysis yield inconsistent estimators of the mean form, mean shape, and
variance-covariance structure, and then all the statistical inference procedures  can produce inaccurate
results. \cite{w:01} recently reiterated the conclusions of \cite{l:93} by reporting the inability of
Procrustes methods to estimate the correct variance-covariance structure and the associated implications
for statistical inference. This aspect, is crucial because Procrustes analysis is one of the most common
method of estimation in several fields such as  morphometrics.

Given the computation problems of maximum likelihood estimators, the method-of-moments estimators emerges
as one of the promissory techniques in shape theory, however some open problems must be studied deeply.
For example,  some inaccuracies of this model presented in \cite{l:93}, assuming a matrix multivariate
Gaussian distribution must be nuanced first, and second, the method should allow non Gaussian samples,
a realistic and very common problem in morphometrics and the
usual applied areas for shape, a suitable solution comes from families of elliptical contoured
distributions, which exhibit lighter or heavier tails, or greater or less kurtosis than the Gaussian
model. Setting generalised shape theory also must include criteria for selecting models and hypothesis
testing, in order to provide an integrating theory suitable to be applied in meaningful scenarios.

Clarifying the inaccuracies of \cite{l:93} ideas, and their connection with some
theoretical studies by \cite{mn:79},  \cite{Mh:82} and \cite{dg:94} should give a unified theory setting
the isolated Gaussian approach into the general framework of the existing generalised matrix multivariate
elliptical shape theory.

Thus, estimation of mean form and mean form difference under elliptical laws is placed in this work as
follows: Section \ref{sec2} clarifies some results of the published Gaussian case and
propose the generalisation in the context of matrix multivariate elliptical distributions, it includes
some properties of matrix multivariate elliptical distribution, identifiability and estimability of the
parameters of interest, the perturbation model under a matrix multivariate elliptical distribution, and
invariance and nuisance parameters. Then Section \ref{sec3} studies the consistent estimation of the
population parameters under dependence and independence and provides exact formulae for the moments
estimators. Section \ref{sec4} provides a consistent estimation for a general non-negative definite
correlation matrix. The analysis also includes extensions to elliptical models of form difference under
the perspective of Euclidean Distance Matrix, see Section \ref{sec5}. Finally, a complete example
collecting the main results of the paper and proposing some selecting model criteria, is proposed in
Section \ref{exa}.

\section{Preliminary results}\label{sec2}

In this section we review some notation and distributional results. Also, the statistical model
to be used throughout the paper, is established and analysed. In particular some inaccuracies
of this model presented in \cite{l:93}, assuming a matrix multivariate Gaussian distribution
are corrected and then is generalised to the case where an  matrix multivariate elliptical
distribution is assumed.

\subsection{Matrix multivariate elliptical distribution}

A detailed discussion of the matrix multivariate elliptical distribution can be found for
example in \cite{fz:90} and \cite{gv:93}, among many others.

\begin{remark}
For matrix multivariate Gaussian and elliptical distributions, traditionally are used two forms
for establish that a random matrix $\mathbf{Y}$ has one of these distribution. For example in
matrix multivariate Gaussian case, this fact is written as
$$
  \mathbf{Y} \sim \mathcal{N}_{K \times D}(\boldsymbol{\mu}, \mathbf{\Sigma}, \mathbf{\Theta}),
$$
see \cite{a:81}, \cite{d:99} and \cite{fz:90} among many others authors. However, as is study
in \cite{l:93}, \cite{d:99} among others, in general the parameters $\mathbf{\Sigma}$ and
$\mathbf{\Theta}$ are not identifiable one-by-one, but $\mathbf{\Sigma} \otimes
\mathbf{\Theta}$ or $\mathbf{\Theta} \otimes \mathbf{\Sigma}$ are identifiable. Here $\otimes$
denotes the usual Kronecker product. In addition, given that $\cov(\Vec \mathbf{Y}) =
\mathbf{\Theta} \otimes \mathbf{\Sigma}$, and $\cov(\Vec \mathbf{Y}^{T}) = \mathbf{\Sigma}
\otimes \mathbf{\Theta}$, many other authors use the notation
$$
  \mathbf{Y} \sim \mathcal{N}_{K \times D}(\boldsymbol{\mu}, \mathbf{\Sigma} \otimes \mathbf{\Theta}),
$$
where ``$\Vec$" denotes the vectorisation operator, see \cite{Mh:82} and \cite{gv:93}.
Analogous situation is present for matrix multivariate elliptical distributions. We shall use
this last notation.
\end{remark}

\begin{definition}
It is say that $\mathbf{Y}$ has a matrix multivariate elliptical distribution, with location
parameter matrix $\boldsymbol{\mu} \in \Re^{K \times D}$ and  scala parameter matrix
$\mathbf{\Sigma} \otimes \mathbf{\Theta} \in \Re^{KD \times KD}$; where $\mathbf{\Sigma}$ is a
definite positive matrix, $\mathbf{\Sigma} > 0$ and $\mathbf{\Theta} > 0$, with
$\mathbf{\Sigma} \in \Re^{K \times K}$ and $\mathbf{\Theta} \in \Re^{D \times D}$, if its
density function with respect to Lebesgue measure is given by
\begin{equation}\label{ED1}
    dF_{_{\mathbf{Y}}}(\mathbf{Y}) = |\mathbf{\Sigma}|^{-D/2} |\mathbf{\Theta}|^{-K/2} h[\tr
    \mathbf{\Theta}^{-1} (\mathbf{Y}-\boldsymbol{\mu})^{T}\mathbf{\Sigma}^{-1}
    (\mathbf{Y}-\boldsymbol{\mu})] (d\mathbf{Y}),
\end{equation}
where the function  $h: \Re \rightarrow [0,\infty)$ is such that $\int_{0}^\infty
u^{KD/2-1}h(u)du < \infty$. The function $h$ is termed the \textit{density generator}. Its
characteristic function is given by
\begin{equation}\label{ecf}
    \psi_{\mathbf{Y}}(\mathbf{T}) = \etr(i \boldsymbol{\mu}^{T} \mathbf{T}) \phi(\tr \mathbf{T
  \Theta} \mathbf{T}^{T} \mathbf{\Sigma}),
\end{equation}
with $i = \sqrt{-1}$, $\phi: [0, \infty) \rightarrow \Re$ and $\etr(\cdot) = \exp (\tr
(\cdot))$. This fact is denoted as $\mathbf{Y} \sim \mathcal{E}_{K \times D}(\boldsymbol{\mu},
\mathbf{\Sigma} \otimes \mathbf{\Theta}, h)$. In addition, observe that the characteristic
function exist still when $\mathbf{\Sigma}$ and/or $\mathbf{\Theta}$ are semidefinite positive
matrices; in such case it say that $\mathbf{Y}$  has a \textit{singular matrix multivariate
elliptical distribution}, see Remark \ref{rem2} below.

In addition, note that $\cov(\Vec \mathbf{Y}) = c_{0}\mathbf{\Theta} \otimes \mathbf{\Sigma}$,
and $\cov(\Vec \mathbf{Y}^{T}) = c_{0}\mathbf{\Sigma} \otimes \mathbf{\Theta}$ where $c_{0}
=-2\phi'(0)$,
$$
  \phi'(0) = \left . \frac{d \phi(t^{2})}{dt}\right|_{t=0}.
$$
see \cite[Theorm 2.6.5, p. 62]{fz:90} and \cite[Corollary 3.2.1.1, p. 94 and Theorem 2.4.1, p.
33]{gv:93}.
\end{definition}

Is easy to see that if
$$
  \mathbf{Y} = \left (
    \begin{array}{c}
      \mathbf{Y}_{(1)}^{T} \\
      \mathbf{Y}_{(2)}^{T} \\
      \vdots \\
      \mathbf{Y}_{(k)}^{T}
    \end{array}
  \right ) = (\mathbf{Y}_{1}, \mathbf{Y}_{2}, \dots, \mathbf{Y}_{D}),
  \quad
  \boldsymbol{\mu} = \left (
    \begin{array}{c}
      \boldsymbol{\mu}_{(1)}^{T} \\
      \boldsymbol{\mu}_{(2)}^{T} \\
      \vdots \\
      \boldsymbol{\mu}_{(k)}^{T}
    \end{array}
  \right ) = (\boldsymbol{\mu}_{1}, \boldsymbol{\mu}_{2}, \dots, \boldsymbol{\mu}_{D}),
$$
$$
  \mathbf{\Sigma} = \left(
    \begin{array}{cccc}
      \sigma_{11} & \sigma_{12} & \cdots & \sigma_{1K} \\
      \sigma_{21} & \sigma_{22} & \cdots & \sigma_{2K} \\
      \vdots & \vdots & \ddots & \vdots \\
      \sigma_{K1} & \sigma_{K2} & \cdots & \sigma_{KK}
    \end{array}
    \right )
  \mbox{ and }
  \mathbf{\Theta} = \left(
    \begin{array}{cccc}
      \theta_{11} & \theta_{12} & \cdots & \theta_{1D} \\
      \theta_{21} & \theta_{22} & \cdots & \theta_{2D} \\
      \vdots & \vdots & \ddots & \vdots \\
      \theta_{D1} & \theta_{D2} & \cdots & \theta_{kD}
    \end{array}
    \right ).
$$
Then from \cite[]{fz:90}
\begin{enumerate}
  \item $\mathbf{Y}_{(i)} \sim \mathcal{E}_{D}(\boldsymbol{\mu}_{(i)},\sigma_{ii}
  \mathbf{\Theta},h), \quad i = 1,2,\dots, K,$
  \item $\mathbf{Y}_{j} \sim \mathcal{E}_{K}(\boldsymbol{\mu}_{j},\theta_{jj}
  \mathbf{\Sigma},h), \quad j = 1,2,\dots,D$,
\end{enumerate}
this is,
\begin{enumerate}
  \item $\cov(\mathbf{Y}_{(i)})= c_{0}\sigma_{ii} \mathbf{\Theta}, \quad i = 1,2,\dots, K,$
  \item $\cov(\mathbf{Y}_{j}) = c_{0}\theta_{jj} \mathbf{\Sigma}, \quad j = 1,2,\dots,D$.
\end{enumerate}
These two last affirmations are incorrect stated in \cite{l:93} in the context of the
perturbation model. For this asseveration observe that this class of matrix multivariate
elliptical distributions includes Gaussian, contaminated Gaussian, Pearson type II and VII,
Kotz, Jensen-Logistic, power exponential and Bessel distributions, among others; these
distribu\-tions have tails that are more or less weighted, and/or present a greater or smaller
degree of kurtosis than the Gaussian distribution.

\subsection{Identifiability and estimability of the parameters of interest}

Now some aspects about the identifiability and estimability of the parameters
$(\boldsymbol{\mu}, \mathbf{\Sigma} \otimes \mathbf{\Theta})$ are studied.

Note that the density (\ref{ED1}) can be write as, see \cite[p. 79]{Mh:82} and \cite[Theorem
2.1.1, p. 20]{gv:93},
\begin{small}
\begin{equation}\label{ED2}
    dF_{_{\Vec\mathbf{Y}^{T}}}(\Vec\mathbf{Y}^{T}) = |\mathbf{\Sigma} \otimes \mathbf{\Theta}|^{-1/2}
    h[\Vec^{T}(\mathbf{Y}-\boldsymbol{\mu}) \left (\mathbf{\Sigma} \otimes \mathbf{\Theta}\right)^{-1}
    \Vec(\mathbf{Y}-\boldsymbol{\mu})] (d\Vec\mathbf{Y}^{T}),
\end{equation}
\end{small}
using the fact that $ \Vec^{T}\mathbf{X}( \mathbf{DB}\otimes \mathbf{C}^{T}) \Vec\mathbf{X} =
\tr(\mathbf{BX}^{T} \mathbf{CXD})$, with $\Vec^{T}\mathbf{X} \equiv (\Vec \mathbf{X})^{T}$, and
that for $\mathbf{A} \in \Re^{n \times n}$ and $\mathbf{B} \in \Re^{m \times m}$, then
$|\mathbf{A}|^{m}|\mathbf{B}|^{n} = |\mathbf{A}\otimes \mathbf{B}|$, see \cite[Section 2.2, pp.
72-76]{Mh:82} and \cite[Section 1.4, pp. 11-13]{fz:90}. Then, denoting $\Vec\mathbf{Y}^{T} =
\mathbf{y} \in \Re^{KD}$ and $\mathbf{\Sigma} \otimes \mathbf{\Theta} = \mathbf{\Xi}$, the
density (\ref{ED2}) define the distribution of the vector $\mathbf{y}$; moreover, $\mathbf{y}
\sim \mathcal{E}_{KD}(\Vec \boldsymbol{\mu}, \mathbf{\Xi}, h)$.

Now, assume that  our data consist of a sample of matrices of size $n$ from a given population,
namely $\mathbf{Y}_{1},\mathbf{Y}_{2}, \dots, \mathbf{Y}_{n}$, and define the random matrix
$$
  \mathbb{Y} =(\Vec \mathbf{Y}_{1}^{T}, \Vec \mathbf{Y}_{2}^{T},\dots,\Vec \mathbf{Y}_{n}^{T})^{T} \in \Re^{n \times KD}.
$$
From \cite{dg:94}, assuming that $\mathbf{Y}_{1},\mathbf{Y}_{2}, \dots, \mathbf{Y}_{n}$ be
independent, the density function of $\mathbb{Y}$ admit the expression
\begin{equation}\label{ED3}
    dF_{_{\mathbb{Y}}}(\mathbb{Y}) = |\mathbf{\Sigma} \otimes \mathbf{\Theta}|^{-n/2} h[\tr
    (\mathbb{Y}-\mathbb{M})(\mathbf{\Sigma} \otimes \mathbf{\Theta})^{-1}
    (\mathbb{Y}-\mathbb{M})^{T}] (d\mathbb{Y}),
\end{equation}
where
$$
  \mathbb{M} =\mathbf{1}_{n}\Vec^{T} \boldsymbol{\mu} \in \Re^{n \times KD},
$$
and $\mathbf{1}_{n} = (1,1,\dots,1)^{T}\in \Re^{n}$; this is, $\mathbb{Y} \sim \mathcal{E}_{n
\times KD}(\mathbb{M}, \mathbf{\Sigma} \otimes \mathbf{\Theta} \otimes \mathbf{I}_{n}, h)$.
Thus, taking $p = KD$ in \cite[Theorem 4.1.1, p.129]{fz:90}, and given that $KD < n$ and
$h(\cdot)$ being nonincreasing and continuous, we have that the \textit{maximum likelihood
estimate} of $(\Vec \boldsymbol{\mu}, \mathbf{\Sigma} \otimes \mathbf{\Theta})$ is
$$
  \left (\widetilde{\Vec \boldsymbol{\mu}}, \widetilde{\mathbf{\Sigma} \otimes \mathbf{\Theta}} \right) =
  (\bar{\mathbf{y}}, \lambda_{\max}\mathbf{S}),
$$
where $\lambda_{\max}$ is the critical point where the function $h^{*}(\lambda)$ has its
maximum, with
$$
  h^{*}(\lambda) = \lambda^{-KDn/2}h(KD/\lambda),
$$
$$
  \bar{\mathbf{y}}= \frac{1}{n}\mathbb{Y}^{T}\mathbf{1}_{n} \in \Re^{KD}, \mbox{ and }
  \mathbf{S} = \mathbb{Y}^{T}\mathbf{H}_{n}\mathbb{Y} \in \Re^{KD \times KD}
$$
where $\mathbf{H}_{n} = \mathbf{I}_{n}-\displaystyle
\frac{1}{n}\mathbf{1}_{n}\mathbf{1}^{T}_{n}$, defines an orthogonal projection, this is,
$\mathbf{H}_{n} = \mathbf{H}_{n}^{T} = \mathbf{H}_{n}^{2}$ . Or alternatively
$$
  \bar{\mathbf{y}}=  \frac{1}{n}\sum_{i = 1}^{n} \Vec \mathbf{Y}_{i}^{T}, \mbox{ and }
  \mathbf{S} = \sum_{i = 1}^{n} (\Vec \mathbf{Y}_{i}^{T}- \bar{\mathbf{y}})(\Vec \mathbf{Y}_{i}^{T}-
  \bar{\mathbf{y}})^{T},
$$
from where the estimator of $\boldsymbol{\mu}$ is
$$
  \widetilde{\boldsymbol{\mu}} = \bar{\mathbf{Y}}=  \frac{1}{n}\sum_{i = 1}^{n} \mathbf{Y}_{i}.
$$
From \cite[Section 4.3]{fz:90}, several properties of the maximum likelihood estimators
$\widetilde{\boldsymbol{\mu}}$ and $\widetilde{\mathbf{\Sigma} \otimes \mathbf{\Theta}}$ are
obtained as: \textit{sufficiency, completeness, consistency and unbiasedness}. Specifically,
for $\mathbb{Y}$ with the finite 2nd moment and $h(\cdot)$ be nonincreasing and continuous,
$$
  \widehat{\boldsymbol{\mu}} = \bar{\mathbf{Y}} \quad\mbox{ and } \quad \widehat{\mathbf{\Sigma} \otimes \mathbf{\Theta}}
  =\frac{1}{2(1-n)\psi'(0)}\mathbf{S},
$$
are \textit{unbiased estimators} of $\boldsymbol{\mu}$ and $\mathbf{\Sigma} \otimes
\mathbf{\Theta}$.

\begin{remark}\label{rem2}
Observe that when $\mathbf{Y} \sim \mathcal{E}_{K \times D}(\boldsymbol{\mu}, \mathbf{\Sigma}
\otimes \mathbf{\Theta}, h)$ and its columns and/or its rows are dependent linearly, is say
that $\mathbf{Y}$ has a \textit{singular matrix multivariate elliptical distribution}. Then
$\mathbf{Y}$ has density with respect to \textit{Hasusdorff measure}. Moreover, such dependent
linearly among its columns or its rows is archived in the rank of $\mathbf{\Sigma}$ and/or
$\mathbf{\Theta}$ matrices and is denoted as: $\mathbf{Y} \sim \mathcal{E}_{K \times
D}^{s,r}(\boldsymbol{\mu}, \mathbf{\Sigma} \otimes \mathbf{\Theta}, h)$, where $s =
\rank(\mathbf{\Sigma}) \leq K$ and $r = \rank(\mathbf{\Theta}) \leq D$, see \cite[Definition
2.1.1, p. 19]{gv:93},\cite{dggf:05} and \cite{dggj:06}. As in the singular matrix multivariate
Gaussian case, the maximum likelihood estimators in singular matrix multivariate elliptical
models remain valid, see \cite{k:68} and \cite[Section 8a.5, pp.528-532]{r:73}.
\end{remark}

\subsection{Perturbation model under a matrix multivariate elliptical distribution}\label{sec2.3}

Let $\mathbf{X} \in \Re^{K \times D}$ a  random matrix representing the geometrical figure
comprising $K$ landmark, or labeled, points of dimension $D$, such that $K > D$. This matrix
$\mathbf{X}$ is termed \textit{landmark coordinate matrix}, see \cite{l:93}.

Let $\mathbf{X}_{1},\mathbf{X}_{2}, \dots, \mathbf{X}_{n}$ be a independently sample of size
$n$ of landmark coordinate matrices $\mathbf{X}_{i} \in \Re^{K \times D}$, $i = 1,2,\dots,n$,
from a given population.

The statistical model to be considered in this work is a generalisation of the perturbation
model used by \cite{l:93} among others authors. Let $\boldsymbol{\mu} \in \Re^{K \times D}$
corresponding to the mean form. Let
\begin{equation}\label{mm1}
    \mathbf{X}_{i} = (\boldsymbol{\mu} + \mathbf{E}_{i})\mathbf{\Gamma}_{i} + \mathbf{t}_{i},
    \quad, i = 1,2,\dots,n,
\end{equation}
where $\mathbf{E}_{i} \sim \mathcal{E}_{K \times D}(\mathbf{0}, \mathbf{\Sigma}_{K} \otimes
\mathbf{\Sigma}_{D}, h)$, $\mathbf{\Gamma}_{i} \in \Re^{D \times D}$ are orthogonal matrices
representing rotation and/or reflection of $(\boldsymbol{\mu} + \mathbf{E}_{i})$, and
$\mathbf{t}_{i} \in \Re^{K \times D}$ are matrices such that $\mathbf{t}_{i} =
\mathbf{1}_{k}\mathbf{a}_{i}^{T}$ representing translation, for some $\mathbf{a}_{i} \in
\Re^{D}$. From \cite[eq. (3.3.10), p. 103]{fz:90} or \cite[Theorem 2.1.2, p. 20]{gv:93} we have
\begin{equation}\label{DM}
    \mathbf{X}_{i} \sim \mathcal{E}_{K \times D}(\boldsymbol{\mu}\mathbf{\Gamma}_{i}+\mathbf{t}_{i},
    \mathbf{\Sigma}_{K} \otimes \mathbf{\Gamma}^{T}_{i}\mathbf{\Sigma}_{D}\mathbf{\Gamma}_{i}, h), \quad
    i = 1,2,\dots,n.
\end{equation}
Parameters of interest are $(\boldsymbol{\mu}, \mathbf{\Sigma}_{K} \otimes
\mathbf{\Sigma}_{D})$ and $(\mathbf{\Gamma}^{T}_{i}, \mathbf{t}_{i})$ $i = 1, 2, . . . , n$ are
the nuisance parameters. An detail explained of this perturbation model is given in \cite{l:93}
among others.

Alternatively, the model (\ref{mm1}) can be write as:
\begin{equation*}\label{mm2}
    \Vec\mathbb{X}^{T} = \diag(\mathbb{G})\Vec(\mathbb{M} + \mathbb{E})^{T} + \Vec \mathbb{T}^{T},
\end{equation*}
with
$$
  \diag(\mathbb{G}) = \left (
     \begin{array}{cccc}
       \mathbf{I}_{K} \otimes \mathbf{\Gamma}_{1}^{T} & \mathbf{0} & \cdots & \mathbf{0} \\
       \mathbf{0} & \mathbf{I}_{K} \otimes \mathbf{\Gamma}_{2}^{T} & \cdots & \mathbf{0} \\
       \vdots & \vdots & \ddots & \vdots \\
       \mathbf{0} & \mathbf{0} & \cdots & \mathbf{I}_{K} \otimes \mathbf{\Gamma}_{n}^{T}
     \end{array}
  \right )
$$
or the model (\ref{mm1}) may be rewritten in the form
\begin{equation*}\label{mm3}
    \mathbb{X} = \diag(\mathbb{M} + \mathbb{E})\mathbb{G}^{T} + \mathbb{T},
\end{equation*}
with
$$
  \diag(\mathbb{\mathbb{M} + \mathbb{E}}) = \left (
     \begin{array}{cccc}
       \Vec^{T} (\boldsymbol{\mu} + \mathbf{E}_{1})^{T} & \mathbf{0} & \cdots & \mathbf{0} \\
       \mathbf{0} & \Vec^{T} (\boldsymbol{\mu} + \mathbf{E}_{2})^{T}  & \cdots & \mathbf{0} \\
       \vdots & \vdots & \ddots & \vdots \\
       \mathbf{0} & \mathbf{0} & \cdots & \Vec^{T} (\boldsymbol{\mu} + \mathbf{E}_{n})^{T}
     \end{array}
  \right )
$$
where
$$
  \mathbb{X} = \left (
    \begin{array}{c}
      \Vec^{T}\mathbf{X}_{1}^{T} \\
      \Vec^{T}\mathbf{X}_{2}^{T} \\
      \vdots \\
      \Vec^{T}\mathbf{X}_{n}^{T}
    \end{array}
  \right ),
  \
  \mathbb{M} = \mathbf{1}_{n}\Vec^{T} \boldsymbol{\mu}^{T},
  \
  \mathbb{E} = \left (
    \begin{array}{c}
      \Vec^{T}\mathbf{E}_{1}^{T} \\
      \Vec^{T}\mathbf{E}_{2}^{T} \\
      \vdots \\
      \Vec^{T}\mathbf{E}_{n}^{T}
    \end{array}
  \right )
  \
  \mathbb{T} = \left (
    \begin{array}{c}
      \Vec^{T}\mathbf{t}_{1}^{T} \\
      \Vec^{T}\mathbf{t}_{2}^{T} \\
      \vdots \\
      \Vec^{T}\mathbf{t}_{n}^{T}
    \end{array}
  \right ),
$$
and $\mathbb{G} = \left(\mathbf{I}_{K} \otimes \mathbf{\Gamma}_{1}^{T}|\mathbf{I}_{K} \otimes
\mathbf{\Gamma}_{2}^{T}|\cdots|\mathbf{I}_{K} \otimes \mathbf{\Gamma}_{n}^{T}\right )$ where
$$
  \mathbb{E} \sim \mathcal{E}_{n \times KD}(\mathbf{0}, \mathbf{I}_{n} \otimes \mathbf{\Sigma}_{K}
  \otimes \mathbf{\Sigma}_{D}, h),
$$
or
$$
  \Vec \mathbb{E}^{T} \sim \mathcal{E}_{nKD}(\Vec \mathbf{0}, \mathbf{I}_{n} \otimes \mathbf{\Sigma}_{K}
  \otimes \mathbf{\Sigma}_{D}, h),
$$
Hence
$$
  \Vec\mathbb{X}^{T} \sim \mathcal{E}_{nKD}\left(\diag(\mathbb{G})\Vec\mathbb{M}^{T} + \Vec\mathbb{T}^{T},
  \diag(\mathbb{G})(\mathbf{I}_{n} \otimes \mathbf{\Sigma}_{K}\otimes \mathbf{\Sigma}_{D})
  \diag(\mathbb{G})^{T}, h\right).
$$
Note that, recalling that for $\mathbf{x}$ and $\mathbf{y}$ vectors,
$\Vec(\mathbf{y}\mathbf{x}^{T}) = \mathbf{x} \otimes \mathbf{y}$, then
\begin{eqnarray*}
% \nonumber to remove numbering (before each equation)
  \diag(\mathbb{G})\Vec\mathbb{M}^{T} &=&\left (
     \begin{array}{cccc}
       \mathbf{I}_{K} \otimes \mathbf{\Gamma}_{1}^{T} & \mathbf{0} & \cdots & \mathbf{0} \\
       \mathbf{0} & \mathbf{I}_{K} \otimes \mathbf{\Gamma}_{2}^{T} & \cdots & \mathbf{0} \\
       \vdots & \vdots & \ddots & \vdots \\
       \mathbf{0} & \mathbf{0} & \cdots & \mathbf{I}_{K} \otimes \mathbf{\Gamma}_{n}^{T}
     \end{array}
  \right )
  \left (\mathbf{1}_{n} \otimes \Vec \boldsymbol{\mu}^{T} \right )  \\
    &=& \left(
    \begin{array}{c}
      (\mathbf{I}_{K} \otimes \mathbf{\Gamma}_{1}^{T})\Vec \boldsymbol{\mu}^{T} \\
      (\mathbf{I}_{K} \otimes \mathbf{\Gamma}_{2}^{T})\Vec \boldsymbol{\mu}^{T} \\
      \vdots \\
      (\mathbf{I}_{K} \otimes \mathbf{\Gamma}_{n}^{T})\Vec \boldsymbol{\mu}^{T}
    \end{array}
  \right ) \\
   &=& \left(
    \begin{array}{c}
      \Vec(\boldsymbol{\mu}\mathbf{\Gamma}_{1})^{T} \\
      \Vec(\boldsymbol{\mu}\mathbf{\Gamma}_{2})^{T} \\
      \vdots \\
      \Vec(\boldsymbol{\mu}\mathbf{\Gamma}_{n})^{T}
    \end{array}
  \right ),
\end{eqnarray*}
and $\diag(\mathbb{G})(\mathbf{I}_{n} \otimes \mathbf{\Sigma}_{K}\otimes \mathbf{\Sigma}_{D})
\diag(\mathbb{G})^{T}$ is
\begin{eqnarray*}
% \nonumber to remove numbering (before each equation)
   &=& = \left (
    \begin{array}{cccc}
      \mathbf{\Sigma}_{K}\otimes \mathbf{\Gamma}_{1}^{T}\mathbf{\Sigma}_{D}\mathbf{\Gamma}_{1} & \mathbf{0} & \cdots & \mathbf{0} \\
      \mathbf{0} & \mathbf{\Sigma}_{K}\otimes \mathbf{\Gamma}_{2}^{T}\mathbf{\Sigma}_{D}\mathbf{\Gamma}_{2} & \cdots & \mathbf{0} \\
      \vdots & \vdots & \ddots & \vdots \\
      \mathbf{0} & \mathbf{0} & \cdots & \mathbf{\Sigma}_{K}\otimes \mathbf{\Gamma}_{n}^{T}\mathbf{\Sigma}_{D}\mathbf{\Gamma}_{n}
    \end{array}
  \right ) \\
   &=& \sum_{i = 1}^{n} \mathbf{E}_{ii}^{n} \otimes \mathbf{\Sigma}_{K}\otimes \mathbf{\Gamma}_{i}^{T}
   \mathbf{\Sigma}_{D}\mathbf{\Gamma}_{i},
\end{eqnarray*}
where if $\mathbf{e}^{n}_{i}$ the $i$th column unit vector of order $n$, then
$\mathbf{E}_{ii}^{n} = \mathbf{e}^{n}_{i}(\mathbf{e}^{n}_{i})^{T}$.

Finally observe that
$$
  E(\Vec\mathbb{X}^{T}) =
  \left(
    \begin{array}{c}
      \Vec(\boldsymbol{\mu}\mathbf{\Gamma}_{1})^{T} \\
      \Vec(\boldsymbol{\mu}\mathbf{\Gamma}_{2})^{T} \\
      \vdots \\
      \Vec(\boldsymbol{\mu}\mathbf{\Gamma}_{n})^{T}
    \end{array}
  \right ) + \Vec \mathbb{T}^{T}= \sum_{i = 1}^{n} \mathbf{e}_{i}^{n} \otimes \left(
  \Vec(\boldsymbol{\mu}\mathbf{\Gamma}_{i})^{T} + \Vec \mathbf{t}_{i}^{T}\right)
$$
Then
$$
  E(\mathbb{X}) = \sum_{i = 1}^{n} \mathbf{e}_{i}^{n} \left(
  \Vec(\boldsymbol{\mu}\mathbf{\Gamma}_{i})^{T} + \Vec \mathbf{t}_{i}^{T}\right)^{T}.
$$
Therefore
$$
  \mathbb{X} \sim \mathcal{E}_{n \times KD}\left(\sum_{i = 1}^{n} \mathbf{e}_{i}^{n} \left(
  \Vec(\boldsymbol{\mu}\mathbf{\Gamma}_{i})^{T} + \Vec \mathbf{t}_{i}^{T}\right)^{T},
  \sum_{i = 1}^{n} \mathbf{E}_{ii}^{n} \otimes \mathbf{\Sigma}_{K}\otimes \mathbf{\Gamma}_{i}^{T}
   \mathbf{\Sigma}_{D}\mathbf{\Gamma}_{i}, h\right).
$$

\subsection{Invariance and nuisance parameters}

In general, when a model contains nuisance parameters, the first step is to remove them. As in
the matrix multivariate Gaussian model considered by \cite{l:93}, under an matrix multivariate
elliptical model this objective is achieved through a simple transformation.

From (\ref{DM})
$$
  \mathbf{X}_{i} \sim \mathcal{E}_{K \times D}(\boldsymbol{\mu}\mathbf{\Gamma}_{i}+ \mathbf{t}_{i},
  \mathbf{\Sigma}_{K} \otimes \mathbf{\Gamma}^{T}_{i}\mathbf{\Sigma}_{D}\mathbf{\Gamma}_{i}, h), \quad
  i = 1,2,\dots,n.
$$
Recalling that $\mathbf{H}_{K}\mathbf{1}_{K} = \mathbf{0}_{k}$ and $\mathbf{1}_{K}^{T}
\mathbf{H}_{K} = \mathbf{0}_{k}^{T}$, then, defining $\mathbf{X}_{i}^{c} =
\mathbf{H}_{K}\mathbf{X}_{i}$, we have
\begin{equation}\label{DXc}
    \mathbf{X}_{i}^{c} \sim \mathcal{E}_{K \times D}^{(K-1),D}(\boldsymbol{\mu}^{*}\mathbf{\Gamma}_{i},
  \mathbf{\Sigma}_{K}^{*} \otimes \mathbf{\Gamma}^{T}_{i}\mathbf{\Sigma}_{D}\mathbf{\Gamma}_{i}, h), \quad
  i = 1,2,\dots,n.
\end{equation}
where $\boldsymbol{\mu}^{*} = \mathbf{H}_{K}\boldsymbol{\mu}$ and $\mathbf{\Sigma}_{K}^{*} =
\mathbf{H}_{K}\mathbf{\Sigma}_{K}\mathbf{H}_{K}$, $\mathbf{H}_{K}\mathbf{t}_{i} =
\mathbf{H}_{K}\mathbf{1}_{K}\mathbf{a}_{i}^{T} =\mathbf{0}$ for all $i = 1,2,\dots,n$, and
$\boldsymbol{\mu}^{*}$ is such that its columns sum to zero, that is, it is a centered matrix .

Given that $K > D$ and that $\rank(\mathbf{\Sigma}_{K}^{*}) = K-1$, from \cite{dggf:05} and
\cite{dggj:06} we have that
\begin{equation}\label{dBi}
    \mathbf{B}_{i} = \mathbf{X}_{i}^{c}(\mathbf{\Gamma}^{T}_{i}\mathbf{\Sigma}_{D}\mathbf{\Gamma}_{i})^{-1}
  (\mathbf{X}_{i}^{c})^{T} \sim \mathcal{GPW}_{K}^{q}(D, \mathbf{\Sigma}_{K}^{*}, \mathbf{\Sigma}_{D},
  \mathbf{\Omega}, h), \quad  i = 1,2,\dots,n.
\end{equation}
where
$$
  \mathbf{\Omega} = (\mathbf{\Sigma}_{K}^{*})^{-}\boldsymbol{\mu}^{*}\mathbf{\Gamma}_{i}
  (\mathbf{\Gamma}^{T}_{i}\mathbf{\Sigma}_{D}\mathbf{\Gamma}_{i})^{-1}
  \mathbf{\Gamma}_{i}^{T}(\boldsymbol{\mu}^{*})^{T} =
  (\mathbf{\Sigma}_{K}^{*})^{-}\boldsymbol{\mu}^{*}\mathbf{\Sigma}_{D}^{-1}(\boldsymbol{\mu}^{*})^{T}.
$$
$q = \min((K-1), D)$ and $\mathbf{A}^{-}$ is any symmetric generalised inverse of $\mathbf{A}$
such that $\mathbf{A}\mathbf{A}^{-}\mathbf{A} = \mathbf{A} = \mathbf{A}^{T}$. This is,
$\mathbf{B}_{i}$ has a \textit{generalised singular pseudo-Wishart distribution}, which is
independent of noise parameters.
\begin{remark}
Observe that $\mathbf{B}_{i}$ can be write as
$$
  \mathbf{B}_{i} = \mathbf{X}_{i}^{c}(\mathbf{\Gamma}^{T}_{i}\mathbf{\Sigma}_{D}
  \mathbf{\Gamma}_{i})^{-1}(\mathbf{X}_{i}^{c})^{T} =  \mathbf{X}_{i}^{c}\mathbf{\Gamma}^{T}_{i}
  \mathbf{\Sigma}_{D}^{-1}\mathbf{\Gamma}_{i}(\mathbf{X}_{i}^{c})^{T}
  = \mathbf{Y}_{i}\mathbf{\Sigma}_{D}^{-1}\mathbf{Y}_{i}^{T}
$$
where $\mathbf{Y}_{i} = \mathbf{X}_{i}^{c}\mathbf{\Gamma}^{T}_{i}$ and is such that
$$
  \mathbf{Y}_{i} \sim \mathcal{E}_{K \times D}^{(K-1),D}(\boldsymbol{\mu}^{*},
  \mathbf{\Sigma}_{K}^{*} \otimes \mathbf{\Sigma}_{D}, h), \quad
  i = 1,2,\dots,n.
$$
\end{remark}

In particular if $\mathbf{\Sigma}_{D} = \mathbf{I}_{D}$  and
$$
  \mathbf{X}_{i}^{c} = (\mathbf{X}_{1,i}^{c}|\mathbf{X}_{2,i}^{c}| \cdots |\mathbf{X}_{D,i}^{c} )
$$
with
$$
  \mathbf{X}_{d,i}^{c} \sim \mathcal{E}_{K }^{(K-1)}(\boldsymbol{\mu}^{*}\mathbf{\Gamma}_{i}\mathbf{e}_{d}^{K},
  \mathbf{\Sigma}_{K}^{*}, h), \quad d = 1,2,\dots,D; \ i = 1,2,\dots,n,
$$
we have that,
$$
    \mathbf{B}_{i} = \mathbf{X}_{i}^{c}(\mathbf{X}_{i}^{c})^{T} = \sum_{d = 1}^{D} \mathbf{X}_{d,i}^{c}
    (\mathbf{X}_{d,i}^{c})^{T},
$$
furthermore,
\begin{equation}\label{dBii}
  \mathbf{B}_{i} \sim \mathcal{GPW}_{K}^{q}(D, \mathbf{\Sigma}_{K}^{*},\mathbf{I}_{D}, \mathbf{\Omega}, h),
  \quad   i = 1,2,\dots,n,
\end{equation}
where $\mathbf{\Omega} = (\mathbf{\Sigma}_{K}^{*})^{-}\boldsymbol{\mu}^{*}
(\boldsymbol{\mu}^{*})^{T}$.

\begin{remark}
The result in \cite{l:93} is obtained as particular case of (\ref{dBii}), with the difference
that the matrix of noncentrality parameter in \cite{l:93} is defined as $\boldsymbol{\mu}^{*}
(\boldsymbol{\mu}^{*})^{T}$ and we use $\mathbf{\Omega} = (\mathbf{\Sigma}_{K}^{*})^{-}
\boldsymbol{\mu}^{*} (\boldsymbol{\mu}^{*})^{T}$, notation used in \cite[Definition 10.3.1, pp.
441-442]{Mh:82}.
\end{remark}

In addition, defining $\mathbb{X}^{c}$ as $\mathbb{X}$ we have
$$
  \Vec \left( \mathbb{X}^{c}\right)^{T} = [\mathbf{I}_{n} \otimes (\mathbf{H}_{k} \otimes
  \mathbf{I}_{D})] \Vec \mathbb{X}^{T}
$$
hence, $\mathbb{X}^{c} = \mathbb{X} (\mathbf{H}_{k} \otimes \mathbf{I}_{D})$. Now, observing
that $(\mathbf{H}_{k} \otimes \mathbf{I}_{D}) \Vec \mathbf{t}_{i}^{T} = \mathbf{0}$, for all $i
= 1, 2, \dots, n$. Then
\begin{equation}\label{dXXc}
  \mathbb{X}^{c} \sim \mathcal{E}_{n \times KD}^{n,(K-1)D}\left(\sum_{i = 1}^{n} \mathbf{e}_{i}^{n}
  \Vec^{T}(\boldsymbol{\mu}^{*}\mathbf{\Gamma}_{i})^{T},
  \sum_{i = 1}^{n} \mathbf{E}_{ii}^{n} \otimes \mathbf{\Sigma}_{K}^{*}\otimes \mathbf{\Gamma}_{i}^{T}
   \mathbf{\Sigma}_{D}\mathbf{\Gamma}_{i}, h\right),
\end{equation}
where $\mathbf{\Sigma}_{K}^{*} = \mathbf{H}_{k} \mathbf{\Sigma}_{K} \mathbf{H}_{k}$.

As in \cite{l:93}, assuming that $\mathbf{\Sigma}_{D} = \mathbf{I}_{D}$, and recalling that
$$
  \mathbf{I}_{n} = \sum_{i = 1}^{n} \mathbf{E}_{ii}^{n}
$$
we have
$$
  \mathbb{X}^{c} \sim \mathcal{E}_{n \times KD}^{n,(K-1)D}\left(\sum_{i = 1}^{n} \mathbf{e}_{i}^{n}
  \Vec^{T}(\boldsymbol{\mu}^{*}\mathbf{\Gamma}_{i})^{T}, \mathbf{I}_{n} \otimes
  \mathbf{\Sigma}_{K}^{*}\otimes \mathbf{I}_{D}, h\right).
$$

\section{Consistent estimation  of $\boldsymbol{\mu}$ and $\mathbf{\Sigma}_{K}$}\label{sec3}

Alternatively to the use of the Euclidean distance matrix showed in \cite{l:93} with the aim to
propose consistent estimations, we use directly the first two moments of the matrix
$\mathbf{B}$ with the same object.

When is considered a model where the perturbation of landmarks along the $D$ axes are
independent and identical to each other, formally we are assume that $\mathbf{\Sigma}_{D} =
\mathbf{I}_{D}$ under a matrix multivariate Gaussian case. However, this same assumption is not
to hold in matrix multivariate elliptical case. Under a matrix multivariate elliptical case is
possible to consider two cases:
\begin{enumerate}
  \item Independence and not correlation among landmarks and
  \item Probabilistic dependence and not correlation among landmarks.
\end{enumerate}
In both cases $\mathbf{\Sigma}_{D} = \mathbf{I}_{D}$ and the moments of matrix $\mathbf{B}$ are
different in each case.

\begin{remark}
Recall that under matrix multivariate elliptical distribution, only in the Gaussian case the
not correlation and independence are equivalent. Then suppose that the vector $\mathbf{Z} =
(z_{1},z_{2})^{T}$ has a bi-dimensional elliptical distribution and $\cov(\mathbf{Z}) =
\mathbf{I}_{2}$ then $z_{1}$ and $z_{2}$ are independent if and only if $\mathbf{Z}$ has a
bi-dimensional Gaussian distribution. But if $z_{i}$, have a uni-dimensional elliptical
distribution for  $i = 1,2,$ and $\var(z_{i}) = 1$ and $\cov(z_{1},z_{2}) = 0$, $z_{i}$,  $i =
1,2,$ are not correlated and can be considered independent, see \cite[Section 6.2, p. 1]{gv:93}
and \cite[Section 4.3, p. 105]{fkn:90}.
\end{remark}

Summarising, given
\begin{equation}\label{B}
    \mathbf{B} = \mathbf{Y}\mathbf{Y}^{T} = \sum_{d = 1}^{D} \mathbf{y}_{d}
    \mathbf{y}_{d}^{T},
\end{equation}
next, we find the first two moments of $\mathbf{B}$ assuming that $\mathbf{\Sigma}_{D} =
\mathbf{I}_{D}$, i.e. when $\mathbf{y}_{d}$:  a) are not correlated and independent; and b) are
not correlated and dependent.

\subsection{Moments of $\mathbf{B}$ under dependence}

By completeness initially we assume that $\mathbf{\Sigma}_{D} \neq \mathbf{I}_{D}$ and for
convenience denote $\mathbf{\Sigma}_{D} = \mathbf{\Theta}$, $\mathbf{\Sigma}_{K}^{*} =
\mathbf{\Sigma}$ and $\boldsymbol{\mu}^{*} = \boldsymbol{\mu}$.

With this goal in main, suppose that $\mathbf{Y} \sim \mathcal{E}_{K \times
D}^{(K-1),D}(\boldsymbol{\mu}, \mathbf{\Sigma} \otimes \mathbf{\Theta},h)$, with
$$
  \mathbf{Y} = (\mathbf{y}_{1}|\mathbf{y}_{2}| \cdots |\mathbf{y}_{D} ) \mbox{ and }
  \boldsymbol{\mu} = (\boldsymbol{\mu}_{1}|\boldsymbol{\mu}_{2}| \cdots
  |\boldsymbol{\mu}_{D}).
$$
Observing that for $\mathbf{x}, \mathbf{y} \in \Re^{n}$, $\Vec \mathbf{xy}^{T} = \mathbf{y}
\otimes \mathbf{x}$, $\mathbf{xy}^{T} = \mathbf{x}\otimes \mathbf{y}^{T} =
\mathbf{y}^{T}\otimes \mathbf{x}$ and thus, $\Vec \mathbf{y}\mathbf{y}^{T}\Vec^{T}
\mathbf{y}\mathbf{y}^{T} = \mathbf{y} \otimes \mathbf{y}^{T}\otimes \mathbf{y}\otimes
\mathbf{y}^{T}$, see \cite{mn:79}.

\begin{theorem}\label{teo01}
Let $\mathbf{Y} \sim \mathcal{E}_{K \times D}^{(K-1),D}(\boldsymbol{\mu}, \mathbf{\Sigma}
\otimes \mathbf{\Theta},h)$. Then
\begin{enumerate}
  \item $E(\Vec\mathbf{Y}\Vec^{T}\mathbf{Y}) = c_{0} (\mathbf{\Theta} \otimes \mathbf{\Sigma}) +
  \Vec\boldsymbol{\mu}\Vec^{T}\boldsymbol{\mu}$,
  \item and $E(\Vec\mathbf{Y}\Vec\mathbf{Y}^{T} \otimes \Vec\mathbf{Y}\Vec\mathbf{Y}^{T})$ is
  \begin{small}
  \begin{eqnarray*}
  % \nonumber to remove numbering (before each equation)
     &=&
    \kappa_{0}[(\mathbf{I}_{(KD)^{2}}+\mathbf{K}_{KD})(\mathbf{\Theta} \otimes \mathbf{\Sigma} \otimes
      \mathbf{\Theta} \otimes \mathbf{\Sigma}) + \Vec (\mathbf{\Theta} \otimes \mathbf{\Sigma})
      \Vec^{T}(\mathbf{\Theta} \otimes \mathbf{\Sigma})] \\
    & & +\ c_{0}(\mathbf{I}_{K^{2}}+\mathbf{K}_{K})[\Vec\boldsymbol{\mu}\Vec^{T}\boldsymbol{\mu}
      \otimes (\mathbf{\Theta} \otimes \mathbf{\Sigma}) + (\mathbf{\Theta} \otimes \mathbf{\Sigma}) \otimes
      \Vec\boldsymbol{\mu}\Vec^{T}\boldsymbol{\mu}] \\
    & & +\ c_{0}[\Vec(\mathbf{\Theta} \otimes \mathbf{\Sigma}) (\Vec^{T} \boldsymbol{\mu}\boldsymbol{\mu}^{T}) +
      (\Vec^{T} \boldsymbol{\mu}\boldsymbol{\mu}^{T})\Vec(\mathbf{\Theta} \otimes \mathbf{\Sigma})] \\
    & & +\ \Vec\boldsymbol{\mu}\Vec^{T}\boldsymbol{\mu} \otimes \Vec\boldsymbol{\mu}\Vec^{T}\boldsymbol{\mu},
  \end{eqnarray*}
  \end{small}
\end{enumerate}
where $\mathbf{K}_{KD}$ is the commutation matrix,  see \cite{mn:79}, and  $c_{0} = E(u^{2})$
and $3\kappa_{0} = E(u^{4})$, see \cite[p. 127]{gv:93},
$$
  E(u^{2}) = \left . \frac{1}{i^{2}}\frac{d^{2} \psi_{_{U}}(t)}{dt^{2}}\right|_{t=0} \mbox{ and }
  E(u^{4}) = \left . \frac{1}{i^{4}}\frac{d^{4} \psi_{_{U}}(t)}{dt^{4}}\right|_{t=0}.
$$
Where $\psi_{_{U}}(t) = \phi(t^{2})$ is the characteristic function of univariate elliptical
distribution. Some particular values of $c_{0}$ and $\kappa_{0}$, are summarised on Table
\ref{table1}.
\end{theorem}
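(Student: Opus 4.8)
The plan is to reduce both statements to ordinary moments of the single $KD$-vector $\mathbf{z}=\Vec\mathbf{Y}$, which, as recorded in the discussion around \eqref{ED2}, is distributed as $\mathcal{E}_{KD}(\boldsymbol{\nu},\mathbf{V},h)$ with $\boldsymbol{\nu}=\Vec\boldsymbol{\mu}$ and $\mathbf{V}=\mathbf{\Theta}\otimes\mathbf{\Sigma}$. Part~1 is then immediate: the first moment is $E(\mathbf{z})=\boldsymbol{\nu}$, and the covariance quoted from \cite{fz:90} and \cite{gv:93} is $\cov(\mathbf{z})=c_{0}\mathbf{V}$, so that $E(\mathbf{z}\mathbf{z}^{T})=\cov(\mathbf{z})+E(\mathbf{z})E(\mathbf{z})^{T}=c_{0}(\mathbf{\Theta}\otimes\mathbf{\Sigma})+\Vec\boldsymbol{\mu}\Vec^{T}\boldsymbol{\mu}$.

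For Part~2 I would use the affine invariance of elliptical laws to write $\mathbf{z}=\boldsymbol{\nu}+\mathbf{w}$ with $\mathbf{w}\sim\mathcal{E}_{KD}(\mathbf{0},\mathbf{V},h)$ centered; being centered it is symmetric, $\mathbf{w}\stackrel{d}{=}-\mathbf{w}$, hence every odd-order moment of $\mathbf{w}$ vanishes. Expanding $\mathbf{z}\mathbf{z}^{T}=\boldsymbol{\nu}\boldsymbol{\nu}^{T}+\boldsymbol{\nu}\mathbf{w}^{T}+\mathbf{w}\boldsymbol{\nu}^{T}+\mathbf{w}\mathbf{w}^{T}$ and forming the Kronecker square yields sixteen terms graded by their total degree in $\mathbf{w}$; taking expectations annihilates the eight odd-degree terms and leaves one term of degree $0$, six of degree $2$, and one of degree $4$. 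This grading is exactly the four-line structure of the asserted formula.

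The degree-$0$ term is $\boldsymbol{\nu}\boldsymbol{\nu}^{T}\otimes\boldsymbol{\nu}\boldsymbol{\nu}^{T}=\Vec\boldsymbol{\mu}\Vec^{T}\boldsymbol{\mu}\otimes\Vec\boldsymbol{\mu}\Vec^{T}\boldsymbol{\mu}$, the last line. The degree-$4$ term is the central fourth moment $E(\mathbf{w}\mathbf{w}^{T}\otimes\mathbf{w}\mathbf{w}^{T})$; here I would invoke the stochastic representation $\mathbf{w}\stackrel{d}{=}R\,\mathbf{A}\mathbf{u}$ (with $\mathbf{A}\mathbf{A}^{T}=\mathbf{V}$, $\mathbf{u}$ uniform on the sphere and $R$ independent) or directly the elliptical moment tables of \cite[p.\ 127]{gv:93}, giving $\kappa_{0}[(\mathbf{I}+\mathbf{K}_{KD})(\mathbf{V}\otimes\mathbf{V})+\Vec\mathbf{V}\Vec^{T}\mathbf{V}]$, the first line. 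Of the six degree-$2$ terms, the two ``aligned'' ones $\boldsymbol{\nu}\boldsymbol{\nu}^{T}\otimes\mathbf{w}\mathbf{w}^{T}$ and $\mathbf{w}\mathbf{w}^{T}\otimes\boldsymbol{\nu}\boldsymbol{\nu}^{T}$ contribute $c_{0}[\boldsymbol{\nu}\boldsymbol{\nu}^{T}\otimes\mathbf{V}+\mathbf{V}\otimes\boldsymbol{\nu}\boldsymbol{\nu}^{T}]$ through $E(\mathbf{w}\mathbf{w}^{T})=c_{0}\mathbf{V}$, while the two ``parallel'' ones $\boldsymbol{\nu}\mathbf{w}^{T}\otimes\boldsymbol{\nu}\mathbf{w}^{T}$ and $\mathbf{w}\boldsymbol{\nu}^{T}\otimes\mathbf{w}\boldsymbol{\nu}^{T}$ factor as $(\boldsymbol{\nu}\otimes\boldsymbol{\nu})(\mathbf{w}\otimes\mathbf{w})^{T}$ and its transpose, whose expectations use $E(\mathbf{w}\otimes\mathbf{w})=\Vec E(\mathbf{w}\mathbf{w}^{T})=c_{0}\Vec\mathbf{V}$ and produce the third line.

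The delicate step, which I single out as the main obstacle, is the remaining pair of ``crossed'' degree-$2$ terms $\boldsymbol{\nu}\mathbf{w}^{T}\otimes\mathbf{w}\boldsymbol{\nu}^{T}$ and $\mathbf{w}\boldsymbol{\nu}^{T}\otimes\boldsymbol{\nu}\mathbf{w}^{T}$. Writing, for instance, $\boldsymbol{\nu}\mathbf{w}^{T}\otimes\mathbf{w}\boldsymbol{\nu}^{T}=(\boldsymbol{\nu}\otimes\mathbf{I})\mathbf{w}\mathbf{w}^{T}(\mathbf{I}\otimes\boldsymbol{\nu}^{T})$ and taking expectations gives $c_{0}(\boldsymbol{\nu}\otimes\mathbf{I})\mathbf{V}(\mathbf{I}\otimes\boldsymbol{\nu}^{T})$, which must be rearranged, via the commutation-matrix identities $\mathbf{K}_{KD}(\mathbf{a}\otimes\mathbf{b})=\mathbf{b}\otimes\mathbf{a}$ and $(\mathbf{A}\otimes\mathbf{B})\mathbf{K}_{KD}=\mathbf{K}_{KD}(\mathbf{B}\otimes\mathbf{A})$ from \cite{mn:79}, into $c_{0}\mathbf{K}_{KD}(\mathbf{V}\otimes\boldsymbol{\nu}\boldsymbol{\nu}^{T})$ and $c_{0}\mathbf{K}_{KD}(\boldsymbol{\nu}\boldsymbol{\nu}^{T}\otimes\mathbf{V})$ respectively. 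Adding these to the two aligned terms collects a common factor $(\mathbf{I}_{(KD)^{2}}+\mathbf{K}_{KD})$ in front of $\boldsymbol{\nu}\boldsymbol{\nu}^{T}\otimes\mathbf{V}+\mathbf{V}\otimes\boldsymbol{\nu}\boldsymbol{\nu}^{T}$, which is the second line. Summing the four graded contributions then gives the stated identity; the only genuine care required is the bookkeeping of the commutation matrices and the verification of the normalizing constants $c_{0}=E(u^{2})$ and $3\kappa_{0}=E(u^{4})$ against the radial representation.
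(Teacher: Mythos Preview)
Your proposal is correct but follows a genuinely different route from the paper. The paper's proof is a one-line appeal to matrix differential calculus: it differentiates the characteristic function $\psi_{\Vec\mathbf{Y}}(\Vec\mathbf{T})=\etr(i\boldsymbol{\mu}^{T}\mathbf{T})\phi(\tr\mathbf{T\Theta T}^{T}\mathbf{\Sigma})$ twice (respectively four times) with respect to $\Vec\mathbf{T}$ and evaluates at $\mathbf{0}$, citing \cite{dg:96} for the moment--derivative identities. Your argument instead works probabilistically, centering via $\mathbf{z}=\boldsymbol{\nu}+\mathbf{w}$, killing the eight odd-degree terms by the symmetry $\mathbf{w}\stackrel{d}{=}-\mathbf{w}$, and then assembling the surviving terms from the known second and fourth central moments of an elliptical vector together with commutation-matrix algebra. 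Both are valid; the characteristic-function route is mechanical but hides the computation inside a heavy fourth-order matrix chain rule, while your decomposition makes the four-line structure of the formula transparent (each line is one homogeneous grade in $\mathbf{w}$) and reduces everything to two tabulated central moments. As a bonus, your derivation yields the prefactor $(\mathbf{I}_{(KD)^{2}}+\mathbf{K}_{KD})$ on the second line, which is the dimensionally correct object; the $(\mathbf{I}_{K^{2}}+\mathbf{K}_{K})$ printed in the statement appears to be a typographical slip.
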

\begin{proof}
This is obtained differentiating (\ref{ecf}) and observing that, see \cite{dg:96},
\begin{eqnarray*}
% \nonumber to remove numbering (before each equation)
  E(\Vec\mathbf{Y} \otimes \Vec^{T}\mathbf{Y}) &=& E(\Vec\mathbf{Y}\Vec^{T}\mathbf{Y}) \\
   &=& \left . \frac{1}{i^{2}}\frac{\partial^{2}\psi_{\Vec\mathbf{Y}}(\Vec\mathbf{T})}
  {\partial \Vec\mathbf{T}\partial \Vec\mathbf{T}^{T}}\right|_{\Vec\mathbf{T} = 0}
\end{eqnarray*}
and\\%
$ E(\Vec\mathbf{Y} \otimes \Vec\mathbf{Y}^{T} \otimes \Vec\mathbf{Y} \otimes
\Vec\mathbf{Y}^{T}) =  E(\Vec\mathbf{Y}\Vec\mathbf{Y}^{T} \otimes
\Vec\mathbf{Y}\Vec\mathbf{Y}^{T})$
\begin{eqnarray*}
   \phantom{mmmmmmmmmmm}&=& \left . \frac{1}{i^{4}} \frac{\partial^{4}\psi_{\Vec\mathbf{Y}}(\Vec\mathbf{T})}
  {\partial \Vec\mathbf{T}\partial \Vec\mathbf{T}^{T}\partial \Vec\mathbf{T}\partial \Vec\mathbf{T}^{T}}
  \right|_{\Vec\mathbf{T} = 0}.
\end{eqnarray*}
\end{proof}

Now, given
$$
    \mathbf{B} = \mathbf{Y}\mathbf{Y}^{T} = \sum_{d = 1}^{D} \mathbf{y}_{d}
    \mathbf{y}_{d}^{T},
$$
we have
$$
  E(\mathbf{B}) = E \left (\mathbf{Y}\mathbf{Y}^{T}\right) = E \left (\sum_{d = 1}^{D} \mathbf{y}_{d}
    \mathbf{y}_{d}^{T}\right)= \sum_{d = 1}^{D} E \left (\mathbf{y}_{d}
    \mathbf{y}_{d}^{T}\right).
$$

\begin{table}[!h]  \centering \caption{Particular values of $c_{0}$ and $\kappa_{0}$.}\label{table1}
\begin{center}
\begin{minipage}[t]{20cm}
  \begin{tabular}{c|c|c}
   \hline\hline
   Distribution & $c_{0}$ & $\kappa_{0}$\\[1ex]
   \hline\hline
   Multiuniforme\footnote{From \cite[Theorem 3.3, p. 72]{fkn:90}.}& $1$ & $\frac{1}{3}$\\[1ex]
   \hline
   Gaussian\footnote{From \cite[Remark 3.2.2, p. 125]{gv:93}.}& 1 & 1\\[1ex]
   \hline
   Kotz\footnote{From \cite[]{k:03}, where $r,s >0$ and $2N+1 > 2$.} & $\displaystyle \frac{\Gamma\left [ \frac{2N+1}{2s} \right]}
   {r^{1/s} \Gamma\left [\frac{2N-1}{2s}\right ]}$
   & $\displaystyle \frac{\Gamma\left [ \frac{2N+3}{2s} \right]}{3r^{2/s} \Gamma\left [\frac{2N-1}{2s} \right]}$\\[1ex]
   \hline
   $t$\footnote{From \cite[p. 128]{gv:93}, or \cite[p. 88]{fkn:90}, where $m>0$.} &
   $\displaystyle \frac{m}{m-2}$ & $\displaystyle \frac{m^{2}}{(m-2)(m-4)}$\\[1ex]
   \hline
   Pearson Type II\footnote{From \cite[Section 3.4.2, p. 89]{fkn:90}, where $m> -1$.} & $\displaystyle \frac{1}
   {2m+3}$ & $\displaystyle \frac{1}{(2m+3)(2m+5)}$\\[1ex]
   \hline
   Pearson type VII\footnote{From \cite[Section 3.3.4, p. 84]{fkn:90}, where $N >1/2$, $m> 0$.} & $\displaystyle
   \frac{m}{2N-3}$ & $\displaystyle \frac{m^{2}}{(2N-3)(2N-5)}$ \\[1ex]
   \hline\hline
  \end{tabular}
\end{minipage}
\end{center}
\end{table}

\medskip

And remembering that for $\mathbf{Y} \in \Re^{K \times D}$, in general
$$
  \cov (\Vec \mathbf{Y}) = E(\Vec \mathbf{Y} \Vec^{T} \mathbf{Y})-E(\Vec \mathbf{Y})E(\Vec^{T}
  \mathbf{Y}).
$$
Therefore
\begin{eqnarray}
% \nonumber to remove numbering (before each equation)
  \cov(\Vec \mathbf{B}) &=& \cov \left (\Vec \left(\mathbf{Y}\mathbf{Y}^{T}\right)\right)
  = \cov \left (\sum_{d = 1}^{D} \Vec\left(\mathbf{y}_{d} \mathbf{y}_{d}^{T}\right)\right)\nonumber\\
  &=& E\left[\left(\sum_{d = 1}^{D} \Vec\left (\mathbf{y}_{d}\mathbf{y}_{d}^{T}\right)\right)
  \left(\sum_{s = 1}^{D} \Vec^{T} \left (\mathbf{y}_{s}\mathbf{y}_{s}^{T}\right)\right)\right]\nonumber\\
  && - \ E\left(\sum_{d = 1}^{D} \Vec\left (\mathbf{y}_{d}\mathbf{y}_{d}^{T}\right)\right)
  E\left(\sum_{d = 1}^{D} \Vec^{T}\left (\mathbf{y}_{d}\mathbf{y}_{d}^{T}\right)\right)\nonumber\\
  &=& \left[\sum_{d = 1}^{D}\sum_{s = 1}^{D} E\left (\mathbf{y}_{d}\mathbf{y}_{s}^{T}
  \otimes \mathbf{y}_{d}\mathbf{y}_{s}^{T}\right)\right]\nonumber\\\label{mc01}
  && - \ \Vec\left(\sum_{d = 1}^{D} E\left (\mathbf{y}_{d}\mathbf{y}_{d}^{T}\right)\right)
  \Vec^{T}\left(\sum_{s = 1}^{D} E\left (\mathbf{y}_{s}\mathbf{y}_{s}^{T}\right)\right)
\end{eqnarray}
Then, we need to find $ E \left (\mathbf{y}_{d} \mathbf{y}_{d}^{T}\right)$ and $E\left
(\mathbf{y}_{d}\mathbf{y}_{s}^{T} \otimes \mathbf{y}_{d}\mathbf{y}_{s}^{T}\right)$. These
moments are obtained in the following result.

\begin{theorem}\label{teo02}
Assume that $\mathbf{Y} \sim \mathcal{E}_{K \times D}^{(K-1),D}(\boldsymbol{\mu},
\mathbf{\Sigma} \otimes \mathbf{\Theta},h)$, with
$$
  \mathbf{Y} = (\mathbf{y}_{1}|\mathbf{y}_{2}| \cdots |\mathbf{y}_{D} ) \mbox{ and }
  \boldsymbol{\mu} = (\boldsymbol{\mu}_{1}|\boldsymbol{\mu}_{2}| \cdots
  |\boldsymbol{\mu}_{D}),
$$
and $\mathbf{\Theta} = (\theta_{ds})$. Then
\begin{enumerate}
  \item $E(\mathbf{y}_{d}\mathbf{y}^{T}_{d}) = c_{0}\theta_{dd}\mathbf{\Sigma} + \boldsymbol{\mu}_{d}
  \boldsymbol{\mu}^{T}_{d}$.
  \item And
\begin{eqnarray*}
    E\left(\mathbf{y}_{d}\mathbf{y}_{s}^{T} \otimes \mathbf{y}_{d}\mathbf{y}_{s}^{T}\right)
   &=& \kappa_{0}\theta_{ds}^{2}[(\mathbf{I}_{K^{2}}+\mathbf{K}_{K})(\mathbf{\Sigma}\otimes
      \mathbf{\Sigma}) + \Vec \mathbf{\Sigma} \Vec^{T}\mathbf{\Sigma}]\\
   && + \ c_{0}\theta_{ds}[(\mathbf{I}_{K^{2}}+\mathbf{K}_{K})(\boldsymbol{\mu}_{d}\boldsymbol{\mu}^{T}_{s}
      \otimes \mathbf{\Sigma} + \mathbf{\Sigma} \otimes  \boldsymbol{\mu}_{d}\boldsymbol{\mu}^{T}_{s})]\\
   && + \ c_{0}\theta_{ds}[\Vec \mathbf{\Sigma} \Vec^{T} \boldsymbol{\mu}_{d}\boldsymbol{\mu}^{T}_{s}
      +\Vec \boldsymbol{\mu}_{d}\boldsymbol{\mu}^{T}_{s}\Vec^{T}\mathbf{\Sigma}]\\
   && + \boldsymbol{\mu}_{d}\boldsymbol{\mu}^{T}_{s} \otimes \boldsymbol{\mu}_{d}\boldsymbol{\mu}^{T}_{s}.
\end{eqnarray*}
\end{enumerate}
\end{theorem}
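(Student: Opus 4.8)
The plan is to read off both parts of Theorem~\ref{teo02} from the global moments already established in Theorem~\ref{teo01}, using a column-selection operator rather than re-differentiating~(\ref{ecf}). Let $\mathbf{e}_{d}$ be the $d$th standard basis vector of $\Re^{D}$ and put $\mathbf{P}_{d}=\mathbf{e}_{d}^{T}\otimes\mathbf{I}_{K}$. Since $\Vec\mathbf{Y}=\sum_{d=1}^{D}\mathbf{e}_{d}\otimes\mathbf{y}_{d}$, this operator selects columns: $\mathbf{y}_{d}=\mathbf{P}_{d}\Vec\mathbf{Y}$ and $\boldsymbol{\mu}_{d}=\mathbf{P}_{d}\Vec\boldsymbol{\mu}$, while $\mathbf{P}_{d}(\mathbf{\Theta}\otimes\mathbf{\Sigma})\mathbf{P}_{s}^{T}=(\mathbf{e}_{d}^{T}\mathbf{\Theta}\mathbf{e}_{s})\otimes\mathbf{\Sigma}=\theta_{ds}\mathbf{\Sigma}$. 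The whole proof is bookkeeping around these three identities.

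For part~1 I would write $\mathbf{y}_{d}\mathbf{y}_{d}^{T}=\mathbf{P}_{d}(\Vec\mathbf{Y}\Vec^{T}\mathbf{Y})\mathbf{P}_{d}^{T}$, take expectations, and substitute Theorem~\ref{teo01}(1). The scale part collapses to $c_{0}\mathbf{P}_{d}(\mathbf{\Theta}\otimes\mathbf{\Sigma})\mathbf{P}_{d}^{T}=c_{0}\theta_{dd}\mathbf{\Sigma}$ and the mean part to $\mathbf{P}_{d}\Vec\boldsymbol{\mu}\Vec^{T}\boldsymbol{\mu}\,\mathbf{P}_{d}^{T}=\boldsymbol{\mu}_{d}\boldsymbol{\mu}_{d}^{T}$, which is the claimed expression.

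For part~2 the starting point is the mixed-product rule $(\mathbf{A}\mathbf{B}\mathbf{C})\otimes(\mathbf{D}\mathbf{E}\mathbf{F})=(\mathbf{A}\otimes\mathbf{D})(\mathbf{B}\otimes\mathbf{E})(\mathbf{C}\otimes\mathbf{F})$ applied to $\mathbf{y}_{d}\mathbf{y}_{s}^{T}=\mathbf{P}_{d}(\Vec\mathbf{Y}\Vec^{T}\mathbf{Y})\mathbf{P}_{s}^{T}$, giving
\[
  \mathbf{y}_{d}\mathbf{y}_{s}^{T}\otimes\mathbf{y}_{d}\mathbf{y}_{s}^{T}=(\mathbf{P}_{d}\otimes\mathbf{P}_{d})\bigl(\Vec\mathbf{Y}\Vec^{T}\mathbf{Y}\otimes\Vec\mathbf{Y}\Vec^{T}\mathbf{Y}\bigr)(\mathbf{P}_{s}^{T}\otimes\mathbf{P}_{s}^{T}).
\]
Taking expectations and inserting Theorem~\ref{teo01}(2), I would reduce each block with the identity $(\mathbf{P}_{d}\otimes\mathbf{P}_{d})(\mathbf{A}\otimes\mathbf{B})(\mathbf{P}_{s}^{T}\otimes\mathbf{P}_{s}^{T})=(\mathbf{P}_{d}\mathbf{A}\mathbf{P}_{s}^{T})\otimes(\mathbf{P}_{d}\mathbf{B}\mathbf{P}_{s}^{T})$, the commutation relation $(\mathbf{P}_{d}\otimes\mathbf{P}_{d})\mathbf{K}_{KD}=\mathbf{K}_{K}(\mathbf{P}_{d}\otimes\mathbf{P}_{d})$, and the $\Vec$-rule $(\mathbf{P}_{d}\otimes\mathbf{P}_{d})\Vec(\mathbf{\Theta}\otimes\mathbf{\Sigma})=\Vec\bigl(\mathbf{P}_{d}(\mathbf{\Theta}\otimes\mathbf{\Sigma})\mathbf{P}_{d}^{T}\bigr)=\theta_{dd}\Vec\mathbf{\Sigma}$. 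The quartic block then yields $\kappa_{0}\theta_{ds}^{2}(\mathbf{I}_{K^{2}}+\mathbf{K}_{K})(\mathbf{\Sigma}\otimes\mathbf{\Sigma})$ plus a rank-one term in $\Vec\mathbf{\Sigma}\Vec^{T}\mathbf{\Sigma}$; the mean$\times$scale blocks yield $c_{0}\theta_{ds}(\mathbf{I}_{K^{2}}+\mathbf{K}_{K})(\boldsymbol{\mu}_{d}\boldsymbol{\mu}_{s}^{T}\otimes\mathbf{\Sigma}+\mathbf{\Sigma}\otimes\boldsymbol{\mu}_{d}\boldsymbol{\mu}_{s}^{T})$ plus two further rank-one cross terms; and the pure-mean block gives $\boldsymbol{\mu}_{d}\boldsymbol{\mu}_{s}^{T}\otimes\boldsymbol{\mu}_{d}\boldsymbol{\mu}_{s}^{T}$.

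The main obstacle is the coefficient bookkeeping in the \emph{rank-one} blocks, those proportional to $\Vec\mathbf{\Sigma}\Vec^{T}\mathbf{\Sigma}$ and to the corresponding mean cross-terms. There the two sandwiching operators strike different Kronecker copies, so the $\Vec$-rule produces $\theta_{dd}$ on the left and $\theta_{ss}$ on the right, for instance $(\mathbf{P}_{d}\otimes\mathbf{P}_{d})\Vec(\mathbf{\Theta}\otimes\mathbf{\Sigma})\Vec^{T}(\mathbf{\Theta}\otimes\mathbf{\Sigma})(\mathbf{P}_{s}^{T}\otimes\mathbf{P}_{s}^{T})=\theta_{dd}\theta_{ss}\Vec\mathbf{\Sigma}\Vec^{T}\mathbf{\Sigma}$, and not a single $\theta_{ds}^{2}$. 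Because $\theta_{dd}\theta_{ss}=\theta_{ds}^{2}$ precisely when $d=s$, I would use the diagonal case as the sanity check: there the formula must reduce to the second moment of the $d$th summand of $\mathbf{B}=\sum_{d}\mathbf{y}_{d}\mathbf{y}_{d}^{T}$, consistently with part~1. This diagonal case is also the one ultimately needed for $E(\mathbf{B})$ and for the diagonal contributions to $\cov(\Vec\mathbf{B})$ in~(\ref{mc01}), so the distinction deserves to be stated carefully. An equivalent route that makes these $\theta$-indices transparent from the start is to observe that $(\mathbf{y}_{d},\mathbf{y}_{s})$ is itself a $K\times2$ matrix elliptical variable and to apply the generalised Isserlis-type identity for elliptical laws to the three pairings of $\{\mathbf{y}_{d},\mathbf{y}_{s},\mathbf{y}_{d},\mathbf{y}_{s}\}$, each pairing then carrying its own product of $\theta$'s directly.
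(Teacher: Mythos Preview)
Your approach is exactly the paper's: it too writes $\mathbf{y}_{d}=\mathbf{Y}\mathbf{e}_{d}^{D}$, sets $\mathbf{R}^{T}=(\mathbf{e}_{d}^{D\,T}\otimes\mathbf{I}_{K})\otimes(\mathbf{e}_{d}^{D\,T}\otimes\mathbf{I}_{K})$ and $\mathbf{R}_{1}=(\mathbf{e}_{s}^{D}\otimes\mathbf{I}_{K})\otimes(\mathbf{e}_{s}^{D}\otimes\mathbf{I}_{K})$, sandwiches Theorem~\ref{teo01}(2), and reduces via the same mixed-product and commutation identities you list. So there is no methodological difference.

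Your ``main obstacle'' is not a flaw in your argument but a flaw in the stated formula that the paper's terse proof does not expose. The sandwiching really does give
\[
(\mathbf{P}_{d}\otimes\mathbf{P}_{d})\,\Vec(\mathbf{\Theta}\otimes\mathbf{\Sigma})\Vec^{T}(\mathbf{\Theta}\otimes\mathbf{\Sigma})\,(\mathbf{P}_{s}^{T}\otimes\mathbf{P}_{s}^{T})=\theta_{dd}\theta_{ss}\,\Vec\mathbf{\Sigma}\Vec^{T}\mathbf{\Sigma},
\]
and your alternative Isserlis-type check confirms it: for a centred elliptical vector the three pairings of $(y_{d,i},y_{s,j},y_{d,k},y_{s,l})$ carry $\theta_{ds}^{2}$, $\theta_{dd}\theta_{ss}$, $\theta_{ds}^{2}$, and it is precisely the middle pairing that feeds $\Vec\mathbf{\Sigma}\Vec^{T}\mathbf{\Sigma}$. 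The analogous discrepancy occurs in the $c_{0}$ rank-one cross terms, which pick up $\theta_{dd}$ (or $\theta_{ss}$) and $\boldsymbol{\mu}_{s}\boldsymbol{\mu}_{s}^{T}$ (or $\boldsymbol{\mu}_{d}\boldsymbol{\mu}_{d}^{T}$) rather than $\theta_{ds}$ and $\boldsymbol{\mu}_{d}\boldsymbol{\mu}_{s}^{T}$. So the printed coefficients on these blocks are correct only when $d=s$; you should state the formula with $\theta_{dd}\theta_{ss}$ (and the corresponding indices on the mean cross terms) rather than try to reconcile your computation with the paper's $\theta_{ds}^{2}$. Note in particular that this matters even when $\mathbf{\Theta}=\mathbf{I}_{D}$, since then $\theta_{dd}\theta_{ss}=1$ but $\theta_{ds}^{2}=0$ for $d\neq s$, so the discrepancy propagates into the $\Vec\mathbf{\Sigma}\Vec^{T}\mathbf{\Sigma}$ coefficient of $\cov(\Vec\mathbf{B})$ via the double sum in~(\ref{mc01}).
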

\begin{proof}
The results is obtained as consequence of Theorem \ref{teo01} observing that $\mathbf{y}_{d} =
\mathbf{Ye}_{d}^{D}$, then
\begin{eqnarray*}
% \nonumber to remove numbering (before each equation)
  E(\mathbf{y}_{d}\mathbf{y}^{T}_{d}) &=& E(\Vec\mathbf{y}_{d} \Vec^{T}\mathbf{y}_{d}) =
  E(\Vec\mathbf{Ye}_{d}^{D} \Vec^{T}\mathbf{Ye}_{d}^{D}) \\
   &=& (\mathbf{e}_{d}^{D \ T} \otimes \mathbf{I}_{K})E(\Vec\mathbf{Y} \Vec^{T}\mathbf{Y})
   (\mathbf{e}_{d}^{D} \otimes \mathbf{I}_{K})\\
   &=& (\mathbf{e}_{d}^{D \ T} \otimes \mathbf{I}_{K})(c_{0} (\mathbf{\Theta} \otimes \mathbf{\Sigma}) +
  \Vec\boldsymbol{\mu}\Vec^{T}\boldsymbol{\mu}) (\mathbf{e}_{d}^{D} \otimes \mathbf{I}_{K})\\
  &=& c_{0}\theta_{dd}\mathbf{\Sigma} + \boldsymbol{\mu}_{d}\boldsymbol{\mu}^{T}_{d}.
\end{eqnarray*}
This least result is obtained noting that, $\Vec \mathbf{ABC} = (\mathbf{C}^{T} \otimes
\mathbf{B})\Vec \mathbf{B}$, $a \otimes \mathbf{A} = a\mathbf{A}$ and $(\mathbf{A} \otimes
\mathbf{D})(\mathbf{B} \otimes \mathbf{E})(\mathbf{C} \otimes \mathbf{F}) = (\mathbf{ABC}
\otimes \mathbf{DEF})$. Similarly,
$$
  E\left(\mathbf{y}_{d}\mathbf{y}_{s}^{T} \otimes \mathbf{y}_{d}\mathbf{y}_{s}^{T}\right)
  = \mathbf{R}^{T} E(\Vec\mathbf{Y}\Vec\mathbf{Y}^{T} \otimes \Vec\mathbf{Y}\Vec\mathbf{Y}^{T})\mathbf{R}_{1}\\
$$
with $\mathbf{R}^{T} = (\mathbf{e}_{d}^{D \ T} \otimes \mathbf{I}_{K}) \otimes (\mathbf{e}_{d}^{D \ T}
\otimes \mathbf{I}_{K})$ and $\mathbf{R}_{1} = (\mathbf{e}_{s}^{D } \otimes \mathbf{I}_{K}) \otimes
(\mathbf{e}_{s}^{D } \otimes \mathbf{I}_{K})$. The desired result is obtained observing that: for
$\mathbf{A} \in \Re^{n \times s}$ and $\mathbf{B} \in \Re^{m \times t}$, $\mathbf{K}_{mn}(\mathbf{A}
\otimes \mathbf{B}) = (\mathbf{B} \otimes \mathbf{A})\mathbf{K}_{ts}$ and that $\mathbf{K}_{mm} \equiv
\mathbf{K}_{m}$ see \cite{mn:79}.
\end{proof}

Consider the following definition.
\begin{definition}\label{def02}
Let $\mathbf{A} \in \Re^{p \times q}$ such that
$$
  \mathbf{A} =
  \left (
  \begin{array}{cccc}
    \mathbf{A}_{11} & \mathbf{A}_{12} & \cdots & \mathbf{A}_{1n} \\
    \mathbf{A}_{21} & \mathbf{A}_{22} & \cdots & \mathbf{A}_{2n} \\
    \vdots & \vdots & \ddots & \vdots \\
    \mathbf{A}_{m1} & \mathbf{A}_{m2} & \cdots & \mathbf{A}_{mm}
  \end{array}
  \right ), \quad \mathbf{A}_{ij}\in \Re^{r \times s}
$$
with, $mr = p$ and $ns = q$, then
$$
  \build{\boxplus}{m,n}{i,j} \mathbf{A} = \sum_{i = 1}^{m} \sum_{j = 1}^{n} \mathbf{A}_{ij} \in \Re^{r \times s}.
$$
If $m = n$ then, $\build{\boxplus}{m,m}{i,j} \equiv \build{\boxplus}{m}{i,j}$.
\end{definition}

In addition, let $\mathbf{A} = (\mathbf{A}_{ij})$ and $\mathbf{B} = (\mathbf{B}_{ij})$
partitioned matrices. Then if $\odot$ denotes the Khatri-Rao product, see \cite[p.30]{r:73},
$$
  \mathbf{A} \odot \mathbf{B} = \left(\mathbf{A}_{ij} \otimes \mathbf{B}_{ij} \right)_{ij}.
$$
In particular, note that if $\mathbf{C} = (c_{ij})$, then
$$
  \mathbf{C} \odot \mathbf{A} = \left(c_{ij}\mathbf{A}_{ij}\right)_{ij}.
$$
Moreover,
$$
  \build{\boxplus}{}{i,j} (\mathbf{C} \odot \mathbf{A}) = \sum_{i} \sum_{j} \left(c_{ij}\mathbf{A}_{ij}\right)_{ij}.
$$

\begin{theorem}\label{teo03}
Suppose that $\mathbf{Y} \sim \mathcal{E}_{K \times D}^{(K-1),D}(\boldsymbol{\mu},
\mathbf{\Sigma} \otimes \mathbf{\Theta},h)$, with
$$
  \mathbf{Y} = (\mathbf{y}_{1}|\mathbf{y}_{2}| \cdots |\mathbf{y}_{D} ) \mbox{ and }
  \boldsymbol{\mu} = (\boldsymbol{\mu}_{1}|\boldsymbol{\mu}_{2}| \cdots
  |\boldsymbol{\mu}_{D}).
$$
And define
$$
    \mathbf{B} = \mathbf{Y}\mathbf{Y}^{T} = \sum_{d = 1}^{D} \mathbf{y}_{d}
    \mathbf{y}_{d}^{T}.
$$
Then
$$
  E(\mathbf{B}) = c_{0} \tr(\mathbf{\Theta})\mathbf{\Sigma} + \boldsymbol{\mu}
  \boldsymbol{\mu}^{T}.
$$
And
\begin{eqnarray*}
    \cov(\Vec \mathbf{B}))
   &=& \left(\mathbf{I}_{K^{2}}+\mathbf{K}_{K}\right)\left\{\kappa_{0}\tr\left(\mathbf{\Theta}^{2} \right)(\mathbf{\Sigma} \otimes
      \mathbf{\Sigma})  \right .\\
   && + \ c_{0}\left[\build{\boxplus}{D}{i,j}\left(\mathbf{\Theta} \odot \Vec \boldsymbol{\mu}\Vec^{T}\boldsymbol{\mu}\right)
      \otimes \mathbf{\Sigma} \right.\\
   && \left . \left . +  \mathbf{\Sigma} \otimes  \build{\boxplus}{D}{i,j}\left(\mathbf{\Theta} \odot \Vec \boldsymbol{\mu}
      \Vec^{T} \boldsymbol{\mu}\right)\right]\right\}\\
   && + \ \left[\kappa_{0}\tr\left(\mathbf{\Theta}^{2}\right) - c_{0}^{2} \tr^{2}(\mathbf{\Theta})\right] \Vec \mathbf{\Sigma}
   \Vec^{T}\mathbf{\Sigma}\\
   && + \ c_{0}\left\{\Vec \mathbf{\Sigma} \Vec^{T} \build{\boxplus}{D}{i,j}\left(\mathbf{\Theta} \odot \Vec
   \boldsymbol{\mu} \Vec^{T}\boldsymbol{\mu}\right)\right.\\
   && + \ \Vec\build{\boxplus}{D}{i,j}\left(\mathbf{\Theta} \odot \Vec \boldsymbol{\mu}\Vec^{T}\boldsymbol{\mu}\right) \Vec^{T}\mathbf{\Sigma}\\
   && \left . + \tr(\mathbf{\Theta})\left[\Vec \mathbf{\Sigma} \Vec^{T} \boldsymbol{\mu}\boldsymbol{\mu}^{T}
   + \Vec \boldsymbol{\mu}\boldsymbol{\mu}^{T} \Vec \mathbf{\Sigma}\right] \right\}.
\end{eqnarray*}
\end{theorem}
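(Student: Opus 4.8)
Here is how I would approach the proof. The plan is to reduce both moments to the column-wise quantities already isolated in Theorem \ref{teo02}, and then carry out the double-sum bookkeeping dictated by $\mathbf{B} = \sum_{d=1}^{D}\mathbf{y}_{d}\mathbf{y}_{d}^{T}$.

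First I would compute $E(\mathbf{B})$ by linearity of expectation together with part~1 of Theorem \ref{teo02},
\[
  E(\mathbf{B}) = \sum_{d=1}^{D}E(\mathbf{y}_{d}\mathbf{y}_{d}^{T})
  = \sum_{d=1}^{D}\left(c_{0}\theta_{dd}\mathbf{\Sigma} + \boldsymbol{\mu}_{d}\boldsymbol{\mu}_{d}^{T}\right),
\]
and then invoke $\sum_{d}\theta_{dd} = \tr(\mathbf{\Theta})$ and the block identity $\sum_{d}\boldsymbol{\mu}_{d}\boldsymbol{\mu}_{d}^{T} = \boldsymbol{\mu}\boldsymbol{\mu}^{T}$ (recall $\boldsymbol{\mu} = (\boldsymbol{\mu}_{1}|\cdots|\boldsymbol{\mu}_{D})$) to obtain $E(\mathbf{B}) = c_{0}\tr(\mathbf{\Theta})\mathbf{\Sigma} + \boldsymbol{\mu}\boldsymbol{\mu}^{T}$. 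This also records $\Vec E(\mathbf{B}) = c_{0}\tr(\mathbf{\Theta})\Vec\mathbf{\Sigma} + \Vec\boldsymbol{\mu}\boldsymbol{\mu}^{T}$, which feeds the second part.

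For the covariance the starting point is formula (\ref{mc01}), which already expresses $\cov(\Vec\mathbf{B})$ as $\sum_{d,s}E(\mathbf{y}_{d}\mathbf{y}_{s}^{T}\otimes\mathbf{y}_{d}\mathbf{y}_{s}^{T})$ minus $\Vec E(\mathbf{B})\,\Vec^{T}E(\mathbf{B})$. I would substitute part~2 of Theorem \ref{teo02} into the double sum and interchange $\sum_{d,s}$ with the (linear) Kronecker operations. Four reductions do the work: (i) $\sum_{d,s}\theta_{ds}^{2} = \tr(\mathbf{\Theta}^{2})$, by symmetry of $\mathbf{\Theta}$; (ii) since $\mathbf{\Sigma}$ is constant in $d,s$ it factors out of the Kronecker products, e.g. $\sum_{d,s}\theta_{ds}(\boldsymbol{\mu}_{d}\boldsymbol{\mu}_{s}^{T}\otimes\mathbf{\Sigma}) = \mathbf{M}\otimes\mathbf{\Sigma}$ with $\mathbf{M} := \sum_{d,s}\theta_{ds}\boldsymbol{\mu}_{d}\boldsymbol{\mu}_{s}^{T}$; (iii) this $\mathbf{M}$ is precisely the block contraction $\build{\boxplus}{D}{i,j}(\mathbf{\Theta}\odot\Vec\boldsymbol{\mu}\Vec^{T}\boldsymbol{\mu})$, because the $(d,s)$ block of $\Vec\boldsymbol{\mu}\Vec^{T}\boldsymbol{\mu}$ is $\boldsymbol{\mu}_{d}\boldsymbol{\mu}_{s}^{T}$ and the scalar Khatri--Rao rule of Definition \ref{def02} multiplies each block by $\theta_{ds}$ before the blocks are summed; and (iv) the pure-mean term collapses through $\boldsymbol{\mu}_{d}\boldsymbol{\mu}_{s}^{T}\otimes\boldsymbol{\mu}_{d}\boldsymbol{\mu}_{s}^{T} = (\boldsymbol{\mu}_{d}\otimes\boldsymbol{\mu}_{d})(\boldsymbol{\mu}_{s}\otimes\boldsymbol{\mu}_{s})^{T}$, so that $\sum_{d,s}\boldsymbol{\mu}_{d}\boldsymbol{\mu}_{s}^{T}\otimes\boldsymbol{\mu}_{d}\boldsymbol{\mu}_{s}^{T} = \Vec\boldsymbol{\mu}\boldsymbol{\mu}^{T}\Vec^{T}\boldsymbol{\mu}\boldsymbol{\mu}^{T}$ via $\sum_{d}\boldsymbol{\mu}_{d}\otimes\boldsymbol{\mu}_{d} = \Vec\boldsymbol{\mu}\boldsymbol{\mu}^{T}$.

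Assembling the pieces, the $\kappa_{0}$-terms give $\kappa_{0}\tr(\mathbf{\Theta}^{2})$ times $(\mathbf{I}_{K^{2}}+\mathbf{K}_{K})(\mathbf{\Sigma}\otimes\mathbf{\Sigma})$ and $\Vec\mathbf{\Sigma}\Vec^{T}\mathbf{\Sigma}$, the first being collected under the common factor $(\mathbf{I}_{K^{2}}+\mathbf{K}_{K})$; the $c_{0}$-terms produce $(\mathbf{I}_{K^{2}}+\mathbf{K}_{K})(\mathbf{M}\otimes\mathbf{\Sigma}+\mathbf{\Sigma}\otimes\mathbf{M})$ together with $\Vec\mathbf{\Sigma}\Vec^{T}\mathbf{M} + \Vec\mathbf{M}\Vec^{T}\mathbf{\Sigma}$; and the quartic mean term $\Vec\boldsymbol{\mu}\boldsymbol{\mu}^{T}\Vec^{T}\boldsymbol{\mu}\boldsymbol{\mu}^{T}$ cancels against the matching term in $-\Vec E(\mathbf{B})\Vec^{T}E(\mathbf{B})$. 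The same subtraction supplies $-c_{0}^{2}\tr^{2}(\mathbf{\Theta})\Vec\mathbf{\Sigma}\Vec^{T}\mathbf{\Sigma}$, which merges with the $\kappa_{0}$ outer-product term into the stated coefficient $\kappa_{0}\tr(\mathbf{\Theta}^{2}) - c_{0}^{2}\tr^{2}(\mathbf{\Theta})$, along with the mixed terms proportional to $\tr(\mathbf{\Theta})\Vec\mathbf{\Sigma}\Vec^{T}\boldsymbol{\mu}\boldsymbol{\mu}^{T}$ and its transpose. I expect the difficulty to be organisational rather than conceptual: keeping the non-commuting Kronecker factors in order while moving $\sum_{d,s}$ inside them, and above all tracking the sign of the cross term between $c_{0}\tr(\mathbf{\Theta})\Vec\mathbf{\Sigma}$ and $\Vec\boldsymbol{\mu}\boldsymbol{\mu}^{T}$ coming out of the expansion of $\Vec E(\mathbf{B})\Vec^{T}E(\mathbf{B})$, which must be reconciled with the double-sum contributions to land on the stated $\tr(\mathbf{\Theta})$ terms.
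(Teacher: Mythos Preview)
Your approach is exactly the one the paper takes: the paper's proof is a single line citing (\ref{B}), (\ref{mc01}), Definition~\ref{def02} and Theorem~\ref{teo02}, and you have spelled out precisely how those four ingredients combine. Your identifications $\sum_{d,s}\theta_{ds}^{2}=\tr(\mathbf{\Theta}^{2})$, $\sum_{d,s}\theta_{ds}\boldsymbol{\mu}_{d}\boldsymbol{\mu}_{s}^{T}=\build{\boxplus}{D}{i,j}(\mathbf{\Theta}\odot\Vec\boldsymbol{\mu}\Vec^{T}\boldsymbol{\mu})$ and $\sum_{d,s}\boldsymbol{\mu}_{d}\boldsymbol{\mu}_{s}^{T}\otimes\boldsymbol{\mu}_{d}\boldsymbol{\mu}_{s}^{T}=\Vec\boldsymbol{\mu}\boldsymbol{\mu}^{T}\Vec^{T}\boldsymbol{\mu}\boldsymbol{\mu}^{T}$ are the right reductions, and your caution about the sign of the $\tr(\mathbf{\Theta})$ cross term is well placed (the subtraction of $\Vec E(\mathbf{B})\Vec^{T}E(\mathbf{B})$ contributes it with a minus sign, consistent with the $(1-D)c_{0}$ coefficient in the ensuing corollary).
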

\begin{proof}
This is a consequence of (\ref{B}), (\ref{mc01}), Definition \ref{def02} and Theorem 2.
\end{proof}

\begin{corollary}
In Theorem \ref{teo03} assume that $\mathbf{\Theta} = \mathbf{I}_{D}$. Then
$$
  E(\mathbf{B}) = D c_{0} \mathbf{\Sigma} + \boldsymbol{\mu}
  \boldsymbol{\mu}^{T}.
$$
And
\begin{eqnarray*}
    \cov(\Vec \mathbf{B}))
   &=& \left(\mathbf{I}_{K^{2}}+\mathbf{K}_{K}\right)\left\{ D \kappa_{0}(\mathbf{\Sigma} \otimes
      \mathbf{\Sigma})+ c_{0}\left[\boldsymbol{\mu}\boldsymbol{\mu}^{T} \otimes \mathbf{\Sigma} +
      \mathbf{\Sigma} \otimes \boldsymbol{\mu} \boldsymbol{\mu}^{T}\right]\right\}\\
   && + \ D\left[\kappa_{0} - D c_{0}^{2} \right] \Vec \mathbf{\Sigma}
   \Vec^{T}\mathbf{\Sigma}\\
   && + \ (1-D)c_{0}[\Vec \mathbf{\Sigma} \Vec^{T}\boldsymbol{\mu}\boldsymbol{\mu}^{T}
   + \Vec \boldsymbol{\mu}\boldsymbol{\mu}^{T}\Vec^{T} \mathbf{\Sigma}].
\end{eqnarray*}
\end{corollary}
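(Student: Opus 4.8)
The plan is to derive the corollary as the $\mathbf{\Theta}=\mathbf{I}_{D}$ specialization of Theorem \ref{teo03}: no new distributional computation is required, only the substitution $\theta_{ds}=\delta_{ds}$ followed by evaluation of the $\mathbf{\Theta}$-dependent scalars and of the block-contraction operator $\build{\boxplus}{D}{i,j}$ of Definition \ref{def02}. First I would record the three scalar reductions $\tr(\mathbf{\Theta})=D$, $\tr(\mathbf{\Theta}^{2})=D$ and $\tr^{2}(\mathbf{\Theta})=D^{2}$. Feeding $\tr(\mathbf{\Theta})=D$ into $E(\mathbf{B})=c_{0}\tr(\mathbf{\Theta})\mathbf{\Sigma}+\boldsymbol{\mu}\boldsymbol{\mu}^{T}$ gives $E(\mathbf{B})=Dc_{0}\mathbf{\Sigma}+\boldsymbol{\mu}\boldsymbol{\mu}^{T}$ at once, and feeding the remaining two into the covariance turns the coefficient $\kappa_{0}\tr(\mathbf{\Theta}^{2})$ of $\mathbf{\Sigma}\otimes\mathbf{\Sigma}$ into $D\kappa_{0}$ and the coefficient $\kappa_{0}\tr(\mathbf{\Theta}^{2})-c_{0}^{2}\tr^{2}(\mathbf{\Theta})$ of $\Vec\mathbf{\Sigma}\Vec^{T}\mathbf{\Sigma}$ into $D(\kappa_{0}-Dc_{0}^{2})$.

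The one step that needs genuine care is the evaluation of $\build{\boxplus}{D}{i,j}(\mathbf{\Theta}\odot\Vec\boldsymbol{\mu}\Vec^{T}\boldsymbol{\mu})$, which must be performed at the level of $K\times K$ blocks rather than entries. Partitioning $\Vec\boldsymbol{\mu}\Vec^{T}\boldsymbol{\mu}$ into $D\times D$ blocks, its $(d,s)$ block is $\boldsymbol{\mu}_{d}\boldsymbol{\mu}_{s}^{T}$; by the scalar Khatri--Rao rule $\mathbf{C}\odot\mathbf{A}=(c_{ij}\mathbf{A}_{ij})_{ij}$ the $(d,s)$ block of $\mathbf{\Theta}\odot\Vec\boldsymbol{\mu}\Vec^{T}\boldsymbol{\mu}$ is $\theta_{ds}\boldsymbol{\mu}_{d}\boldsymbol{\mu}_{s}^{T}$, so $\build{\boxplus}{D}{i,j}(\mathbf{\Theta}\odot\Vec\boldsymbol{\mu}\Vec^{T}\boldsymbol{\mu})=\sum_{d,s}\theta_{ds}\boldsymbol{\mu}_{d}\boldsymbol{\mu}_{s}^{T}$. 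Setting $\mathbf{\Theta}=\mathbf{I}_{D}$ annihilates every off-diagonal block and leaves $\sum_{d}\boldsymbol{\mu}_{d}\boldsymbol{\mu}_{d}^{T}=\boldsymbol{\mu}\boldsymbol{\mu}^{T}$, which is exactly the matrix already appearing in $E(\mathbf{B})$. This identity is the crux, since it lets every $\build{\boxplus}{D}{i,j}(\cdots)$ occurrence be replaced by $\boldsymbol{\mu}\boldsymbol{\mu}^{T}$.

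Finally I would reassemble the three groups. The first brace becomes $(\mathbf{I}_{K^{2}}+\mathbf{K}_{K})\{D\kappa_{0}(\mathbf{\Sigma}\otimes\mathbf{\Sigma})+c_{0}[\boldsymbol{\mu}\boldsymbol{\mu}^{T}\otimes\mathbf{\Sigma}+\mathbf{\Sigma}\otimes\boldsymbol{\mu}\boldsymbol{\mu}^{T}]\}$ and the isolated term becomes $D(\kappa_{0}-Dc_{0}^{2})\Vec\mathbf{\Sigma}\Vec^{T}\mathbf{\Sigma}$, matching the statement. In the last brace the contracted Khatri--Rao contribution is now $\boldsymbol{\mu}\boldsymbol{\mu}^{T}$ while the $\tr(\mathbf{\Theta})$-weighted mean contribution is $D\boldsymbol{\mu}\boldsymbol{\mu}^{T}$; because the latter enters the covariance with the opposite sign, being inherited from the subtracted product $\Vec E(\mathbf{B})\Vec^{T}E(\mathbf{B})$, the two coalesce into the single coefficient $(1-D)c_{0}$ multiplying $\Vec\mathbf{\Sigma}\Vec^{T}\boldsymbol{\mu}\boldsymbol{\mu}^{T}+\Vec\boldsymbol{\mu}\boldsymbol{\mu}^{T}\Vec^{T}\mathbf{\Sigma}$. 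Thus the only real risk is a bookkeeping slip in this last coefficient; tracking its sign through the covariance decomposition is what I expect to be the main obstacle.
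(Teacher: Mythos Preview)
Your proposal is correct and follows exactly the route the paper intends: the corollary is stated without proof as an immediate specialization of Theorem \ref{teo03}, and your substitutions $\tr(\mathbf{\Theta})=\tr(\mathbf{\Theta}^{2})=D$, $\tr^{2}(\mathbf{\Theta})=D^{2}$ together with the block evaluation $\build{\boxplus}{D}{i,j}(\mathbf{I}_{D}\odot\Vec\boldsymbol{\mu}\Vec^{T}\boldsymbol{\mu})=\sum_{d}\boldsymbol{\mu}_{d}\boldsymbol{\mu}_{d}^{T}=\boldsymbol{\mu}\boldsymbol{\mu}^{T}$ are precisely what is needed. Your careful handling of the last brace is in fact sharper than a naive reading of Theorem \ref{teo03}: as printed, the $\tr(\mathbf{\Theta})$ term there carries a plus sign, which would yield $(1+D)c_{0}$ rather than $(1-D)c_{0}$; by tracing that term back to the subtracted product $\Vec E(\mathbf{B})\Vec^{T}E(\mathbf{B})$ in (\ref{mc01}) you recover the correct minus sign, so your bookkeeping concern was well placed and correctly resolved.
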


In univariate case, when $\boldsymbol{\mu} = \mathbf{0}$, these results were obtained in
general and a particular cases in \cite[Theorem 3.2.13 and Example 3.2.1]{gv:93},  with a
several minor errors. In particular, for general case they write $D^{2}(\kappa_{0}-c_{0}^{2})$
and for matrix multivariate $T$ distribution they write $D(\kappa_{0}-c_{0}^{2})$, with $D =
n-1$, instead of $D\left(\kappa_{0} - D c_{0}^{2} \right)$.

\subsection{Moments of $\mathbf{B}$ under independence}

Let $\mathbf{Y}$ and $\boldsymbol{\mu}$ such that
$$
  \mathbf{Y} = (\mathbf{y}_{1}|\mathbf{y}_{2}| \cdots |\mathbf{y}_{D} ) \mbox{ and }
  \boldsymbol{\mu} = (\boldsymbol{\mu}_{1}|\boldsymbol{\mu}_{2}| \cdots
  |\boldsymbol{\mu}_{D}),
$$
where $\mathbf{y}_{1},\mathbf{y}_{2}, \dots, \mathbf{y}_{D}$ are independent and
$$
   \mathbf{y}_{d} \sim  \mathcal{E}_{K}^{(K-1)}(\boldsymbol{\mu}_{d}, \theta_{dd}\mathbf{\Sigma}; h),
$$
and by independence, $\cov(\mathbf{y}_{d}, \mathbf{y}_{s}) = \mathbf{0}$, for $d \neq s =
1,2\dots,D$.

Given
$$
    \mathbf{B} = \mathbf{Y}\mathbf{Y}^{T} = \sum_{d = 1}^{D} \mathbf{y}_{d}
    \mathbf{y}_{d}^{T},
$$
we have
$$
  E(\mathbf{B}) = E \left (\mathbf{Y}\mathbf{Y}^{T}\right) = E \left (\sum_{d = 1}^{D} \mathbf{y}_{d}
    \mathbf{y}_{d}^{T}\right)= \sum_{d = 1}^{D} E \left (\mathbf{y}_{d}
    \mathbf{y}_{d}^{T}\right).
$$

And under assumption that $\mathbf{y}_{d}$, $d = 1,2 ,\dots,D$ are independent,
\begin{eqnarray*}
% \nonumber to remove numbering (before each equation)
  \cov(\Vec \mathbf{B}) &=& \cov \left (\Vec \left(\mathbf{Y}\mathbf{Y}^{T}\right)\right)
  = \cov \left (\sum_{d = 1}^{D} \Vec\left(\mathbf{y}_{d} \mathbf{y}_{d}^{T}\right)\right)\\
  &=& \sum_{d = 1}^{D} \cov\left(\Vec\left (\mathbf{y}_{d}\mathbf{y}_{d}^{T}\right)\right)
  = \sum_{d = 1}^{D} \cov\left(\mathbf{y}_{d}\otimes \mathbf{y}_{d}\right).
\end{eqnarray*}
Then, we need to find $ E \left (\mathbf{y}_{d} \mathbf{y}_{d}^{T}\right)$ and
\begin{eqnarray*}
% \nonumber to remove numbering (before each equation)
  \cov\left(\mathbf{y}_{d}\otimes \mathbf{y}_{d}\right) &=& E(\left(\mathbf{y}_{d}\otimes \mathbf{y}_{d}\right)
  \left(\mathbf{y}_{d}\otimes \mathbf{y}_{d}\right)^{T})-E\left(\mathbf{y}_{d}\otimes
  \mathbf{y}_{d}\right)E\left(\mathbf{y}_{d}\otimes \mathbf{y}_{d}\right)^{T} \\
   &=& E\left(\mathbf{y}_{d}\mathbf{y}_{d}^{T}\otimes \mathbf{y}_{d}\mathbf{y}_{d}^{T}\right)
  -E\left(\Vec \mathbf{y}_{d}\mathbf{y}_{d}^{T}\right)E\left(\Vec^{T}
  \mathbf{y}_{d}\mathbf{y}_{d}^{T}\right).
\end{eqnarray*}

These results are obtained in the following

\begin{corollary}\label{cor00}
Let $\mathbf{y}_{d} \sim  \mathcal{E}_{K}^{(K-1)}(\boldsymbol{\mu}_{d},
\theta_{dd}\mathbf{\Sigma}; h), \quad d = 1, 2, \dots, D$, where
$\mathbf{y}_{1},\mathbf{y}_{2}, \dots, \mathbf{y}_{D}$ are independent. Then
\begin{enumerate}
  \item $E(\mathbf{y}_{d}\mathbf{y}^{T}_{d}) = c_{0}\theta_{dd}\mathbf{\Sigma} + \boldsymbol{\mu}_{d}
  \boldsymbol{\mu}^{T}_{d}$.
  \item And $\cov(\mathbf{y}_{d} \otimes \mathbf{y}_{d}) = \cov(\Vec\mathbf{y}_{d}\mathbf{y}^{T}_{d})$
\begin{eqnarray*}
       &=& (\mathbf{I}_{K^{2}}+\mathbf{K}_{K})\left\{\kappa_{0} \theta_{dd}^{2} (\mathbf{\Sigma} \otimes \mathbf{\Sigma}) +
      c_{0}\theta_{dd}\left[\boldsymbol{\mu}_{d}\boldsymbol{\mu}^{T}_{d}  \otimes \mathbf{\Sigma} + \mathbf{\Sigma} \otimes
      \boldsymbol{\mu}_{d}\boldsymbol{\mu}^{T}_{d}\right]\right \} \\
   && + \ \theta_{dd}^{2}(\kappa_{0} - c^{2}_{0})\Vec \mathbf{\Sigma}   \Vec^{T}\mathbf{\Sigma}.
\end{eqnarray*}
\end{enumerate}
\end{corollary}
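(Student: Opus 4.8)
The plan is to recognise statement (1) as an immediate instance of Theorem \ref{teo02} and to obtain the covariance in (2) by specialising the mixed fourth moment of that theorem to the diagonal index $s=d$, then subtracting the outer product of the first moments. Note first that the independence of $\mathbf{y}_{1},\dots,\mathbf{y}_{D}$ plays no role \emph{inside} the corollary: only the marginal of a single column enters $\cov(\mathbf{y}_{d}\otimes\mathbf{y}_{d})$, and that marginal is $\mathcal{E}_{K}^{(K-1)}(\boldsymbol{\mu}_{d},\theta_{dd}\mathbf{\Sigma};h)$. Hence statement (1) is exactly Theorem \ref{teo02}(1). Independence is what lets the preamble write $\cov(\Vec\mathbf{B})=\sum_{d}\cov(\Vec\mathbf{y}_{d}\mathbf{y}_{d}^{T})$, but it is not needed to prove the per-column formula itself.

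For statement (2) I would start from the defining decomposition already recorded before the corollary,
\begin{equation*}
  \cov(\mathbf{y}_{d}\otimes\mathbf{y}_{d})
  = E\left(\mathbf{y}_{d}\mathbf{y}_{d}^{T}\otimes\mathbf{y}_{d}\mathbf{y}_{d}^{T}\right)
  - E\left(\Vec\mathbf{y}_{d}\mathbf{y}_{d}^{T}\right)E\left(\Vec^{T}\mathbf{y}_{d}\mathbf{y}_{d}^{T}\right).
\end{equation*}
The first term is Theorem \ref{teo02}(2) evaluated at $s=d$, producing the four lines carrying $\kappa_{0}\theta_{dd}^{2}$, $c_{0}\theta_{dd}$, and the pure mean term $\boldsymbol{\mu}_{d}\boldsymbol{\mu}_{d}^{T}\otimes\boldsymbol{\mu}_{d}\boldsymbol{\mu}_{d}^{T}$. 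For the second term I would vectorise statement (1) by linearity of $\Vec$, giving $E(\Vec\mathbf{y}_{d}\mathbf{y}_{d}^{T})=c_{0}\theta_{dd}\Vec\mathbf{\Sigma}+\Vec\boldsymbol{\mu}_{d}\boldsymbol{\mu}_{d}^{T}$, and then expand the rank-one outer product into its four summands: $c_{0}^{2}\theta_{dd}^{2}\Vec\mathbf{\Sigma}\Vec^{T}\mathbf{\Sigma}$, the two cross terms $c_{0}\theta_{dd}\Vec\mathbf{\Sigma}\Vec^{T}\boldsymbol{\mu}_{d}\boldsymbol{\mu}_{d}^{T}$ and $c_{0}\theta_{dd}\Vec\boldsymbol{\mu}_{d}\boldsymbol{\mu}_{d}^{T}\Vec^{T}\mathbf{\Sigma}$, and $\Vec\boldsymbol{\mu}_{d}\boldsymbol{\mu}_{d}^{T}\Vec^{T}\boldsymbol{\mu}_{d}\boldsymbol{\mu}_{d}^{T}$.

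The subtraction is then pure bookkeeping. The two cross terms cancel exactly against the corresponding $c_{0}\theta_{dd}$ terms of Theorem \ref{teo02}(2). The pure mean term cancels once I invoke the identity $\Vec(\mathbf{x}\mathbf{y}^{T})=\mathbf{y}\otimes\mathbf{x}$ recorded before Theorem \ref{teo01}: taking $\mathbf{x}=\mathbf{y}=\boldsymbol{\mu}_{d}$ and using the mixed-product rule $(\mathbf{a}\otimes\mathbf{b})(\mathbf{c}\otimes\mathbf{d})^{T}=\mathbf{a}\mathbf{c}^{T}\otimes\mathbf{b}\mathbf{d}^{T}$ gives $\Vec\boldsymbol{\mu}_{d}\boldsymbol{\mu}_{d}^{T}\Vec^{T}\boldsymbol{\mu}_{d}\boldsymbol{\mu}_{d}^{T}=(\boldsymbol{\mu}_{d}\otimes\boldsymbol{\mu}_{d})(\boldsymbol{\mu}_{d}\otimes\boldsymbol{\mu}_{d})^{T}=\boldsymbol{\mu}_{d}\boldsymbol{\mu}_{d}^{T}\otimes\boldsymbol{\mu}_{d}\boldsymbol{\mu}_{d}^{T}$. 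What survives is the $(\mathbf{I}_{K^{2}}+\mathbf{K}_{K})$ block carrying $\kappa_{0}\theta_{dd}^{2}(\mathbf{\Sigma}\otimes\mathbf{\Sigma})+c_{0}\theta_{dd}[\boldsymbol{\mu}_{d}\boldsymbol{\mu}_{d}^{T}\otimes\mathbf{\Sigma}+\mathbf{\Sigma}\otimes\boldsymbol{\mu}_{d}\boldsymbol{\mu}_{d}^{T}]$, together with the lone $\Vec\mathbf{\Sigma}\Vec^{T}\mathbf{\Sigma}$ term whose coefficient collapses to $\kappa_{0}\theta_{dd}^{2}-c_{0}^{2}\theta_{dd}^{2}=\theta_{dd}^{2}(\kappa_{0}-c_{0}^{2})$, which is precisely the claimed formula.

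The only genuinely delicate step I anticipate is the vectorisation identity needed to annihilate the fourth-order mean term, so I would display that $\Vec$/mixed-product computation explicitly; the remaining collection and cancellation of coefficients is routine linear algebra.
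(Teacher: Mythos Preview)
Your proposal is correct and follows exactly the paper's approach: the paper's proof is the single line ``It follows from Theorem \ref{teo02}, taking $d = s$,'' and you have simply spelled out the subtraction that this line leaves implicit. The bookkeeping you describe (cancellation of the $c_{0}\theta_{dd}$ cross terms, the $\Vec$/mixed-product identity killing the pure mean term, and the collapse of the $\Vec\mathbf{\Sigma}\Vec^{T}\mathbf{\Sigma}$ coefficient to $\theta_{dd}^{2}(\kappa_{0}-c_{0}^{2})$) is accurate.
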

\begin{proof}
It is follows from Theorem \ref{teo02}, taking $d = s$.
\end{proof}

\begin{theorem}\label{teo04}
Suppose that $\mathbf{y}_{d} \sim  \mathcal{E}_{K}^{(K-1)}(\boldsymbol{\mu}_{d},
\theta_{dd}\mathbf{\Sigma}; h), \quad d = 1, 2, \dots,D$, with
$$
  \mathbf{Y} = (\mathbf{y}_{1}|\mathbf{y}_{2}| \cdots |\mathbf{y}_{D} ) \mbox{ and }
  \boldsymbol{\mu} = (\boldsymbol{\mu}_{1}|\boldsymbol{\mu}_{2}| \cdots |\boldsymbol{\mu}_{D}
  ),
$$
and let $\mathbf{\Theta} = \diag(\theta_{11}, \theta_{22}, \dots,\theta_{dd})$, and
$$
    \mathbf{B} = \mathbf{Y}\mathbf{Y}^{T} = \sum_{d = 1}^{D} \mathbf{y}_{d}
    \mathbf{y}_{d}^{T}.
$$
Then,
\begin{eqnarray*}
% \nonumber to remove numbering (before each equation)
  E(\mathbf{B}) &=& c_{0} \tr(\mathbf{\Theta}) \mathbf{\Sigma} + \boldsymbol{\mu}\boldsymbol{\mu}^{T}\\
  \cov(\Vec \mathbf{B}) &=& \left(\mathbf{I}_{K^{2}}+\mathbf{K}_{K}\right)\left\{\kappa_{0}
    \tr\left(\mathbf{\Theta}^{2}\right) (\mathbf{\Sigma} \otimes \mathbf{\Sigma})\right .\\
    & &\left . + \ c_{0}\left[\left(\sum_{d=1}^{D}\theta_{dd}\boldsymbol{\mu}_{d}\boldsymbol{\mu}^{T}_{d}\right )
    \otimes \mathbf{\Sigma} + \mathbf{\Sigma} \otimes
    \left(\sum_{d = 1}^{D}\theta_{dd}\boldsymbol{\mu}_{d}\boldsymbol{\mu}^{T}_{d}\right)\right]\right\}\\
    && + \ (\kappa_{0} - c^{2}_{0})\tr\left(\mathbf{\Theta}^{2}\right)\Vec \mathbf{\Sigma} \Vec^{T}\mathbf{\Sigma}
\end{eqnarray*}
\end{theorem}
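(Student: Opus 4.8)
The plan is to reduce everything to the single-column moments already recorded in Corollary \ref{cor00} and to use independence so that only the diagonal ($d=s$) contributions survive.

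For the mean I would start from $\mathbf{B} = \sum_{d=1}^{D}\mathbf{y}_{d}\mathbf{y}_{d}^{T}$ and apply linearity of expectation together with part 1 of Corollary \ref{cor00}, which gives $E(\mathbf{B}) = \sum_{d=1}^{D}\left(c_{0}\theta_{dd}\mathbf{\Sigma} + \boldsymbol{\mu}_{d}\boldsymbol{\mu}_{d}^{T}\right)$. Two elementary identities then close this part: since $\mathbf{\Theta} = \diag(\theta_{11},\dots,\theta_{DD})$ we have $\sum_{d}\theta_{dd} = \tr(\mathbf{\Theta})$, and since $\boldsymbol{\mu} = (\boldsymbol{\mu}_{1}|\cdots|\boldsymbol{\mu}_{D})$ the block product satisfies $\boldsymbol{\mu}\boldsymbol{\mu}^{T} = \sum_{d=1}^{D}\boldsymbol{\mu}_{d}\boldsymbol{\mu}_{d}^{T}$. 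Together these yield $E(\mathbf{B}) = c_{0}\tr(\mathbf{\Theta})\mathbf{\Sigma} + \boldsymbol{\mu}\boldsymbol{\mu}^{T}$.

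For the covariance I would first invoke the independence reduction spelled out just before the statement: writing $\Vec\mathbf{B} = \sum_{d}\Vec(\mathbf{y}_{d}\mathbf{y}_{d}^{T})$ and noting that the summands are independent, being functions of the independent $\mathbf{y}_{d}$, kills every cross-covariance, so $\cov(\Vec\mathbf{B}) = \sum_{d=1}^{D}\cov(\Vec(\mathbf{y}_{d}\mathbf{y}_{d}^{T})) = \sum_{d=1}^{D}\cov(\mathbf{y}_{d}\otimes\mathbf{y}_{d})$, using $\Vec(\mathbf{y}_{d}\mathbf{y}_{d}^{T}) = \mathbf{y}_{d}\otimes\mathbf{y}_{d}$. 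Substituting part 2 of Corollary \ref{cor00} into each summand and summing term by term is then mechanical: the factor $(\mathbf{I}_{K^{2}}+\mathbf{K}_{K})$ is a fixed matrix and pulls outside the sum; the coefficient $\sum_{d}\theta_{dd}^{2}$ equals $\tr(\mathbf{\Theta}^{2})$ because $\mathbf{\Theta}$ is diagonal; the noncentral contributions gather as $\sum_{d}\theta_{dd}\boldsymbol{\mu}_{d}\boldsymbol{\mu}_{d}^{T}$ inside the two Kronecker slots; and the pieces $\theta_{dd}^{2}(\kappa_{0}-c_{0}^{2})\Vec\mathbf{\Sigma}\Vec^{T}\mathbf{\Sigma}$ collect into $(\kappa_{0}-c_{0}^{2})\tr(\mathbf{\Theta}^{2})\Vec\mathbf{\Sigma}\Vec^{T}\mathbf{\Sigma}$. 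This reproduces the stated formula.

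There is no genuinely hard step here; the work is bookkeeping. The one point requiring care is the passage $\sum_{d}\theta_{dd}^{2} = \tr(\mathbf{\Theta}^{2})$, valid precisely because the independence hypothesis forces $\mathbf{\Theta}$ to be diagonal. Accordingly the final check I would perform is consistency with the dependent case of Theorem \ref{teo03}: specialising $\mathbf{\Theta}$ there to a diagonal matrix must collapse the off-diagonal $\theta_{ds}^{2}$ contributions and reduce the $\boxplus$ terms, built from the double sum $\sum_{d}\sum_{s}\theta_{ds}\boldsymbol{\mu}_{d}\boldsymbol{\mu}_{s}^{T}$, to the diagonal sums $\sum_{d}\theta_{dd}\boldsymbol{\mu}_{d}\boldsymbol{\mu}_{d}^{T}$ appearing above, confirming that the present result is exactly the diagonal restriction of the general one.
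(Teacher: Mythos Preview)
Your proposal is correct and follows essentially the same route as the paper: apply Corollary \ref{cor00} column by column, use linearity of expectation for $E(\mathbf{B})$, and use independence of the $\mathbf{y}_{d}$ to write $\cov(\Vec\mathbf{B})$ as the sum of the individual $\cov(\Vec(\mathbf{y}_{d}\mathbf{y}_{d}^{T}))$. The paper's proof is terser, omitting the explicit bookkeeping identities and the cross-check against Theorem \ref{teo03} that you include, but the argument is the same.
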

\begin{proof}
From Corollary \ref{cor00},
\begin{eqnarray*}
% \nonumber to remove numbering (before each equation)
  E(\mathbf{B}) &=& E \left (\mathbf{Y}\mathbf{Y}^{T}\right) = E \left (\sum_{d = 1}^{D} \mathbf{y}_{d}
    \mathbf{y}_{d}^{T}\right)= \sum_{d = 1}^{D} E \left (\mathbf{y}_{d}
    \mathbf{y}_{d}^{T}\right)\\
    &=& \sum_{d = 1}^{D}\left(c_{0} \theta_{dd}\mathbf{\Sigma} +
    \boldsymbol{\mu}_{d}\boldsymbol{\mu}^{T}_{d}\right) \\
    &=& c_{0} \tr(\mathbf{\Theta})\mathbf{\Sigma} +
    \sum_{d = 1}^{D}\boldsymbol{\mu}_{d}\boldsymbol{\mu}^{T}_{d} =  c_{0} \tr(\mathbf{\Theta})\mathbf{\Sigma} +
    \boldsymbol{\mu}\boldsymbol{\mu}^{T}.
\end{eqnarray*}
Similarly,
\begin{eqnarray*}
% \nonumber to remove numbering (before each equation)
  \cov(\Vec \mathbf{B}) &=& \cov \left (\Vec \left(\mathbf{Y}\mathbf{Y}^{T}\right)\right)
  = \cov \left (\sum_{d = 1}^{D} \Vec\left(\mathbf{y}_{d} \mathbf{y}_{d}^{T}\right)\right)\\
  &=& \sum_{d = 1}^{D} \cov\left(\Vec\left (\mathbf{y}_{d}\mathbf{y}_{d}^{T}\right)\right).
\end{eqnarray*}
from the desired result is obtained.
\end{proof}

Now if $\mathbf{\Theta} = \mathbf{I}_{D}$, we have the following results.

\begin{corollary}\label{cor01}
Let $\mathbf{y}_{d} \sim  \mathcal{E}_{K}^{(K-1)}(\boldsymbol{\mu}_{d}, \mathbf{\Sigma}; h),
\quad d = 1, 2, \dots, D$, where $\mathbf{y}_{1},\mathbf{y}_{2}, \dots, \mathbf{y}_{D}$ are
independent. Then
\begin{enumerate}
  \item $E(\mathbf{y}_{d}\mathbf{y}^{T}_{d}) = c_{0}\mathbf{\Sigma} + \boldsymbol{\mu}_{d}
  \boldsymbol{\mu}^{T}_{d}$.
  \item and $\cov(\mathbf{y}_{d} \otimes \mathbf{y}_{d}) = \cov(\Vec\mathbf{y}_{d}\mathbf{y}^{T}_{d})$
  \begin{eqnarray*}
% \nonumber to remove numbering (before each equation)
   &=& (\mathbf{I}_{K^{2}}+\mathbf{K}_{K})[\kappa_{0} (\mathbf{\Sigma} \otimes \mathbf{\Sigma}) +
      c_{0}(\boldsymbol{\mu}_{d}\boldsymbol{\mu}^{T}_{d}  \otimes \mathbf{\Sigma} + \mathbf{\Sigma} \otimes
      \boldsymbol{\mu}_{d}\boldsymbol{\mu}^{T}_{d})] \\
   && + (\kappa_{0} - c^{2}_{0})\Vec \mathbf{\Sigma}   \Vec^{T}\mathbf{\Sigma},
\end{eqnarray*}
\end{enumerate}
\end{corollary}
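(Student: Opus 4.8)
The plan is to observe that Corollary \ref{cor01} is simply the specialization $\theta_{dd} = 1$ of Corollary \ref{cor00}, so no fresh computation is needed. The hypothesis here, $\mathbf{y}_{d} \sim \mathcal{E}_{K}^{(K-1)}(\boldsymbol{\mu}_{d}, \mathbf{\Sigma}; h)$ with the $\mathbf{y}_{d}$ independent, is exactly the hypothesis of Corollary \ref{cor00} once the scale matrix $\theta_{dd}\mathbf{\Sigma}$ is read as $\mathbf{\Sigma}$, i.e. with the scalar $\theta_{dd}$ set equal to $1$ for the index $d$ under consideration. Thus the entire argument reduces to a direct substitution into the two already established identities of Corollary \ref{cor00}.

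First I would invoke part~1 of Corollary \ref{cor00}, namely $E(\mathbf{y}_{d}\mathbf{y}^{T}_{d}) = c_{0}\theta_{dd}\mathbf{\Sigma} + \boldsymbol{\mu}_{d}\boldsymbol{\mu}^{T}_{d}$, and put $\theta_{dd} = 1$ to obtain $E(\mathbf{y}_{d}\mathbf{y}^{T}_{d}) = c_{0}\mathbf{\Sigma} + \boldsymbol{\mu}_{d}\boldsymbol{\mu}^{T}_{d}$, which is claim~1. Next I would take part~2 of Corollary \ref{cor00} and again set $\theta_{dd} = 1$: every factor $\theta_{dd}$ and $\theta_{dd}^{2}$ collapses to $1$, so the bracketed term $(\mathbf{I}_{K^{2}}+\mathbf{K}_{K})\{\kappa_{0}\theta_{dd}^{2}(\mathbf{\Sigma}\otimes\mathbf{\Sigma}) + c_{0}\theta_{dd}[\boldsymbol{\mu}_{d}\boldsymbol{\mu}^{T}_{d}\otimes\mathbf{\Sigma} + \mathbf{\Sigma}\otimes\boldsymbol{\mu}_{d}\boldsymbol{\mu}^{T}_{d}]\}$ reduces to $(\mathbf{I}_{K^{2}}+\mathbf{K}_{K})\{\kappa_{0}(\mathbf{\Sigma}\otimes\mathbf{\Sigma}) + c_{0}(\boldsymbol{\mu}_{d}\boldsymbol{\mu}^{T}_{d}\otimes\mathbf{\Sigma} + \mathbf{\Sigma}\otimes\boldsymbol{\mu}_{d}\boldsymbol{\mu}^{T}_{d})\}$, while the trailing term $\theta_{dd}^{2}(\kappa_{0}-c_{0}^{2})\Vec\mathbf{\Sigma}\Vec^{T}\mathbf{\Sigma}$ becomes $(\kappa_{0}-c_{0}^{2})\Vec\mathbf{\Sigma}\Vec^{T}\mathbf{\Sigma}$. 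This is precisely claim~2.

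Since each step is a verbatim substitution into an identity proved earlier, there is essentially no obstacle to overcome. The only point worth recording is that the stated equality $\cov(\mathbf{y}_{d}\otimes\mathbf{y}_{d}) = \cov(\Vec\mathbf{y}_{d}\mathbf{y}_{d}^{T})$ is inherited directly from Corollary \ref{cor00}, being a consequence of the identity $\Vec\mathbf{y}_{d}\mathbf{y}_{d}^{T} = \mathbf{y}_{d}\otimes\mathbf{y}_{d}$ for a vector $\mathbf{y}_{d}$ already recorded in the text. Equivalently, one could reach the same conclusion in a single pass by returning to Theorem \ref{teo02}, setting $s = d$ and $\theta_{dd} = 1$ simultaneously; I would prefer routing through Corollary \ref{cor00} only because the $d = s$ reduction has already been carried out there, leaving nothing but the scalar specialization to perform.
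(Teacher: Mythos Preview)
Your proposal is correct and matches the paper's own proof, which simply records ``It is immediately'' after introducing the specialization $\mathbf{\Theta} = \mathbf{I}_{D}$; this is exactly the substitution $\theta_{dd} = 1$ into Corollary~\ref{cor00} that you carry out.
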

\begin{proof}
It is immediately.
\end{proof}
\begin{theorem}
Suppose that $\mathbf{y}_{d} \sim  \mathcal{E}_{K}^{(K-1)}(\boldsymbol{\mu}_{d},
\mathbf{\Sigma}; h), \quad d = 1, 2, \dots,D$, with
$$
  \mathbf{Y} = (\mathbf{y}_{1}|\mathbf{y}_{2}| \cdots |\mathbf{y}_{D} ) \mbox{ and }
  \boldsymbol{\mu} = (\boldsymbol{\mu}_{1}|\boldsymbol{\mu}_{2}| \cdots |\boldsymbol{\mu}_{D}
  ),
$$
and let
$$
    \mathbf{B} = \mathbf{Y}\mathbf{Y}^{T} = \sum_{d = 1}^{D} \mathbf{y}_{d}
    \mathbf{y}_{d}^{T}.
$$
Then,
\begin{eqnarray*}
% \nonumber to remove numbering (before each equation)
  E(\mathbf{B}) &=& D c_{0} \mathbf{\Sigma} + \boldsymbol{\mu}\boldsymbol{\mu}^{T}\\
  \cov(\Vec \mathbf{B}) &=& (\mathbf{I}_{K^{2}}+\mathbf{K}_{K})[D \kappa_{0} (\mathbf{\Sigma} \otimes \mathbf{\Sigma}) +
  c_{0}(\boldsymbol{\mu}\boldsymbol{\mu}^{T}  \otimes \mathbf{\Sigma} + \mathbf{\Sigma} \otimes
      \boldsymbol{\mu}\boldsymbol{\mu}^{T})]\\
      && + \ D(\kappa_{0} - c^{2}_{0})\Vec \mathbf{\Sigma} \Vec^{T}\mathbf{\Sigma}
\end{eqnarray*}
\end{theorem}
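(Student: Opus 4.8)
The plan is to recognise this theorem as the special case $\mathbf{\Theta} = \mathbf{I}_{D}$ of Theorem \ref{teo04}, so the whole argument reduces to substitution together with two trace identities and one block-multiplication identity. First I would verify the mean. By linearity of expectation, $E(\mathbf{B}) = \sum_{d=1}^{D} E(\mathbf{y}_{d}\mathbf{y}_{d}^{T})$, and each summand is supplied by part (1) of Corollary \ref{cor01} as $c_{0}\mathbf{\Sigma} + \boldsymbol{\mu}_{d}\boldsymbol{\mu}_{d}^{T}$. Summing the scalar part gives $D c_{0}\mathbf{\Sigma}$, while the outer-product part collapses via the block identity $\sum_{d=1}^{D}\boldsymbol{\mu}_{d}\boldsymbol{\mu}_{d}^{T} = \boldsymbol{\mu}\boldsymbol{\mu}^{T}$, which follows from writing $\boldsymbol{\mu} = (\boldsymbol{\mu}_{1}|\cdots|\boldsymbol{\mu}_{D})$ and expanding $\boldsymbol{\mu}\boldsymbol{\mu}^{T}$ as a sum of column-by-column products. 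This yields $E(\mathbf{B}) = D c_{0}\mathbf{\Sigma} + \boldsymbol{\mu}\boldsymbol{\mu}^{T}$.

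For the covariance I would exploit independence of $\mathbf{y}_{1},\dots,\mathbf{y}_{D}$ to write $\cov(\Vec\mathbf{B}) = \cov(\sum_{d}\Vec(\mathbf{y}_{d}\mathbf{y}_{d}^{T})) = \sum_{d=1}^{D}\cov(\Vec(\mathbf{y}_{d}\mathbf{y}_{d}^{T}))$, the cross-covariances vanishing. Each term is provided verbatim by part (2) of Corollary \ref{cor01}, namely $(\mathbf{I}_{K^{2}}+\mathbf{K}_{K})[\kappa_{0}(\mathbf{\Sigma}\otimes\mathbf{\Sigma}) + c_{0}(\boldsymbol{\mu}_{d}\boldsymbol{\mu}_{d}^{T}\otimes\mathbf{\Sigma} + \mathbf{\Sigma}\otimes\boldsymbol{\mu}_{d}\boldsymbol{\mu}_{d}^{T})] + (\kappa_{0}-c_{0}^{2})\Vec\mathbf{\Sigma}\Vec^{T}\mathbf{\Sigma}$. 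Summing over $d$: the $\kappa_{0}(\mathbf{\Sigma}\otimes\mathbf{\Sigma})$ and $(\kappa_{0}-c_{0}^{2})\Vec\mathbf{\Sigma}\Vec^{T}\mathbf{\Sigma}$ contributions are constant in $d$ and each picks up a factor $D$; the two mixed terms factor as $c_{0}((\sum_{d}\boldsymbol{\mu}_{d}\boldsymbol{\mu}_{d}^{T})\otimes\mathbf{\Sigma})$ and $c_{0}(\mathbf{\Sigma}\otimes(\sum_{d}\boldsymbol{\mu}_{d}\boldsymbol{\mu}_{d}^{T}))$ by bilinearity of the Kronecker product, and the same block identity collapses $\sum_{d}\boldsymbol{\mu}_{d}\boldsymbol{\mu}_{d}^{T}$ to $\boldsymbol{\mu}\boldsymbol{\mu}^{T}$. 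Collecting the four groups reproduces exactly the asserted expression.

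Equivalently, and even more briefly, I could instantiate Theorem \ref{teo04} at $\mathbf{\Theta}=\mathbf{I}_{D}$, noting $\tr(\mathbf{\Theta}) = \tr(\mathbf{\Theta}^{2}) = D$ and $\sum_{d=1}^{D}\theta_{dd}\boldsymbol{\mu}_{d}\boldsymbol{\mu}_{d}^{T} = \sum_{d=1}^{D}\boldsymbol{\mu}_{d}\boldsymbol{\mu}_{d}^{T} = \boldsymbol{\mu}\boldsymbol{\mu}^{T}$. I do not anticipate any genuine obstacle: the result is a direct corollary and involves no new analysis. The only points requiring a line of justification are pulling the common $\mathbf{\Sigma}\otimes\mathbf{\Sigma}$ and $\Vec\mathbf{\Sigma}\Vec^{T}\mathbf{\Sigma}$ factors out of the sum and recognising that the noncentrality blocks assemble into the full $\boldsymbol{\mu}\boldsymbol{\mu}^{T}$; both are elementary and already used in the proof of Theorem \ref{teo04}.
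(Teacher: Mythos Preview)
Your proposal is correct and mirrors the paper's own proof, which simply states that the result is obtained from Theorem \ref{teo04}. Your additional detailed route via Corollary \ref{cor01} and the explicit substitutions $\tr(\mathbf{\Theta})=\tr(\mathbf{\Theta}^{2})=D$, $\sum_{d}\boldsymbol{\mu}_{d}\boldsymbol{\mu}_{d}^{T}=\boldsymbol{\mu}\boldsymbol{\mu}^{T}$ just spells out what the paper leaves implicit.
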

\begin{proof}
This is obtained from Theorem \ref{teo04}.
\end{proof}

\begin{corollary}
In particular if $\mathbf{Y} \sim \mathcal{N}_{K \times D}^{(K-1),D}(\boldsymbol{\mu},
\mathbf{\Sigma} \otimes \mathbf{I}_{D})$. Then, $c_{0} = \kappa_{0} = 1$, and thus
\begin{eqnarray*}
% \nonumber to remove numbering (before each equation)
  E(\mathbf{B}) &=& D \mathbf{\Sigma} + \boldsymbol{\mu}\boldsymbol{\mu}^{T}\\
  \cov(\Vec \mathbf{B}) &=& (\mathbf{I}_{K^{2}}+\mathbf{K}_{K})[D (\mathbf{\Sigma} \otimes \mathbf{\Sigma}) +
  \boldsymbol{\mu}\boldsymbol{\mu}^{T}_{d}  \otimes \mathbf{\Sigma} + \mathbf{\Sigma} \otimes
      \boldsymbol{\mu}\boldsymbol{\mu}^{T}].
\end{eqnarray*}
\end{corollary}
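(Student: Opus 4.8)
The plan is to treat this corollary as a direct specialisation of the immediately preceding theorem (the independent case with $\mathbf{\Theta} = \mathbf{I}_{D}$), so that no fresh computation is required. First I would observe that the matrix multivariate Gaussian law $\mathcal{N}_{K \times D}^{(K-1),D}(\boldsymbol{\mu}, \mathbf{\Sigma} \otimes \mathbf{I}_{D})$ is precisely the Gaussian member of the elliptical family covered by that theorem, so its hypotheses apply verbatim once the two generator constants are identified.

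Next I would read off from Table \ref{table1} that the Gaussian distribution has $c_{0} = \kappa_{0} = 1$, and substitute these two values into the theorem's two displayed formulae. For the first moment this turns $E(\mathbf{B}) = D c_{0}\mathbf{\Sigma} + \boldsymbol{\mu}\boldsymbol{\mu}^{T}$ into $D\mathbf{\Sigma} + \boldsymbol{\mu}\boldsymbol{\mu}^{T}$ at once.

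For the covariance the substitution acts term by term: the leading coefficient becomes $D\kappa_{0} = D$, the factor $c_{0}$ multiplying the two Kronecker cross terms becomes $1$, and the coefficient $D(\kappa_{0} - c_{0}^{2})$ of $\Vec\mathbf{\Sigma}\,\Vec^{T}\mathbf{\Sigma}$ collapses to $D(1-1) = 0$, so that contribution drops out entirely. The one point worth flagging is this last cancellation: it is the distinguishing feature of the Gaussian case and explains why the stated covariance carries no $\Vec\mathbf{\Sigma}\,\Vec^{T}\mathbf{\Sigma}$ term, unlike the general elliptical formula. There is no genuine obstacle here; the only care required is to confirm that the Gaussian generator forces $\kappa_{0} = c_{0}^{2}$, which holds since both equal $1$.
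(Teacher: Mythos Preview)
Your proposal is correct and matches the paper's own treatment: the corollary is stated without proof, as an immediate specialisation of the preceding theorem upon inserting the Gaussian values $c_{0}=\kappa_{0}=1$ from Table~\ref{table1}, which is exactly what you do.
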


\subsection{Method-of-moments estimators}

Returning to our notation, for which, rewrite, $\mathbf{\Theta} = \mathbf{\Sigma}_{D}$,
$\mathbf{\Sigma} = \mathbf{\Sigma}_{K}^{*}$ and $\boldsymbol{\mu} = \boldsymbol{\mu}^{*}$.

Our target is to find the method-of-moments estimators of the parameter matrices
$\mathbf{\Sigma}_{K}^{*}$ and $\boldsymbol{\mu}^{*}$. First, note that the first two sample
moments estimators of $\mathbf{B}$ are given by
$$
  \widetilde{E(\mathbf{B})}=  \frac{1}{n}\sum_{i = 1}^{n} \mathbf{B}_{i} = \bar{\mathbf{B}} =
  (\bar{b}_{ij}), \quad i,j = 1,\dots,K,
$$
and
$$
  \widetilde{\cov(\Vec \mathbf{B})} = \frac{1}{n}\sum_{i = 1}^{n} (\Vec \mathbf{B}_{i}^{T}-
  \Vec \widetilde{E(\mathbf{B})})(\Vec \mathbf{B}_{i}^{T}- \Vec \widetilde{E(\mathbf{B})})^{T} = \mathbf{S}.
$$
where $\mathbf{S} = (s_{_{tr}})$, $t,r = 1,2,\dots,K^{2}$. In addition note that for $i \leq j$
and $\mathbf{M} = \boldsymbol{\mu}^{*} \boldsymbol{\mu}^{*T} = (m_{ij}) = \mathbf{M}^{T}$ and
$\mathbf{\Sigma}_{K}^{*} = (\sigma_{ij})$, we have
\begin{eqnarray}
% \nonumber to remove numbering (before each equation)
  E(b_{ij}) &=& E(\mathbf{e}_{i}^{T}\mathbf{B}\mathbf{e}_{j}) = \mathbf{e}_{i}^{T}E(\mathbf{B})\mathbf{e}_{j} \nonumber\\
  \label{egwd}
   &=& \mathbf{e}_{i}^{T}(D c_{0}\mathbf{\Sigma}_{K}^{*} +
    \boldsymbol{\mu}^{*}\boldsymbol{\mu}^{*T})\mathbf{e}_{j} = D c_{0}\sigma_{ij}+ m_{ij},
\end{eqnarray}
for independent and dependent cases.

\subsubsection{Dependent case}

Note that:
\begin{eqnarray*}
% \nonumber to remove numbering (before each equation)
  \cov(b_{ij}) &=& \cov(\mathbf{e}_{i}^{T}\mathbf{B}\mathbf{e}_{j}) = \cov(\Vec\mathbf{e}_{i}^{T}\mathbf{B}\mathbf{e}_{j})
   = \cov((\mathbf{e}_{j} \otimes \mathbf{e}_{i})^{T}\Vec \mathbf{B}) \\
   &=& (\mathbf{e}_{j} \otimes \mathbf{e}_{i})^{T}\cov(\Vec \mathbf{B}) (\mathbf{e}_{j} \otimes \mathbf{e}_{i})\\
   &=& (\mathbf{e}_{j} \otimes \mathbf{e}_{i})^{T}\{\left(\mathbf{I}_{K^{2}}+\mathbf{K}_{K}\right)
   \left\{ D \kappa_{0}(\mathbf{\Sigma}_{K}^{*} \otimes \mathbf{\Sigma}_{K}^{*}) \right. \\
   & & \left . + \ c_{0}\left[\boldsymbol{\mu}^{*}\boldsymbol{\mu}^{* T} \otimes \mathbf{\Sigma}_{K}^{*} +
      \mathbf{\Sigma}_{K}^{*} \otimes \boldsymbol{\mu}^{*} \boldsymbol{\mu}^{* T}\right]\right\}\\
   && + \ D\left[\kappa_{0} - D c_{0}^{2} \right] \Vec \mathbf{\Sigma}_{K}^{*}
   \Vec^{T}\mathbf{\Sigma}_{K}^{*}\\
   &&  + \ (1-D)c_{0}[\Vec \mathbf{\Sigma}_{K}^{*} \Vec^{T}\boldsymbol{\mu}^{*}\boldsymbol{\mu}^{T* }\\
   && \left . + \ \Vec \boldsymbol{\mu}^{*}\boldsymbol{\mu}^{* T}\Vec^{T} \mathbf{\Sigma}_{K}^{*}]\right\}(\mathbf{e}_{j} \otimes
      \mathbf{e}_{i}).
\end{eqnarray*}
Observing that $(\mathbf{e}_{j} \otimes \mathbf{e}_{i})^{T}\mathbf{K}_{K} = (\mathbf{e}_{i}
\otimes \mathbf{e}_{j})^{T}$ and that $(\mathbf{A} \otimes \mathbf{B})(\mathbf{C} \otimes
\mathbf{D}) = (\mathbf{AC} \otimes \mathbf{BD})$, for $i \leq j$, $i, j = 1,2,\dots,K$
\begin{eqnarray}
% \nonumber to remove numbering (before each equation)
   \cov(b_{ij}) &=& D\left[\kappa_{0}\sigma_{ii}\sigma_{jj} + (2\kappa_{0} - c_{0}^{2})\sigma_{ij}^{2}\right] \nonumber\\
   \label{vgwd}
   && + \ c_{0} \left[m_{jj}\sigma_{ii} + m_{ii}\sigma_{jj}+ 2(2-D)m_{ij}\sigma_{ij}\right].
\end{eqnarray}
From (\ref{egwd}),  by replacing $m_{ij} = \bar{b}_{ij}-Dc_{0}
\sigma_{ij}$  in (\ref{vgwd}) we have
\begin{eqnarray}
% \nonumber to remove numbering (before each equation)
  \cov(b_{ij}) &=& D (\kappa_{0}-2c_{0}^{2}) \sigma_{ii}\sigma_{jj} + D(2\kappa_{0} - (1+ 2(2-D)) c_{0}^{2})\sigma_{ij}^{2} \nonumber\\
  \label{sem11}
   && + \ c_{0} \left[\bar{b}_{jj}\sigma_{ii}+ \bar{b}_{ii} \sigma_{jj} + 2(2-D)\bar{b}_{ij}\sigma_{ij}\right].
\end{eqnarray}
Therefore equaling (\ref{sem11}) to $s_{ij} =\widetilde{
\cov(b_{ij})}$ we have:
\begin{theorem}\label{th:6}
Assume that $\mathbf{B}\sim \mathcal{GPW}_{K}^{q}(D, \mathbf{\Sigma}_{K}^{*}, \mathbf{I}_{D},
\mathbf{\Omega},h)$. Then, the method-of-moments estimators of $\mathbf{\Sigma}_{K}^{*}$ and
$\mathbf{M}=\boldsymbol{\mu}^{*}\boldsymbol{\mu}^{*T}$ are given by the following exact
expressions.

\textbf{For $i=1,2,\ldots,K$:}
\begin{equation}\label{sigmaii}
    \widetilde{\sigma}_{ii}=\frac{\sqrt{Q_{ii}^{2}+4Ps_{ii}}-Q_{ii}}{2P},
\end{equation}
where $Q_{ii}^{2}+4Ps_{ii}\geq 0$,
$P=D(\kappa_{0}-2c_{0}^{2})+D(2\kappa_{0}-(1+2(2-D))c_{0}^{2})$, and
$Q_{ii}=2c_{0}(3-D)\overline{b}_{ii}$.
\begin{equation}\label{mii}
    \widetilde{m}_{ii}=\overline{b}_{ii}-Dc_{0}\widetilde{\sigma}_{ii},
\end{equation}
where $\widetilde{\sigma}_{ii}$ has been previously found in (\ref{sigmaii}).

If $P=0$,  then $\widetilde{\sigma}_{ii}=s_{ii}/Q_{ii}$.

\textbf{For $i<j$, $i=1,\ldots,(K-1),j=2,\ldots,K$:}
\begin{equation}\label{sigmaij}
    \widetilde{\sigma}_{ij}=\frac{\sqrt{(2-D)^{2}c_{0}^{2}\overline{b}_{ij}^{2}-R(T_{ij}-s_{ij})}-(2-D)c_{0}\overline{b}_{ij}}{R},
\end{equation}
where $(2-D)^{2}c_{0}^{2}\overline{b}_{ij}^{2}-R(T_{ij}-s_{ij})\geq
0$, $R=D(2\kappa_{0}-(1+2(2-D))c_{0}^{2})$, and
$$
 T_{ij}=D(\kappa_{0}-2c_{0}^{2})\widetilde{\sigma}_{ii}\widetilde{\sigma}_{jj}+c_{0}(\overline{b}_{jj}\widetilde{\sigma}_{ii}
+ \overline{b}_{ii}\widetilde{\sigma}_{jj}).
$$
Here $\widetilde{\sigma}_{ii}$ and $\widetilde{\sigma}_{jj}$ were previously computed in
(\ref{sigmaii}).
\begin{equation}\label{mii2}
    \widetilde{m}_{ij}=\overline{b}_{ij}-Dc_{0}\widetilde{\sigma}_{ij},
\end{equation}

Denote the solution as
$$
(\widetilde{\mathbf{M}},\widetilde{\mathbf{\Sigma}}_{K}^{*}).
$$
Note that $s_{ij}=\widetilde{Cov(b_{ij})}$, this is $s_{ij}$ are obtained from the diagonal of
matrix $\mathbf{S}\in \mathfrak{R}^{K^{2}\times K^{2}}$.

If $R=0$, then
$\widetilde{\sigma}_{ij}=\left(s_{ij}-T_{ij}\right)/\left(2(2-D)c_{0}\overline{b}_{ij}\right)$.
\end{theorem}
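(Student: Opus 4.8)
The plan is to apply the method of moments: equate the first two sample moments $\bar{b}_{ij}$ and $s_{ij}$ to the population expressions established above, and solve the resulting equations for the entries $\sigma_{ij}$ of $\mathbf{\Sigma}_K^*$ and $m_{ij}$ of $\mathbf{M}$. The crucial structural observation is that, after the noncentrality entries have already been eliminated through (\ref{egwd}), the covariance formula (\ref{sem11}) is a \emph{quadratic} polynomial in the entries of $\mathbf{\Sigma}_K^*$. This reduces the whole problem to scalar quadratic equations solvable in closed form, and it dictates a natural two-stage order: diagonal entries first, off-diagonal entries afterwards.

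First I would treat the diagonal entries. Setting $i=j$ in (\ref{sem11}) collapses the quadratic terms, since $\sigma_{ii}\sigma_{jj}$ and $\sigma_{ij}^{2}$ both become $\sigma_{ii}^{2}$, while the linear bracket $\bar{b}_{jj}\sigma_{ii}+\bar{b}_{ii}\sigma_{jj}+2(2-D)\bar{b}_{ij}\sigma_{ij}$ collapses to $2(3-D)\bar{b}_{ii}\sigma_{ii}$. Collecting terms yields the quadratic $P\sigma_{ii}^{2}+Q_{ii}\sigma_{ii}-s_{ii}=0$, where $P$ is the sum of the two $\sigma_{ii}^{2}$-coefficients from (\ref{sem11}) and $Q_{ii}=2c_{0}(3-D)\bar{b}_{ii}$. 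The quadratic formula produces (\ref{sigmaii}); the positive root is selected because a variance estimator must be nonnegative, and the degenerate linear case $P=0$ gives $\widetilde{\sigma}_{ii}=s_{ii}/Q_{ii}$. Back-substitution into (\ref{egwd}) then gives $\widetilde{m}_{ii}$ as in (\ref{mii}).

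Next I would handle the off-diagonal entries $i<j$. The key point is that $\widetilde{\sigma}_{ii}$ and $\widetilde{\sigma}_{jj}$ are already known from the first stage, so in (\ref{sem11}) every term not carrying a factor of $\sigma_{ij}$ is a known constant; gathering these into $T_{ij}$ leaves $s_{ij}=T_{ij}+R\sigma_{ij}^{2}+2(2-D)c_{0}\bar{b}_{ij}\sigma_{ij}$, a quadratic in the single unknown $\sigma_{ij}$ with $R=D(2\kappa_{0}-(1+2(2-D))c_{0}^{2})$. Applying the quadratic formula and again choosing the positive root yields (\ref{sigmaij}), with the linear fallback $\widetilde{\sigma}_{ij}=(s_{ij}-T_{ij})/(2(2-D)c_{0}\bar{b}_{ij})$ when $R=0$, and then (\ref{mii2}) follows from (\ref{egwd}).

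Since the algebra is routine bookkeeping, the genuine subtleties are not computational. The first is the \emph{sequential} dependence already noted: the off-diagonal equations reach back to the diagonal estimates through $T_{ij}$, so the order of solution cannot be reversed. The second, and the point I would expect to require the most care, is the \emph{reality} of the estimators, governed by the discriminant conditions $Q_{ii}^{2}+4Ps_{ii}\geq 0$ and $(2-D)^{2}c_{0}^{2}\bar{b}_{ij}^{2}-R(T_{ij}-s_{ij})\geq 0$. These are exactly the hypotheses stated in the theorem, and they mark the one place where a given sample can fail to yield a real-valued moment estimator; verifying that the chosen root is the admissible one under these conditions is where I would concentrate the argument.
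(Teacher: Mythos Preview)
Your proposal is correct and matches the paper's approach exactly: the paper derives (\ref{egwd}) and (\ref{sem11}), then states ``Therefore equaling (\ref{sem11}) to $s_{ij}=\widetilde{\cov(b_{ij})}$ we have'' and records the theorem, which is precisely your two-stage quadratic solve (diagonal entries first, then off-diagonal with $\widetilde{\sigma}_{ii},\widetilde{\sigma}_{jj}$ plugged into $T_{ij}$). Your caution about root selection is echoed in the paper's remark immediately following the theorem, which warns that the sign of the square root and the constants $P,Q_{ii},R,T_{ij}$ must be checked against the particular model and sample statistics.
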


\begin{remark}
 Special attention must be payed on the constants $P$, $Q_{ii}$, $R$ and $T_{ij}$, and the sign of
the square root, according to the selected model and the sample statistics $s_{ij}$ and
$\overline{b}_{ij}$.
\end{remark}

\subsubsection{Independent case}

For this case,
\begin{eqnarray*}
% \nonumber to remove numbering (before each equation)
  \cov(b_{ij}) &=& \cov(\mathbf{e}_{i}^{T}\mathbf{B}\mathbf{e}_{j}) = \cov(\Vec\mathbf{e}_{i}^{T}\mathbf{B}\mathbf{e}_{j})
   = \cov((\mathbf{e}_{j} \otimes \mathbf{e}_{i})^{T}\Vec \mathbf{B}) \\
   &=& (\mathbf{e}_{j} \otimes \mathbf{e}_{i})^{T}\cov(\Vec \mathbf{B}) (\mathbf{e}_{j} \otimes \mathbf{e}_{i})\\
   &=& (\mathbf{e}_{j} \otimes \mathbf{e}_{i})^{T}\{(\mathbf{I}_{K^{2}}+\mathbf{K}_{K})[D \kappa_{0}
      (\mathbf{\Sigma}_{K}^{*} \otimes \mathbf{\Sigma}_{K}^{*})\\
   && + \ c_{0}(\boldsymbol{\mu}^{*}\boldsymbol{\mu}^{*T}  \otimes \mathbf{\Sigma}_{K}^{*} + \mathbf{\Sigma}_{K}^{*} \otimes
      \boldsymbol{\mu}^{*}\boldsymbol{\mu}^{*T})]\\
   && + \ D(\kappa_{0} - c^{2}_{0})\Vec \mathbf{\Sigma}_{K}^{*} \Vec^{T}\mathbf{\Sigma}_{K}^{*} \} (\mathbf{e}_{j} \otimes
      \mathbf{e}_{i}).
\end{eqnarray*}
Hence
\begin{equation}\label{vgwi}
    \cov(b_{ij}) = D\left[\kappa_{0}\sigma_{ii}\sigma_{jj} + (2\kappa_{0} - c_{0}^{2})\sigma_{ij}^{2}\right]
       + c_{0} \left(m_{jj}\sigma_{ii} + m_{ii}\sigma_{jj}+ 2m_{ij}\sigma_{ij}\right).
\end{equation}
From (\ref{egwd}),  by substituting $m_{ij} = s_{ij}-Dc_{0}
\sigma_{ij}$  in (\ref{vgwi}) we have
$$
  \cov(b_{ij}) = D (\kappa_{0}-2c_{0}^{2}) \sigma_{ii}\sigma_{jj} + D(2\kappa_{0} -3c_{0}^{2})\sigma_{ij}^{2}
       + c_{0} \left[\bar{b}_{jj}\sigma_{ii}+ \bar{b}_{ii} \sigma_{jj} + 2\bar{b}_{ij}\sigma_{ij}\right].
$$
Summarising

\begin{theorem}\label{th:7}
Assume that $\mathbf{y}_{d} \sim \mathcal{E}_{K}^{(K-1)}(\boldsymbol{\mu}_{d}, \mathbf{\Sigma};
h)$, independently, for $d = 1, 2, \dots,D$, such that
$$
  \mathbf{Y} = (\mathbf{y}_{1}|\mathbf{y}_{2}| \cdots |\mathbf{y}_{D} ) \mbox{ and }
  \boldsymbol{\mu} = (\boldsymbol{\mu}_{1}|\boldsymbol{\mu}_{2}| \cdots |\boldsymbol{\mu}_{D}
  ),
$$
and let
$$
    \mathbf{B} = \mathbf{Y}\mathbf{Y}^{T} = \sum_{d = 1}^{D} \mathbf{y}_{d}
    \mathbf{y}_{d}^{T}.
$$ Then, the method-of-moments estimators of $\mathbf{\Sigma}_{K}^{*}$ and
$\mathbf{M}=\boldsymbol{\mu}^{*}\boldsymbol{\mu}^{*T}$ are given by the following exact
expressions.

\textbf{For $i=1,2,\ldots,K$:}
\begin{equation}\label{sigmaiii}
    \widetilde{\sigma}_{ii}=\frac{\sqrt{Q_{ii}^{2}+4Ps_{ii}}-Q_{ii}}{2P},
\end{equation}
where $Q_{ii}^{2}+4Ps_{ii}\geq 0$, $P=D(3\kappa_{0}-5c_{0}^{2})$,
and $Q_{ii}=4c_{0}\overline{b}_{ii}$.
\begin{equation}\label{miiI}
    \widetilde{m}_{ii}=\overline{b}_{ii}-Dc_{0}\widetilde{\sigma}_{ii},
\end{equation}
where $\widetilde{\sigma}_{ii}$ has been previously found in (\ref{sigmaiii}).

If $P=0$,  then $\widetilde{\sigma}_{ii}=s_{ii}/Q_{ii}$.

\medskip

\textbf{For $i<j$, $i=1,\ldots,(K-1),j=2,\ldots,K$:}
\begin{equation}\label{sigmaiji}
    \widetilde{\sigma}_{ij}=\frac{\sqrt{c_{0}^{2}\overline{b}_{ij}^{2}-R(T_{ij}-s_{ij})}-c_{0}\overline{b}_{ij}}{R},
\end{equation}
where $\overline{b}_{ij}^{2}-R(T_{ij}-s_{ij})\geq 0$,
$R=D(2\kappa_{0}-3c_{0}^{2})$ and

$$
 T_{ij}=D(\kappa_{0}-2c_{0}^{2})\widetilde{\sigma}_{ii}\widetilde{\sigma}_{jj}+c_{0}\overline{b}_{jj}\widetilde{\sigma}_{ii}
+ c_{0}\overline{b}_{ii}\widetilde{\sigma}_{jj}.
$$
Here $\widetilde{\sigma}_{ii}$ and $\widetilde{\sigma}_{jj}$ were previously computed in
(\ref{sigmaiii}).
\begin{equation}\label{mii2I}
    \widetilde{m}_{ij}=\overline{b}_{ij}-Dc_{0}\widetilde{\sigma}_{ij},
\end{equation}
Denote the solution as
$$
(\widetilde{\mathbf{M}},\widetilde{\mathbf{\Sigma}}_{K}^{*}).
$$
Note that $s_{ij}=\widetilde{Cov(b_{ij})}$, this is $s_{ij}$ are obtained from the diagonal of
matrix $\mathbf{S}\in \mathfrak{R}^{K^{2}\times K^{2}}$.

If $R=0$, then $\widetilde{\sigma}_{ij}=\left(s_{ij}-T_{ij}\right)/\left(2c_{0}\overline{b}_{ij}\right)$.
\end{theorem}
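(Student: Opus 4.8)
The plan is to obtain these estimators by the classical method of moments: equate the theoretical first and second moments of $\mathbf{B}$ to their sample counterparts $\bar{\mathbf{B}}$ and $\mathbf{S}$, and solve the resulting equations entry by entry. All the population moments needed are already in hand from Theorem \ref{teo04} and Corollary \ref{cor01} specialised to $\mathbf{\Theta}=\mathbf{I}_{D}$; in particular the first-moment relation (\ref{egwd}) and the entrywise covariance (\ref{vgwi}) have been reduced, after eliminating the $m$-terms through $m_{ij}=\bar b_{ij}-Dc_{0}\sigma_{ij}$, to the scalar identity
\[
  \cov(b_{ij})=D(\kappa_{0}-2c_{0}^{2})\sigma_{ii}\sigma_{jj}+D(2\kappa_{0}-3c_{0}^{2})\sigma_{ij}^{2}+c_{0}\bigl[\bar b_{jj}\sigma_{ii}+\bar b_{ii}\sigma_{jj}+2\bar b_{ij}\sigma_{ij}\bigr].
\]
Hence the substantive content is purely algebraic: reading off the $\sigma$-estimators from this relation and then recovering the $m$-estimators from (\ref{egwd}).

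First I would treat the diagonal entries. Setting $i=j$ collapses the identity into the scalar quadratic $D(3\kappa_{0}-5c_{0}^{2})\,\sigma_{ii}^{2}+4c_{0}\bar b_{ii}\,\sigma_{ii}=\cov(b_{ii})$. Equating $\cov(b_{ii})$ to the sample value $s_{ii}$ and abbreviating $P=D(3\kappa_{0}-5c_{0}^{2})$, $Q_{ii}=4c_{0}\bar b_{ii}$ gives $P\sigma_{ii}^{2}+Q_{ii}\sigma_{ii}-s_{ii}=0$; the quadratic formula, with the root chosen so that $\widetilde\sigma_{ii}$ is real and admissible, produces (\ref{sigmaiii}), while the degenerate case $P=0$ reduces to the linear solution $s_{ii}/Q_{ii}$. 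The companion $\widetilde m_{ii}$ in (\ref{miiI}) then follows directly from (\ref{egwd}) with $i=j$.

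Next, for $i<j$ I would regard $\widetilde\sigma_{ii}$ and $\widetilde\sigma_{jj}$ as already determined and isolate the $\sigma_{ij}$-dependence. Grouping the terms free of $\sigma_{ij}$ into $T_{ij}=D(\kappa_{0}-2c_{0}^{2})\widetilde\sigma_{ii}\widetilde\sigma_{jj}+c_{0}(\bar b_{jj}\widetilde\sigma_{ii}+\bar b_{ii}\widetilde\sigma_{jj})$ turns the identity into the quadratic $R\sigma_{ij}^{2}+2c_{0}\bar b_{ij}\sigma_{ij}+(T_{ij}-s_{ij})=0$ with $R=D(2\kappa_{0}-3c_{0}^{2})$; solving yields (\ref{sigmaiji}), and $R=0$ gives the stated linear fallback, with $\widetilde m_{ij}$ in (\ref{mii2I}) again coming from (\ref{egwd}). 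I expect the only genuinely delicate points to be the choice of sign in each square root, dictated by the requirement of a real non-negative variance estimate, and the discriminant conditions $Q_{ii}^{2}+4Ps_{ii}\ge 0$ and $c_{0}^{2}\bar b_{ij}^{2}-R(T_{ij}-s_{ij})\ge 0$; these are precisely the admissibility constraints recorded in the statement, and they must be checked against the observed statistics $s_{ij},\bar b_{ij}$ and the particular $(c_{0},\kappa_{0})$ of the chosen elliptical model.
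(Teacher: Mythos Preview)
Your proposal is correct and follows exactly the paper's approach: the paper derives the scalar identity $\cov(b_{ij})=D(\kappa_{0}-2c_{0}^{2})\sigma_{ii}\sigma_{jj}+D(2\kappa_{0}-3c_{0}^{2})\sigma_{ij}^{2}+c_{0}[\bar b_{jj}\sigma_{ii}+\bar b_{ii}\sigma_{jj}+2\bar b_{ij}\sigma_{ij}]$ by substituting $m_{ij}=\bar b_{ij}-Dc_{0}\sigma_{ij}$ into (\ref{vgwi}), and then simply writes ``Summarising'' before stating the theorem. You have spelled out the quadratic-formula step that the paper leaves implicit, so your write-up is in fact more explicit than the original.
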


\begin{remark}\label{rem3}
Recall that the method-of-moments estimators are not uniquely defined. In addition, if instead
of estimating the parameter $\theta$, method-of-moments estimator of, say, $g(\theta)$ is
desired, it can be obtained in several ways. One way would be to first find method-of-moments
estimator, say $\widetilde{\theta}$ of $\theta$ and then use $g(\widetilde{\theta})$ as an
estimator of $g(\theta)$. Alternatively, we can found the moments of function $g(\theta)$ and
then apply the method of moments to find the method-of-moments estimator
$\widetilde{g(\theta)}$ of $g(\theta)$. Estimators using either way are termed
method-of-moments estimators and may be not be the same in both cases, see \cite[Section 7.2.1,
p.276]{mgb:74}.
\end{remark}

The following result formalise the algorithm (Principal Coordinate Analysis, collected at
\cite{l:93}) for obtain $\boldsymbol{\mu}^{*}$, the estimated coordinates of the mean form (up
to translation, rotation, and reflection transformations) using the method-of-moments estimator
$\widetilde{\mathbf{M}}$.

\begin{theorem}\label{th:8}
Let $\widetilde{\mathbf{M}}$ the method-of-moments estimator of $\mathbf{M} =
\boldsymbol{\mu}^{*} \boldsymbol{\mu}^{*T}$ (for dependent or independent cases). Let
$\widetilde{\mathbf{M}} = \mathbf{V}_{1}\mathbf{L}\mathbf{V}_{1}^{T}$ is nonsingular part of
its spectral decomposition, where $\mathbf{V}_{1}$ is a semiorthogonal matrix, $\mathbf{V}_{1}
\in \Re^{K \times D}$ i.e. $\mathbf{V}_{1}^{T}\mathbf{V}_{1} = \mathbf{I}_{D}$ and $\mathbf{L}
= \diag(\lambda_{1}, \dots, \lambda_{D})$, with $D$ the rank of matrix
$\widetilde{\mathbf{M}}$. Then the method-of-moments estimator of $\boldsymbol{\mu}^{*}$ is
$$
  \widetilde{\boldsymbol{\mu}}^{*} = \mathbf{V}_{1}\mathbf{W},
$$
where $\mathbf{W} = \diag(\sqrt{\lambda_{1}}, \dots, \sqrt{\lambda_{D}})$.
\end{theorem}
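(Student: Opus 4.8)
The plan is to read Theorem~\ref{th:8} as the reconstruction (Principal Coordinate Analysis) step: the mean form $\boldsymbol{\mu}^{*}$ enters every moment of $\mathbf{B}$ only through the symmetric outer product $\mathbf{M}=\boldsymbol{\mu}^{*}\boldsymbol{\mu}^{*T}$ (see $E(\mathbf{B})$ in Theorem~\ref{teo03} and the companion expressions for $\cov(\Vec\mathbf{B})$). Consequently $\boldsymbol{\mu}^{*}$ is never identifiable by itself, but only up to a right orthogonal factor, since $(\boldsymbol{\mu}^{*}\mathbf{Q})(\boldsymbol{\mu}^{*}\mathbf{Q})^{T}=\boldsymbol{\mu}^{*}\boldsymbol{\mu}^{*T}$ for every $\mathbf{Q}\in\Re^{D\times D}$ with $\mathbf{Q}\mathbf{Q}^{T}=\mathbf{I}_{D}$; this is exactly the rotation/reflection indeterminacy intrinsic to a form. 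So I would interpret ``the method-of-moments estimator of $\boldsymbol{\mu}^{*}$'' as the task of producing a representative configuration whose Gram matrix equals the already-constructed estimator $\widetilde{\mathbf{M}}$ of Theorem~\ref{th:6} (dependent case) or Theorem~\ref{th:7} (independent case).

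The formal justification rests on the invariance of moment estimators recorded in Remark~\ref{rem3}: once $\widetilde{\mathbf{M}}$ is fixed, any matrix function of it is again a method-of-moments estimator of the corresponding function of $\mathbf{M}$. Since $\boldsymbol{\mu}^{*}$ is, up to orthogonal equivalence, a ``matrix square root'' of $\mathbf{M}$, I would take $\widetilde{\boldsymbol{\mu}}^{*}$ to be a square root of $\widetilde{\mathbf{M}}$ and show that $\mathbf{V}_{1}\mathbf{W}$ realises it. First I would note that $\widetilde{\mathbf{M}}$ is symmetric and, restricting to its nonsingular part, admits the spectral decomposition $\widetilde{\mathbf{M}}=\mathbf{V}_{1}\mathbf{L}\mathbf{V}_{1}^{T}$ with $\mathbf{V}_{1}^{T}\mathbf{V}_{1}=\mathbf{I}_{D}$ and $\mathbf{L}=\diag(\lambda_{1},\dots,\lambda_{D})$, the $\lambda_{i}$ being the $D=\rank(\widetilde{\mathbf{M}})$ nonzero eigenvalues. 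Setting $\mathbf{W}=\diag(\sqrt{\lambda_{1}},\dots,\sqrt{\lambda_{D}})$ and $\widetilde{\boldsymbol{\mu}}^{*}=\mathbf{V}_{1}\mathbf{W}$, a one-line verification gives
\[
\widetilde{\boldsymbol{\mu}}^{*}(\widetilde{\boldsymbol{\mu}}^{*})^{T}
=\mathbf{V}_{1}\mathbf{W}\mathbf{W}^{T}\mathbf{V}_{1}^{T}
=\mathbf{V}_{1}\mathbf{L}\mathbf{V}_{1}^{T}
=\widetilde{\mathbf{M}},
\]
because $\mathbf{W}\mathbf{W}^{T}=\mathbf{W}^{2}=\mathbf{L}$; hence $\widetilde{\boldsymbol{\mu}}^{*}$ has the prescribed estimated Gram matrix.

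Finally I would close the argument by showing that this choice is canonical in the only sense available. If $\mathbf{A}\in\Re^{K\times D}$ is any full-column-rank matrix with $\mathbf{A}\mathbf{A}^{T}=\widetilde{\mathbf{M}}$, then $\mathbf{A}$ and $\widetilde{\boldsymbol{\mu}}^{*}$ share the same column space, namely that spanned by $\mathbf{V}_{1}$, and a standard factorisation argument forces $\mathbf{A}=\widetilde{\boldsymbol{\mu}}^{*}\mathbf{Q}$ for some orthogonal $\mathbf{Q}$. Thus all admissible square roots differ only by the rotation/reflection that the form leaves free, so $\widetilde{\boldsymbol{\mu}}^{*}=\mathbf{V}_{1}\mathbf{W}$ is a legitimate method-of-moments estimator of $\boldsymbol{\mu}^{*}$. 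The hard part is not the algebra but the bookkeeping of identifiability: I must make sure that the $\widetilde{\mathbf{M}}$ delivered by Theorems~\ref{th:6}--\ref{th:7} is genuinely symmetric positive semidefinite of rank $D$ (so that $\mathbf{L}>0$ and the real roots $\sqrt{\lambda_{i}}$ exist), and that invoking Remark~\ref{rem3} legitimately converts an estimator of $\mathbf{M}$ into an estimator of the non-identifiable $\boldsymbol{\mu}^{*}$. The positive-semidefiniteness of the raw moment estimator is the point most likely to need a separate remark, or a projection of $\widetilde{\mathbf{M}}$ onto the cone of positive semidefinite matrices before the spectral step.
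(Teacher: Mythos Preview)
Your proposal is correct and takes essentially the same approach as the paper: the paper's entire proof is the single sentence ``It is follow from Remark~\ref{rem3},'' so your argument---invoking the invariance principle of Remark~\ref{rem3} to pass from $\widetilde{\mathbf{M}}$ to a matrix square root $\mathbf{V}_{1}\mathbf{W}$, then verifying $\mathbf{V}_{1}\mathbf{W}(\mathbf{V}_{1}\mathbf{W})^{T}=\widetilde{\mathbf{M}}$---is simply a fleshed-out version of that one-line justification. Your additional remarks on orthogonal indeterminacy and on the positive-semidefiniteness caveat go beyond what the paper records, but they are sound and do not depart from the paper's route.
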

\begin{proof}
It is follow from Remark \ref{rem3}.
\end{proof}

\begin{theorem}
Let $(\widetilde{\boldsymbol{\mu}}^{*}, \widetilde{\mathbf{\Sigma}}_{K}^{*})$ the method-of-moments
estimators of
$$
  (\boldsymbol{\mu}^{*}, \mathbf{\Sigma}_{K}^{*}).
$$
Then as $n \rightarrow \infty$
$$
  (\widetilde{\boldsymbol{\mu}}^{*}, \widetilde{\mathbf{\Sigma}}_{K}^{*})
  \rightarrow (\boldsymbol{\mu}^{*}, \mathbf{\Sigma}_{K}^{*})
  \qquad \mbox{in probability}.
$$
\end{theorem}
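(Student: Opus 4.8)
The plan is to reduce the claim to the weak law of large numbers combined with the continuous mapping theorem. First I would observe that, although the nuisance parameters $\mathbf{\Gamma}_i$ and $\mathbf{t}_i$ vary with $i$, the matrices $\mathbf{B}_1,\dots,\mathbf{B}_n$ are independent and \emph{identically} distributed: by (\ref{dBii}) each $\mathbf{B}_i$ has the same generalised singular pseudo-Wishart law $\mathcal{GPW}_K^q(D,\mathbf{\Sigma}_K^{*},\mathbf{I}_D,\mathbf{\Omega},h)$, independently of the noise parameters, while the $\mathbf{X}_i$ (hence the $\mathbf{B}_i$) are independent by hypothesis. Assuming $\kappa_0<\infty$, so that both $E(\mathbf{B})$ and $\cov(\Vec\mathbf{B})$ are finite, Khintchine's weak law of large numbers gives
\[
  \bar{\mathbf{B}} \xrightarrow{P} E(\mathbf{B})
  \quad\text{and}\quad
  \mathbf{S} \xrightarrow{P} \cov(\Vec\mathbf{B}),
\]
so in particular $\bar b_{ij}\xrightarrow{P} E(b_{ij})$ and the diagonal entries $s_{ij}\xrightarrow{P}\cov(b_{ij})$ for all $i,j$.

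Next I would note that the estimators of Theorems \ref{th:6} and \ref{th:7} are explicit deterministic functions of the sample quantities $(\bar b_{ij},s_{ij})$, namely (\ref{sigmaii})--(\ref{mii2}) in the dependent case and (\ref{sigmaiii})--(\ref{mii2I}) in the independent case. The population parameters $(\sigma_{ij},m_{ij})$ satisfy exactly the same algebraic relations with $(E(b_{ij}),\cov(b_{ij}))$ in place of $(\bar b_{ij},s_{ij})$, since those formulas were derived by solving the moment equations (\ref{egwd}) and (\ref{sem11}) (and their independent analogue). Writing $\widetilde\sigma_{ij}=g_{ij}(\bar b_{\cdot\cdot},s_{\cdot\cdot})$ and $\widetilde m_{ij}=\bar b_{ij}-Dc_0\widetilde\sigma_{ij}$ with $g_{ij}$ continuous at the true moment values, the continuous mapping theorem then yields $\widetilde{\mathbf{\Sigma}}_K^{*}\xrightarrow{P}\mathbf{\Sigma}_K^{*}$ and $\widetilde{\mathbf{M}}\xrightarrow{P}\mathbf{M}=\boldsymbol{\mu}^{*}\boldsymbol{\mu}^{*T}$.

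Finally, for the mean form I would invoke Theorem \ref{th:8}: since $\widetilde{\boldsymbol{\mu}}^{*}=\mathbf{V}_1\mathbf{W}$ is recovered from the nonsingular part of the spectral decomposition of $\widetilde{\mathbf{M}}$, and the map sending a symmetric positive-semidefinite matrix of rank $D$ to this square root is continuous (modulo the orthogonal transformation under which $\boldsymbol{\mu}^{*}$ is only identifiable, cf. the identifiability discussion of Section \ref{sec2}), consistency of $\widetilde{\mathbf{M}}$ transfers to consistency of $\widetilde{\boldsymbol{\mu}}^{*}$ up to rotation and reflection, which is the strongest sense in which $\boldsymbol{\mu}^{*}$ can be estimated.

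The main obstacle is the continuity and branch-selection issue in the square-root formulas. The estimators involve $\sqrt{Q_{ii}^{2}+4Ps_{ii}}$ and $\sqrt{(2-D)^{2}c_0^{2}\bar b_{ij}^{2}-R(T_{ij}-s_{ij})}$, so one must check that at the true parameter values the radicands are strictly positive and the denominators $P,R$ are nonzero, so that a neighbourhood of the true moments exists on which each $g_{ij}$ is a single continuous branch and the degenerate cases $P=0$, $R=0$ do not arise in the limit; and one must verify that the particular root written in (\ref{sigmaii}) and (\ref{sigmaij}) is precisely the one equal to the true $\sigma_{ij}$, so the selected solution converges to the correct value rather than to a spurious root. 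Once such a neighbourhood is identified, the continuous mapping theorem closes the argument, and attention to the sign of the square root — flagged in the remark following Theorem \ref{th:6} — is exactly what guarantees the correct branch is taken.
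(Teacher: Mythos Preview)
Your proposal is correct and follows essentially the same approach as the paper: the paper's proof consists of a single sentence invoking the consistency of sample moments together with the continuity of $(\boldsymbol{\mu}^{*},\mathbf{\Sigma}_{K}^{*})$ as a function of $(E(\mathbf{B}),\cov(\Vec\mathbf{B}))$, citing \cite[Section 5d.1, p.~351]{r:73}. Your version is simply a more careful unpacking of that same idea, making explicit the i.i.d.\ structure of the $\mathbf{B}_i$, the continuous mapping theorem, and the branch-selection and rank issues that the paper leaves implicit.
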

\begin{proof}
This follows from the consistency of the sample moments and the continuity of the function
$(\boldsymbol{\mu}^{*}, \mathbf{\Sigma}_{K}^{*})$ in $(E(\mathbf{B}), \cov(\Vec \mathbf{B}))$, see
\cite[Section 5d.1, p. 351]{r:73}.
\end{proof}

\section{Consistent estimation  when $\mathbf{\Sigma}_{D}$ is a general non-negative definite matrix}\label{sec4}

Results in this section are motivated in the result obtained by \cite{d:99} under a matrix
multivariate Gaussian distribution via the maximum likelihood estimation. We make an heuristic
evaluation of the useful of these results in our approach based in method-of-moments
estimation.

Our algorithm is based in the following modified expressions:
\begin{eqnarray}
    % \nonumber to remove numbering (before each equation)
      \widetilde{\mathbf{\Sigma}}_{D} &=& \frac{1}{nK} \sum_{i = 1}^{n}\left (\mathbf{X}^{c}_{i}-
      \widetilde{\boldsymbol{\mu}}^{*} \right )^{T} (\widetilde{\mathbf{\Sigma}}_{K}^{*})^{-}
      \left (\mathbf{X}^{c}_{i}- \widetilde{\boldsymbol{\mu}}^{*} \right ),\\
      \widetilde{\mathbf{\Sigma}}_{K}^{*} &=& \frac{1}{nD}\sum_{i = 1}^{n} \left (\mathbf{X}^{c}_{i}-
      \widetilde{\boldsymbol{\mu}}^{*} \right ) \widetilde{\mathbf{\Sigma}}_{D}^{-1}
      \left (\mathbf{X}^{c}_{i}- \widetilde{\boldsymbol{\mu}}^{*} \right )^{T}.
\end{eqnarray}

\noindent \textsc{Algorithm}

\noindent \textsc{Initialisation}:\\ $r = 0$; $\mathbf{\Sigma}_{K}^{* r} =
\widetilde{\mathbf{\Sigma}}_{K}^{*}$; $\mathbf{\Sigma}_{D}^{r} = \displaystyle\frac{1}{nK}
\sum_{i = 1}^{n}\left (\mathbf{X}^{c}_{i}- \widetilde{\boldsymbol{\mu}}^{*} \right )^{T}
(\mathbf{\Sigma}_{K}^{* r})^{-} \left (\mathbf{X}^{c}_{i}- \widetilde{\boldsymbol{\mu}}^{*}
\right)$;\\
$r = r + 1$
$$
  \mathbf{\Sigma}_{K}^{* r+1} = \frac{1}{nD}\sum_{i = 1}^{n} \left (\mathbf{X}^{c}_{i}-
  \widetilde{\boldsymbol{\mu}}^{*} \right )\left(\mathbf{\Sigma}_{D}^{r}\right)^{-1}
  \left (\mathbf{X}^{c}_{i}- \widetilde{\boldsymbol{\mu}}^{*} \right
      )^{T};
$$
$$
  \mathbf{\Sigma}_{D}^{r+1} = \frac{1}{nK} \sum_{i =1}^{n}\left (\mathbf{X}^{c}_{i}-
  \widetilde{\boldsymbol{\mu}}^{*} \right )^{T} (\mathbf{\Sigma}_{K}^{* r})^{-} \left (\mathbf{X}^{c}_{i}-
  \widetilde{\boldsymbol{\mu}}^{*} \right);
$$
\textsc{While}\\
$||\mathbf{\Sigma}_{D}^{r+1} - \mathbf{\Sigma}_{D}^{r}||_{2} > \varepsilon_{1}$ or
$||\mathbf{\Sigma}_{K}^{* r+1} - \mathbf{\Sigma}_{K}^{*r}||_{2} > \varepsilon_{2}$,\\
\textsc{Repeat}:\\
\begin{tabular}{l}
  % after \\: \hline or \cline{col1-col2} \cline{col3-col4} ...
   $\qquad r = r+1$; \\
   $\qquad \mathbf{\Sigma}_{K}^{* r} = \mathbf{\Sigma}_{K}^{* r+1}$; \\
   $\qquad \mathbf{\Sigma}_{D}^{r} = \mathbf{\Sigma}_{D}^{r+1}$; \\
   $\qquad$\textsc{Recompute} $\mathbf{\Sigma}_{K}^{* r+1}$ and $\mathbf{\Sigma}_{D}^{r+1}$. \\
\end{tabular}\\
\textsc{Solutions are}:\\
$\widetilde{\mathbf{\Sigma}}_{K}^{*} = \mathbf{\Sigma}_{K}^{* r}$;
$\widetilde{\mathbf{\Sigma}}_{D}= \mathbf{\Sigma}_{D}^{r}$.

\medskip

Where $\varepsilon_{1}$ and $\varepsilon_{2}$ define two infinitesimal positive quantities and
$||\cdot||_{2}$ is the Euclidean norm, $\left( ||\mathbf{A}||_{2} =
\sqrt{\tr\left(\mathbf{AA}^{T}\right)}\right)$.

\begin{theorem} Let $(\widetilde{\boldsymbol{\mu}}^{*},
\widetilde{\mathbf{\Sigma}}_{K}^{*} \otimes \widetilde{\mathbf{\Sigma}}_{D})$ the
method-of-moments estimators of $(\mathbf{\Sigma}_{K}^{*},\boldsymbol{\mu}^{*} \otimes
\mathbf{\Sigma}_{D})$. Then as $n \rightarrow \infty$
$$
  (\widetilde{\boldsymbol{\mu}}^{*}, \widetilde{\mathbf{\Sigma}}_{K}^{*} \otimes \widetilde{\mathbf{\Sigma}}_{D})
  \rightarrow (\boldsymbol{\mu}^{*}, \mathbf{\Sigma}_{K}^{*} \otimes \mathbf{\Sigma}_{D})
  \qquad \mbox{in probability}.
$$
\end{theorem}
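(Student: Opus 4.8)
The plan is to follow the consistency argument of the preceding theorem, adapted to the fact that here $(\widetilde{\mathbf{\Sigma}}_{K}^{*}, \widetilde{\mathbf{\Sigma}}_{D})$ is defined \emph{implicitly}, as the output of the fixed-point iteration, rather than in closed form. First I would dispose of $\widetilde{\boldsymbol{\mu}}^{*}$: by the previous theorem $\widetilde{\mathbf{M}} \to \mathbf{M} = \boldsymbol{\mu}^{*}\boldsymbol{\mu}^{*T}$ in probability, and by Theorem~\ref{th:8} the passage $\widetilde{\mathbf{M}} \mapsto \widetilde{\boldsymbol{\mu}}^{*} = \mathbf{V}_{1}\mathbf{W}$ is a continuous map on the rank-$D$ non-negative definite matrices, so the continuous mapping theorem yields $\widetilde{\boldsymbol{\mu}}^{*} \to \boldsymbol{\mu}^{*}$ in probability (modulo the rotation/reflection indeterminacy inherent to the model).

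Next I would exhibit the population fixed point. Writing $\mathbf{Y}_{i} = \mathbf{X}_{i}^{c}\mathbf{\Gamma}_{i}^{T} \sim \mathcal{E}_{K\times D}^{(K-1),D}(\boldsymbol{\mu}^{*}\otimes\mathbf{\Sigma}_{K}^{*}\otimes\mathbf{\Sigma}_{D}, h)$ and using the elliptical second-moment identity that follows from $\cov(\Vec\mathbf{Y}_{i}) = c_{0}\,\mathbf{\Sigma}_{D}\otimes\mathbf{\Sigma}_{K}^{*}$, one obtains, for any fixed symmetric $\mathbf{A}$,
\begin{eqnarray*}
  E\!\left[(\mathbf{Y}_{i}-\boldsymbol{\mu}^{*})^{T} \mathbf{A}\,(\mathbf{Y}_{i}-\boldsymbol{\mu}^{*})\right]
    &=& c_{0}\,\tr(\mathbf{A}\,\mathbf{\Sigma}_{K}^{*})\,\mathbf{\Sigma}_{D},
\end{eqnarray*}
so that with $\mathbf{A} = (\mathbf{\Sigma}_{K}^{*})^{-}$ the right-hand side is $c_{0}(K-1)\,\mathbf{\Sigma}_{D}$, and symmetrically $E[(\mathbf{Y}_{i}-\boldsymbol{\mu}^{*})\mathbf{\Sigma}_{D}^{-1}(\mathbf{Y}_{i}-\boldsymbol{\mu}^{*})^{T}] = c_{0} D\,\mathbf{\Sigma}_{K}^{*}$. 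After absorbing the scalar $c_{0}$ and the factors $K$ against $K-1$ into the scale indeterminacy of the two Kronecker factors, these two identities say exactly that the true pair $(\mathbf{\Sigma}_{K}^{*}, \mathbf{\Sigma}_{D})$ is a fixed point of the population map $\Phi$ whose empirical version is iterated in the algorithm.

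I would then invoke the weak law of large numbers: each summand is a quadratic form in an elliptically distributed matrix with finite second moment, so for fixed matrix arguments the two sample averages converge in probability to their expectations, uniformly on compact sets of admissible $(\mathbf{\Sigma}_{K}^{*}, \mathbf{\Sigma}_{D})$; together with $\widetilde{\boldsymbol{\mu}}^{*}\to\boldsymbol{\mu}^{*}$ and Slutsky's theorem this gives convergence in probability of the empirical map to $\Phi$. To pass from convergence of the maps to convergence of their fixed points, I would show that $\Phi$ is a local contraction at $(\mathbf{\Sigma}_{K}^{*}, \mathbf{\Sigma}_{D})$ (i.e. the linearisation of the iteration has spectral radius below one there), so that the fixed point depends continuously on the map; a final application of the continuous mapping theorem then delivers $(\widetilde{\mathbf{\Sigma}}_{K}^{*}, \widetilde{\mathbf{\Sigma}}_{D})\to(\mathbf{\Sigma}_{K}^{*},\mathbf{\Sigma}_{D})$ and hence $\widetilde{\mathbf{\Sigma}}_{K}^{*}\otimes\widetilde{\mathbf{\Sigma}}_{D} \to \mathbf{\Sigma}_{K}^{*}\otimes\mathbf{\Sigma}_{D}$ in probability.

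The hard part — and the reason the section labels the construction heuristic — is twofold. Analytically, the iteration involves the generalised inverse $(\mathbf{\Sigma}_{K}^{*})^{-}$ of a rank-deficient matrix, so one must control this inverse under perturbation and verify that the iteration map is well-defined, differentiable, and contractive in a neighbourhood of the true value, while keeping track of the scalar indeterminacy of the factors. More seriously, the algorithm centres at $\widetilde{\boldsymbol{\mu}}^{*}$ and forms $\mathbf{X}_{i}^{c}\mathbf{A}(\mathbf{X}_{i}^{c})^{T}$ with $\mathbf{A}\neq\mathbf{I}_{D}$, whereas $E(\mathbf{X}_{i}^{c})=\boldsymbol{\mu}^{*}\mathbf{\Gamma}_{i}$ and $\mathbf{X}_{i}^{c}\mathbf{A}(\mathbf{X}_{i}^{c})^{T} = \mathbf{Y}_{i}\mathbf{\Gamma}_{i}\mathbf{A}\mathbf{\Gamma}_{i}^{T}\mathbf{Y}_{i}^{T}$; the nuisance rotations $\mathbf{\Gamma}_{i}$ cancel only when $\mathbf{A}\propto\mathbf{I}_{D}$, so in the general $\mathbf{\Sigma}_{D}$ case they do not disappear from the sample averages. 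Making the argument rigorous therefore requires either a prior alignment step that removes the $\mathbf{\Gamma}_{i}$ or an additional assumption under which $\frac{1}{n}\sum_{i}\mathbf{\Gamma}_{i}^{T}\mathbf{\Sigma}_{D}\mathbf{\Gamma}_{i}$ converges to $\mathbf{\Sigma}_{D}$; this is precisely the gap that the heuristic evaluation leaves open.
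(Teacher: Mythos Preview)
Your proposal is far more careful than the paper's own proof, which consists of a single sentence: ``This follows from Remark~\ref{rem3}.'' That remark merely records the general principle that a function of a method-of-moments estimator is again a method-of-moments estimator; the paper is content to label the output of the iteration a method-of-moments estimator and invoke consistency of such estimators without further justification. No fixed-point analysis, no verification that the population map has the true parameters as a fixed point, and no treatment of the generalised inverse or the nuisance rotations appears there.

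What you have done instead is attempt to \emph{supply} the argument the paper omits, and in doing so you have correctly located the substantive difficulty: the sample averages in the algorithm involve $\mathbf{X}_{i}^{c}\mathbf{A}(\mathbf{X}_{i}^{c})^{T}$ with $\mathbf{A}$ depending on the current iterate for $\mathbf{\Sigma}_{D}$, and since $\mathbf{X}_{i}^{c} = \mathbf{Y}_{i}\mathbf{\Gamma}_{i}$ the nuisance rotations do not cancel unless $\mathbf{A}$ commutes with every $\mathbf{\Gamma}_{i}$. This is precisely why the paper introduces Section~\ref{sec4} with the caveat that it is a ``heuristic evaluation''; your identification of the gap is accurate and is not a flaw in your reasoning but a genuine lacuna in the result as stated. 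Your contraction/continuous-dependence route is the natural way one would try to make the claim rigorous, and the obstacles you list (rank-deficient generalised inverse, scale indeterminacy, uncancelled $\mathbf{\Gamma}_{i}$) are the right ones.
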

\begin{proof}
This follows from Remark \ref{rem3}.
\end{proof}

\section{Estimation of the form difference}\label{sec5}

A detailed discussion of Euclidean Distance Matrix, matrix form, form difference and their probabilistic,
geometrical, etc. properties may be found in \cite{l:91, l:93}. For your convenience, next we shall
introduce some notation, although in general we adhere to standard notation forms.

Consider the following square symmetric matrix, know as Euclidean Distance Matrix:
$$
  \mathbf{F}(\mathbf{X}) =
  \left (
  \begin{array}{ccccc}
    0 & d(1,2) & \dots & d(1,K-1) & d(1,K) \\
    d(2,1) & 0 & \dots & d(2,K-1) & d(2,K) \\
    \vdots & \vdots & \vdots & \ddots & \vdots \\
    d(K,1) & d(K,2) & \dots & d(K,K-1) & 0
  \end{array}
  \right ),
$$
where $d(i,j)$ denotes the Euclidean distance between landmarks $i$ and $j$, in shape theory
such matrix is termed \textit{form matrix}. Among others interesting properties of form matrix,
\cite{l:91} proves that $\mathbf{F}(\mathbf{X})$ is a maximal invariant under the group of
transformations consisting of translation, rotation, and reflection. Therefor,
$\mathbf{F}(\mathbf{X})$ retains all the relevant information about the form of an object.

Let $\mathbf{X}_{1}, \mathbf{X}_{2}, \dots, \mathbf{X}_{n}$ be $n$ independent observation from
population I and  $\mathbf{Y}_{1}, \mathbf{Y}_{2}, \dots, \mathbf{Y}_{m}$ be $m$ independent
observation from population II. Let the mean form of population I be
$\boldsymbol{\mu}^{\mathbf{X}}$ with the corresponding form matrix
$\mathbf{F}\left(\boldsymbol{\mu}^{\mathbf{X}}\right)$ and corresponding parameters for
population II be $\boldsymbol{\mu}^{\mathbf{Y}}$ and
$\mathbf{F}\left(\boldsymbol{\mu}^{\mathbf{Y}}\right)$. From \cite{l:93} we have the following
definition:
\begin{definition}
Form difference between population I and II is defined as
$$
  \mathbf{FDM}\left(\boldsymbol{\mu}^{\mathbf{X}}, \boldsymbol{\mu}^{\mathbf{Y}}\right) =
  \mathbf{F}\left(\boldsymbol{\mu}^{\mathbf{X}}\right) \ast
  \mathbf{F}\left(\boldsymbol{\mu}^{\mathbf{Y}}\right)^{-H},
$$
where $\ast$ denotes the Hadamard product, $0/0 = 0$ and $\mathbf{A}^{-H}$ denotes the inverse
of $\mathbf{A}$ with respect to the Hadamard product, a formula for such inverse in terms of
the usual product is given in \cite{clb:12}.
\end{definition}

From remark \ref{rem3}, the following theorem shows that the form difference between two
populations can be estimated consistently when landmarks are perturbed dependently along each
axis but independently or not correlated between the axes.
\begin{theorem}
Let $\left(\boldsymbol{\mu}^{\mathbf{X}}, \mathbf{\Sigma}_{K\mathbf{X}}^{*} \otimes
\mathbf{\Sigma}_{D\mathbf{X}} \right)$ and $\left(\boldsymbol{\mu}^{\mathbf{Y}},
\mathbf{\Sigma}_{K\mathbf{Y}}^{*} \otimes \mathbf{\Sigma}_{D\mathbf{Y}} \right)$ be the
parameters for the two populations. If $\mathbf{\Sigma}_{D\mathbf{X}} =
\mathbf{\Sigma}_{D\mathbf{Y}} = \mathbf{I}_{D}$, then
$$
  \widetilde{\mathbf{FDM}}\left(\widetilde{\boldsymbol{\mu}}^{\mathbf{X}}, \widetilde{\boldsymbol{\mu}}^{\mathbf{Y}}\right) =
  \widetilde{\mathbf{F}}\left(\widetilde{\boldsymbol{\mu}}^{\mathbf{X}}\right) \ast
  \widetilde{\mathbf{F}}\left(\widetilde{\boldsymbol{\mu}}^{\mathbf{Y}}\right)^{-H}
  \Rightarrow \mathbf{FDM}\left(\boldsymbol{\mu}^{\mathbf{X}},
  \boldsymbol{\mu}^{\mathbf{Y}}\right) \quad \mbox{in probability.}
$$
\end{theorem}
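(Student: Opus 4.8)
The plan is to realise the estimated form-difference matrix as the image of the consistent method-of-moments estimators under a map that is continuous at the true parameter, and then invoke the continuous mapping theorem for convergence in probability, in the form of \cite[Section 5d.1, p. 351]{r:73}.

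First I would record the starting point. Because $\mathbf{\Sigma}_{D\mathbf{X}} = \mathbf{\Sigma}_{D\mathbf{Y}} = \mathbf{I}_{D}$, the situation of Section \ref{sec3} applies to each population, so by the consistency of the sample moments $\bar{\mathbf{B}}$ and $\mathbf{S}$ together with the continuity of the method-of-moments map (as in the consistency theorem closing Section \ref{sec3} and in Remark \ref{rem3}), the Gram estimators satisfy
$$
  \widetilde{\mathbf{M}}^{\mathbf{X}} \to \mathbf{M}^{\mathbf{X}} = \boldsymbol{\mu}^{\mathbf{X}}(\boldsymbol{\mu}^{\mathbf{X}})^{T}
  \quad \mbox{in probability as } n \to \infty,
$$
and likewise $\widetilde{\mathbf{M}}^{\mathbf{Y}} \to \mathbf{M}^{\mathbf{Y}}$ in probability as $m \to \infty$. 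I would work with these Gram matrices rather than with $\widetilde{\boldsymbol{\mu}}^{*}$ directly, since $\mathbf{M} = \boldsymbol{\mu}^{*}\boldsymbol{\mu}^{*T}$ is already invariant under the rotation and reflection ambiguity left by Theorem \ref{th:8}.

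Next I would exhibit the continuity chain. Writing $\mathbf{M} = (m_{ij})$, the squared interlandmark distances of a centred configuration satisfy $d(i,j)^{2} = m_{ii} + m_{jj} - 2m_{ij}$, so each entry of the form matrix is $d(i,j) = (m_{ii}+m_{jj}-2m_{ij})^{1/2}$, a continuous function of $\mathbf{M}$ on the cone where these expressions are non-negative. Since by \cite{l:91} the form matrix $\mathbf{F}$ is a maximal invariant under translation, rotation and reflection, the value $\widetilde{\mathbf{F}}(\widetilde{\boldsymbol{\mu}}^{\mathbf{X}})$ obtained from $\widetilde{\mathbf{M}}^{\mathbf{X}}$ is unambiguous, and the continuous mapping theorem gives $\widetilde{\mathbf{F}}(\widetilde{\boldsymbol{\mu}}^{\mathbf{X}}) \to \mathbf{F}(\boldsymbol{\mu}^{\mathbf{X}})$ in probability, and similarly for population II. Finally, the Hadamard product $\ast$ and the Hadamard inverse ${}^{-H}$ act entrywise, so composing once more with a continuous map yields
$$
  \widetilde{\mathbf{F}}\left(\widetilde{\boldsymbol{\mu}}^{\mathbf{X}}\right) \ast
  \widetilde{\mathbf{F}}\left(\widetilde{\boldsymbol{\mu}}^{\mathbf{Y}}\right)^{-H}
  \to
  \mathbf{F}\left(\boldsymbol{\mu}^{\mathbf{X}}\right) \ast
  \mathbf{F}\left(\boldsymbol{\mu}^{\mathbf{Y}}\right)^{-H}
$$
in probability, which is the assertion.

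The step requiring care is the continuity of the Hadamard inverse. The map $a \mapsto 1/a$ is discontinuous at $a = 0$, so I must treat the diagonal and off-diagonal entries separately. On the diagonal both $\mathbf{F}(\boldsymbol{\mu}^{\mathbf{Y}})$ and its estimator vanish and the convention $0/0 = 0$ keeps the map continuous there. For the off-diagonal entries continuity holds only in a neighbourhood of strictly positive values, so the argument needs the mild regularity assumption that the landmarks of the true mean form $\boldsymbol{\mu}^{\mathbf{Y}}$ are distinct, i.e. $d(i,j) > 0$ for $i \neq j$. Under this non-degeneracy condition the composite map is continuous at $(\mathbf{M}^{\mathbf{X}}, \mathbf{M}^{\mathbf{Y}})$ and the conclusion follows; checking that this positivity is inherited by the estimator with probability tending to one, so that $\widetilde{\mathbf{F}}(\widetilde{\boldsymbol{\mu}}^{\mathbf{Y}})^{-H}$ is eventually well defined, is the only genuinely delicate point.
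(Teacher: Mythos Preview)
Your proposal is correct and follows the same route the paper takes: the paper justifies this theorem only by the sentence preceding it, ``From remark \ref{rem3}, the following theorem shows \ldots'', i.e.\ consistency of the method-of-moments estimators together with the fact that a continuous function of a consistent estimator is again consistent (the same \cite[Section 5d.1, p.~351]{r:73} argument used at the end of Section \ref{sec3}). You have simply made this explicit, and your observation that the Hadamard inverse requires the off-diagonal entries of $\mathbf{F}(\boldsymbol{\mu}^{\mathbf{Y}})$ to be strictly positive is a genuine regularity condition that the paper leaves implicit.
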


\begin{theorem} Let $\left(\boldsymbol{\mu}^{\mathbf{X}},
\mathbf{\Sigma}_{K\mathbf{X}}^{*} \otimes \mathbf{\Sigma}_{D\mathbf{X}} \right)$ and
$\left(\boldsymbol{\mu}^{\mathbf{Y}}, \mathbf{\Sigma}_{K\mathbf{Y}}^{*} \otimes
\mathbf{\Sigma}_{D\mathbf{Y}} \right)$ be the parameters for the two populations. Then
$$
  \widetilde{\mathbf{FDM}}\left(\widetilde{\boldsymbol{\mu}}^{\mathbf{X}}, \widetilde{\boldsymbol{\mu}}^{\mathbf{Y}}\right) =
  \widetilde{\mathbf{F}}\left(\widetilde{\boldsymbol{\mu}}^{\mathbf{X}}\right) \ast
  \widetilde{\mathbf{F}}\left(\widetilde{\boldsymbol{\mu}}^{\mathbf{Y}}\right)^{-H}
  \rightarrow \mathbf{FDM}\left(\boldsymbol{\mu}^{\mathbf{X}},
  \boldsymbol{\mu}^{\mathbf{Y}}\right) \quad \mbox{in probability.}
$$
\end{theorem}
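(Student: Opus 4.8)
The plan is to reduce the statement to the consistency of the mean-form estimator and then apply the continuous mapping theorem twice; the argument parallels the previous theorem, the only change being that a general $\mathbf{\Sigma}_{D}$ forces us to draw on the consistency established in Section~\ref{sec4} rather than on the simpler $\mathbf{\Sigma}_{D} = \mathbf{I}_{D}$ result of Section~\ref{sec3}. First I would invoke the consistency theorem at the end of Section~\ref{sec4}, which gives $(\widetilde{\boldsymbol{\mu}}^{*}, \widetilde{\mathbf{\Sigma}}_{K}^{*} \otimes \widetilde{\mathbf{\Sigma}}_{D}) \to (\boldsymbol{\mu}^{*}, \mathbf{\Sigma}_{K}^{*} \otimes \mathbf{\Sigma}_{D})$ in probability for a general non-negative definite $\mathbf{\Sigma}_{D}$ through the iterative algorithm. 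Applying it to each population separately yields $\widetilde{\boldsymbol{\mu}}^{\mathbf{X}} \to \boldsymbol{\mu}^{\mathbf{X}}$ and $\widetilde{\boldsymbol{\mu}}^{\mathbf{Y}} \to \boldsymbol{\mu}^{\mathbf{Y}}$ in probability, where the recovery of $\boldsymbol{\mu}^{*}$ from $\widetilde{\mathbf{M}}$ via Theorem~\ref{th:8} is understood up to the usual rotation and reflection ambiguity.

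Next I would note that the form matrix is both a continuous function of the landmark configuration and invariant under precisely those ambiguities. Each off-diagonal entry $d(i,j)$ is a Euclidean distance between two rows of the configuration, hence continuous, while every diagonal entry is identically zero; since $d(i,j)$ is unchanged by centering, rotation and reflection, one has $\mathbf{F}(\boldsymbol{\mu}^{*}) = \mathbf{F}(\boldsymbol{\mu})$ and the ambiguity in $\widetilde{\boldsymbol{\mu}}^{*}$ leaves $\widetilde{\mathbf{F}}$ unaffected. The continuous mapping theorem then gives $\widetilde{\mathbf{F}}(\widetilde{\boldsymbol{\mu}}^{\mathbf{X}}) \to \mathbf{F}(\boldsymbol{\mu}^{\mathbf{X}})$ and $\widetilde{\mathbf{F}}(\widetilde{\boldsymbol{\mu}}^{\mathbf{Y}}) \to \mathbf{F}(\boldsymbol{\mu}^{\mathbf{Y}})$ in probability.

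Finally I would regard the form-difference map $(\mathbf{F}_{1}, \mathbf{F}_{2}) \mapsto \mathbf{F}_{1} \ast \mathbf{F}_{2}^{-H}$ as the composition of the entrywise reciprocal with the Hadamard product, and verify its continuity at $(\mathbf{F}(\boldsymbol{\mu}^{\mathbf{X}}), \mathbf{F}(\boldsymbol{\mu}^{\mathbf{Y}}))$. A second application of the continuous mapping theorem to the joint convergence of the two estimated form matrices then delivers $\widetilde{\mathbf{FDM}} \to \mathbf{FDM}$ in probability, as claimed.

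The hard part will be the entrywise reciprocal hidden in $\mathbf{F}_{2}^{-H}$, since division is discontinuous where a denominator vanishes. The careful step is to observe that the problematic entries are exactly the off-diagonal distances of $\boldsymbol{\mu}^{\mathbf{Y}}$, which are strictly positive under the standing assumption of a genuine $K$-landmark form with distinct points; hence a neighborhood of $\mathbf{F}(\boldsymbol{\mu}^{\mathbf{Y}})$ stays away from these singularities and the Hadamard inverse is continuous there, while the convention $0/0 = 0$ handles the identically zero diagonal separately so that continuity of the full map holds on the relevant region.
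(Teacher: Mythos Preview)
Your argument is correct and, in fact, considerably more detailed than what the paper offers: the paper states this theorem without any proof, relying implicitly on the same mechanism used for the preceding theorem (consistency of the method-of-moments estimators together with Remark~\ref{rem3} and the continuity of the relevant maps, in the spirit of \cite[Section 5d.1]{r:73}). Your explicit two-step application of the continuous mapping theorem, the observation that $\mathbf{F}$ is invariant under the rotation/reflection ambiguity left by Theorem~\ref{th:8}, and your care with the Hadamard inverse near zero denominators all make precise what the paper leaves to the reader; the underlying route is the same one the paper intends.
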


\section{Example}\label{exa}

The mouse vertebra problem was originally studied in the Gaussian case by \cite{DM98} (see also
\cite{md:89}). A further analysis under elliptical models was implemented by \cite{dc:12}. The experiment
considers the second thoracic vertebra T2 of two groups of mice: large and small. The mice are selected
and classified according to large or small body weight; in this case, the sample consists of 23, 23 and
30 large, small and control bones, respectively. The vertebras are digitised and summarised in six
mathematical landmarks which are placed at points of high curvature, see figure
\ref{fig:Smalllargeandcontrolmousevertebrasample}; they are symmetrically selected by measuring the
extreme positive and negative curvature of the bone. See \cite{DM98} for more details. The shape
difference analysis among the three groups is quite solved by a different approaches. However the
correlation structure among landmarks requires more analysis; strong assumptions about those relations
are usually considered because the complex exact shape distribution and a non existence theory for
estimation for such invariant functions.

More than an example, this landmark data  is highly valuable for a correlation structure analysis because
the symmetry of the vertebra, certainly suggest a priori a non isotropic model. The control group is also
useful for comparisons and correctness.
\begin{figure}[!h]
  \begin{center}
   \includegraphics[width=8cm,height=8cm]{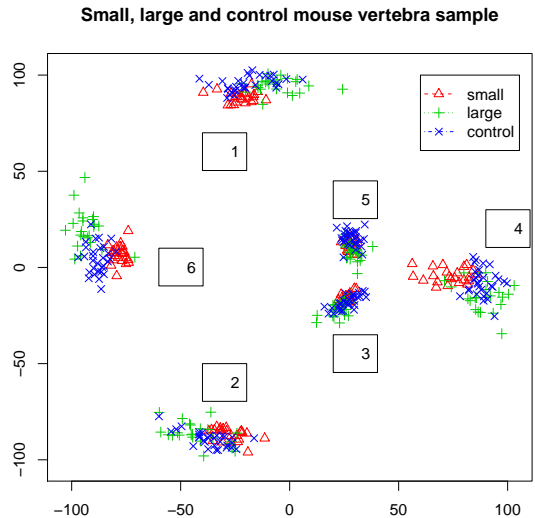}%
   \caption{Mouse vertebra sample}\label{fig:Smalllargeandcontrolmousevertebrasample}
  \end{center}
\end{figure}

Theorems \ref{th:6} and \ref{th:8} can be easily  implemented for a number of models. We focus on the
main novelty (Theorem \ref{th:6}) and Kotz type model (including Gaussian) which is very flexible and
meaningful for various values of the parameters $r, s$ and $N$, see appendix.

First of all we illustrate Theorem \ref{th:7} under six different models with independent landmarks.
Moment-method estimates of mean shape by using the common Gaussian model is shown in figure
\ref{GaussianSLC}, in this case the estimate are complete unrealistic, as we expect, given the assumption
of independence of landmarks. However, if we consider more complex models based on independence, the
estimation tends to be more similar to the structure suggested by the sample. The addressed evolution
from Kotz 1 to Kotz 5 is depicted in figures \ref{Kotz1SLC} to \ref{Kotz5SLC}.

\begin{figure}[!h]
  \begin{center}
   \includegraphics[width=6cm,height=6cm]{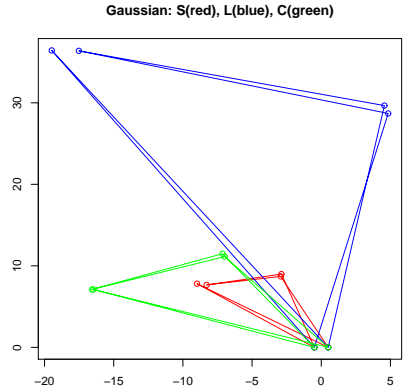}%
   \caption{Moment method estimates under independence: Gaussian model}\label{GaussianSLC}
  \end{center}
\end{figure}

\begin{figure}[!h]
  \begin{center}
   \includegraphics[width=6cm,height=6cm]{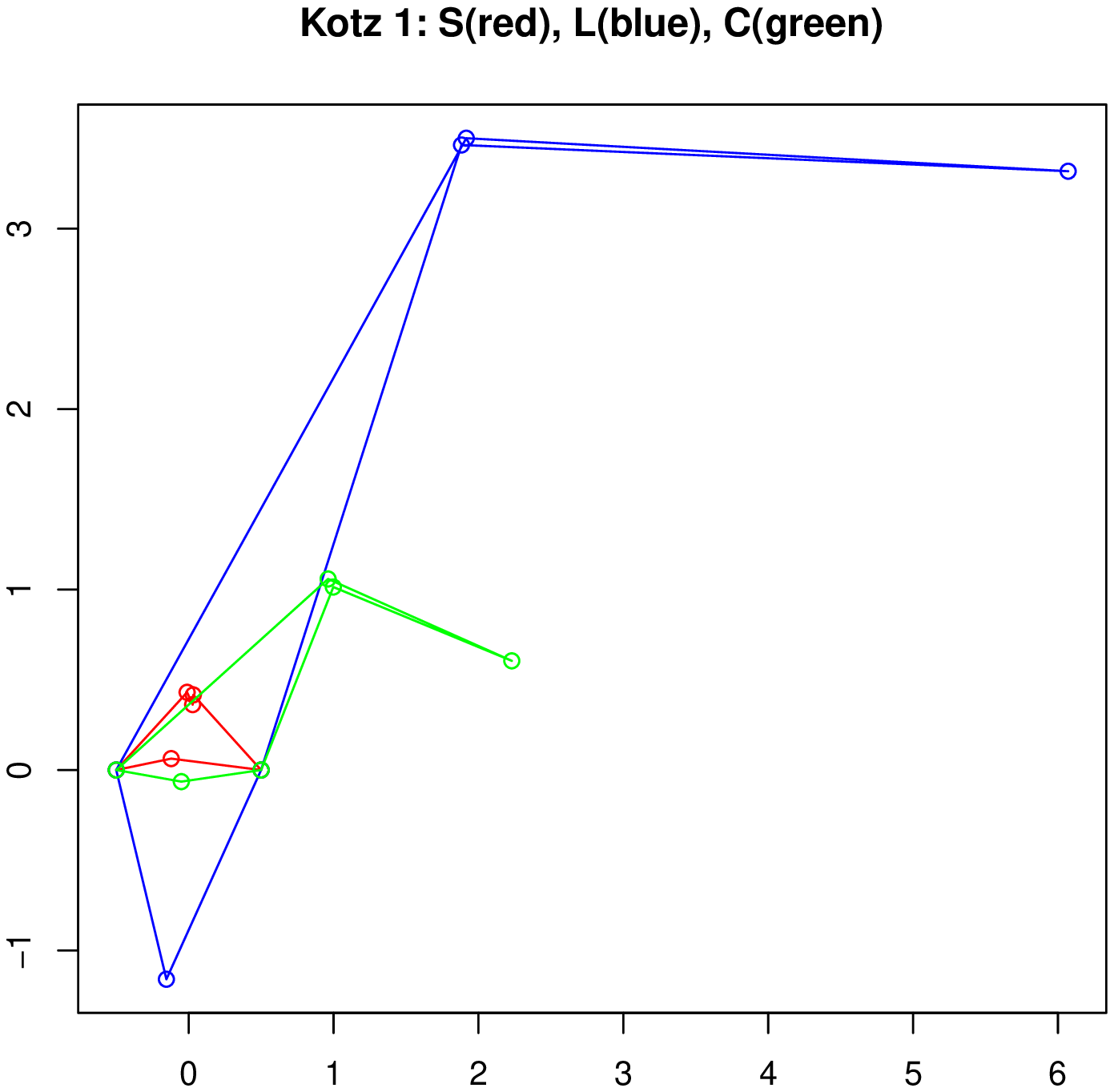}%
   \caption{Moment method estimates under independence: Kotz 1 model}\label{Kotz1SLC}
  \end{center}
\end{figure}

\begin{figure}[!h]
  \begin{center}
   \includegraphics[width=6cm,height=6cm]{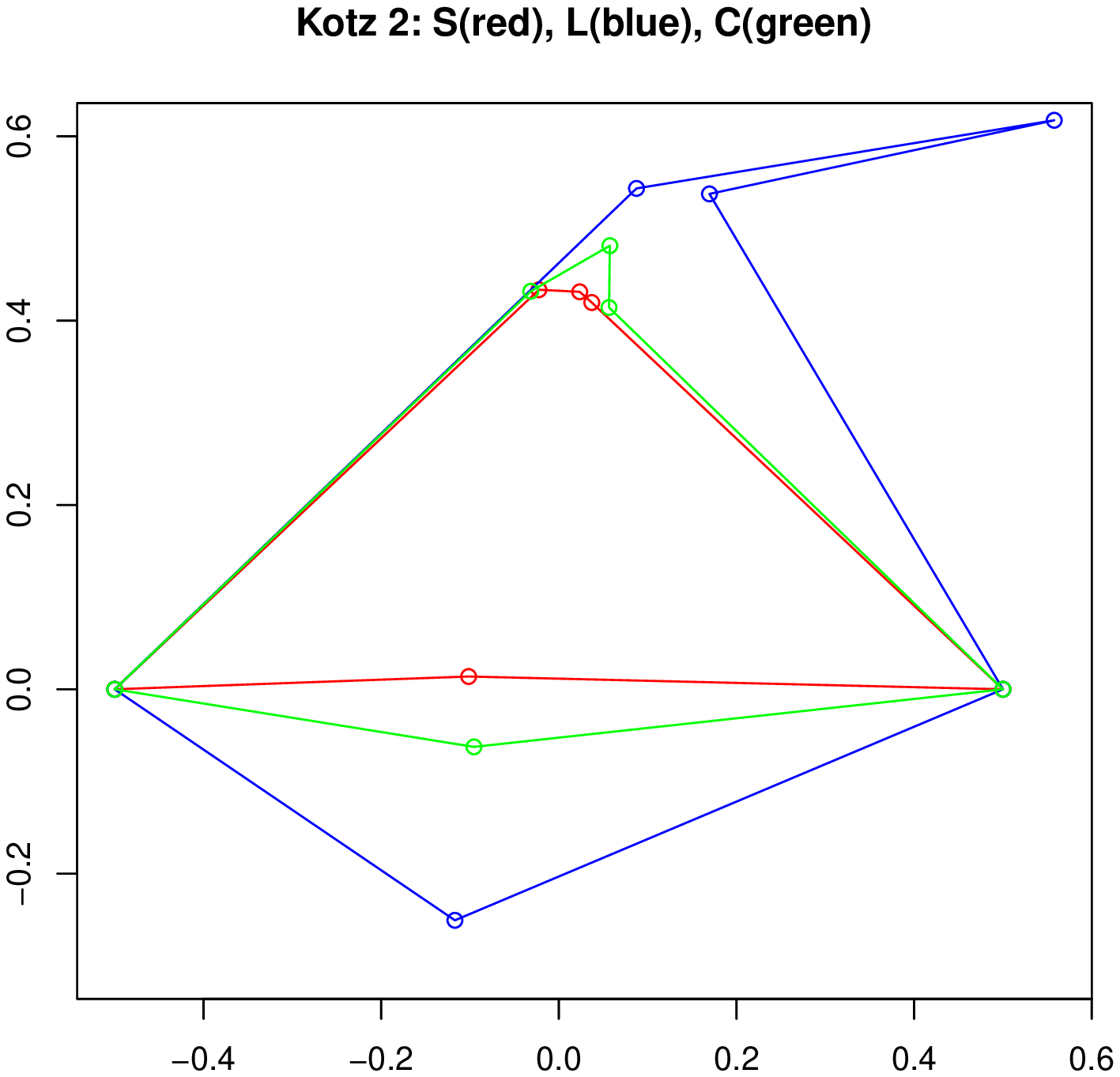}%
   \caption{Moment method estimates under independence: Kotz 2 model}\label{Kotz2SLC}
  \end{center}
\end{figure}

\begin{figure}[!h]
  \begin{center}
   \includegraphics[width=6cm,height=6cm]{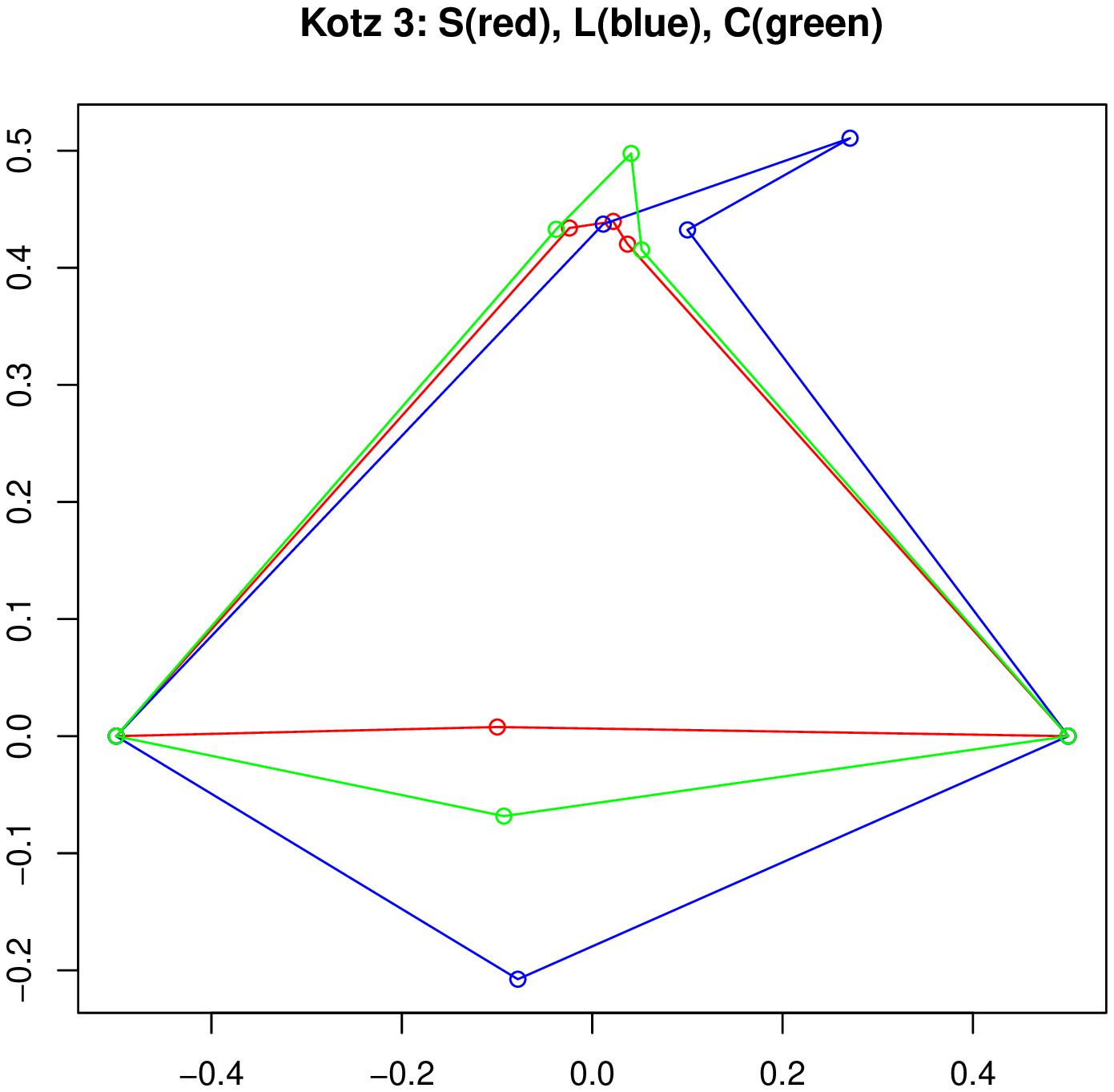}%
   \caption{Moment method estimates under independence: Kotz 3 model}\label{Kotz3SLC}
  \end{center}
\end{figure}

\begin{figure}[!h]
  \begin{center}
   \includegraphics[width=6cm,height=6cm]{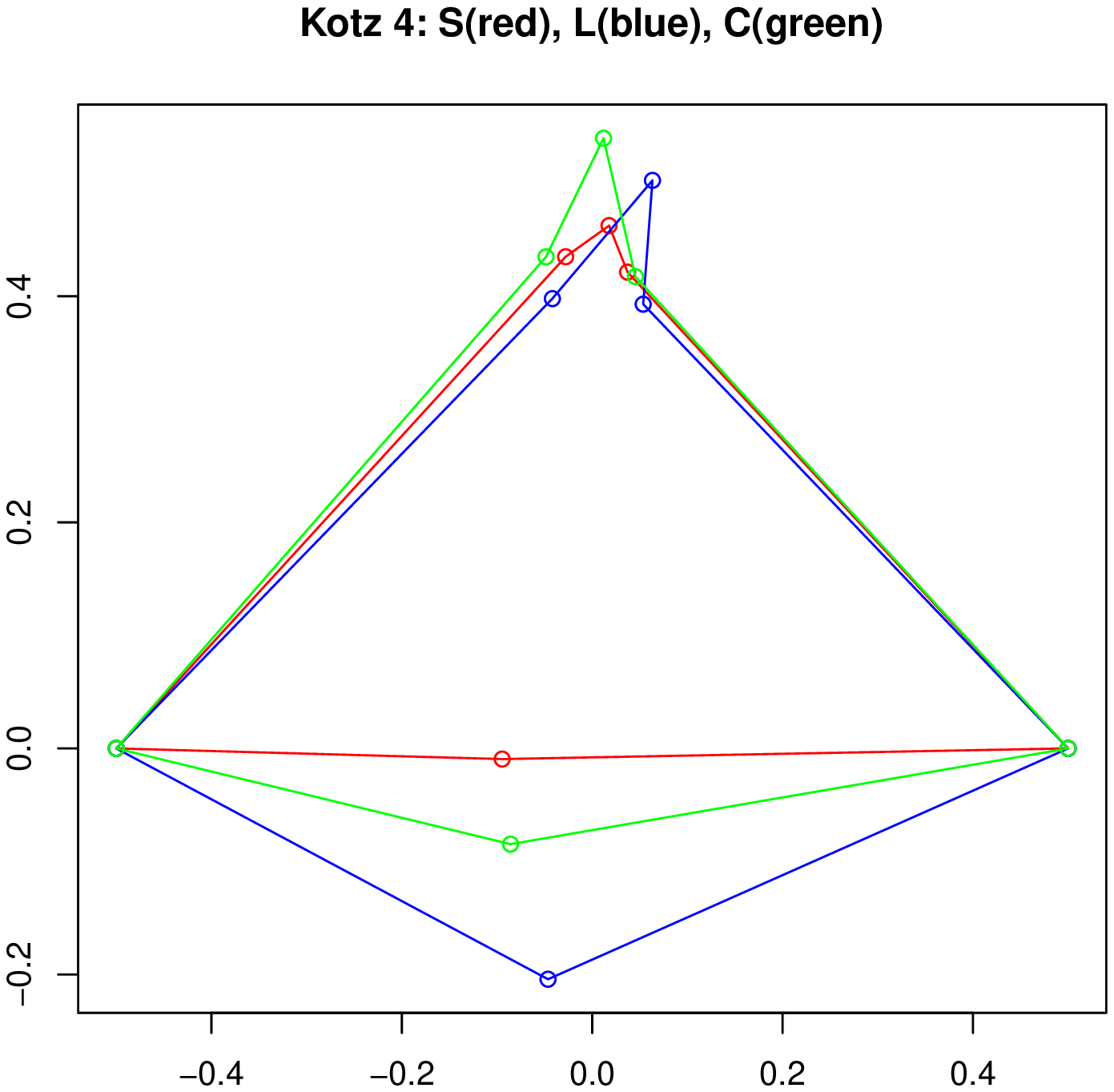}%
   \caption{Moment method estimates under independence: Kotz 4 model}\label{Kotz4SLC}
  \end{center}
\end{figure}

\begin{figure}[!h]
  \begin{center}
   \includegraphics[width=6cm,height=6cm]{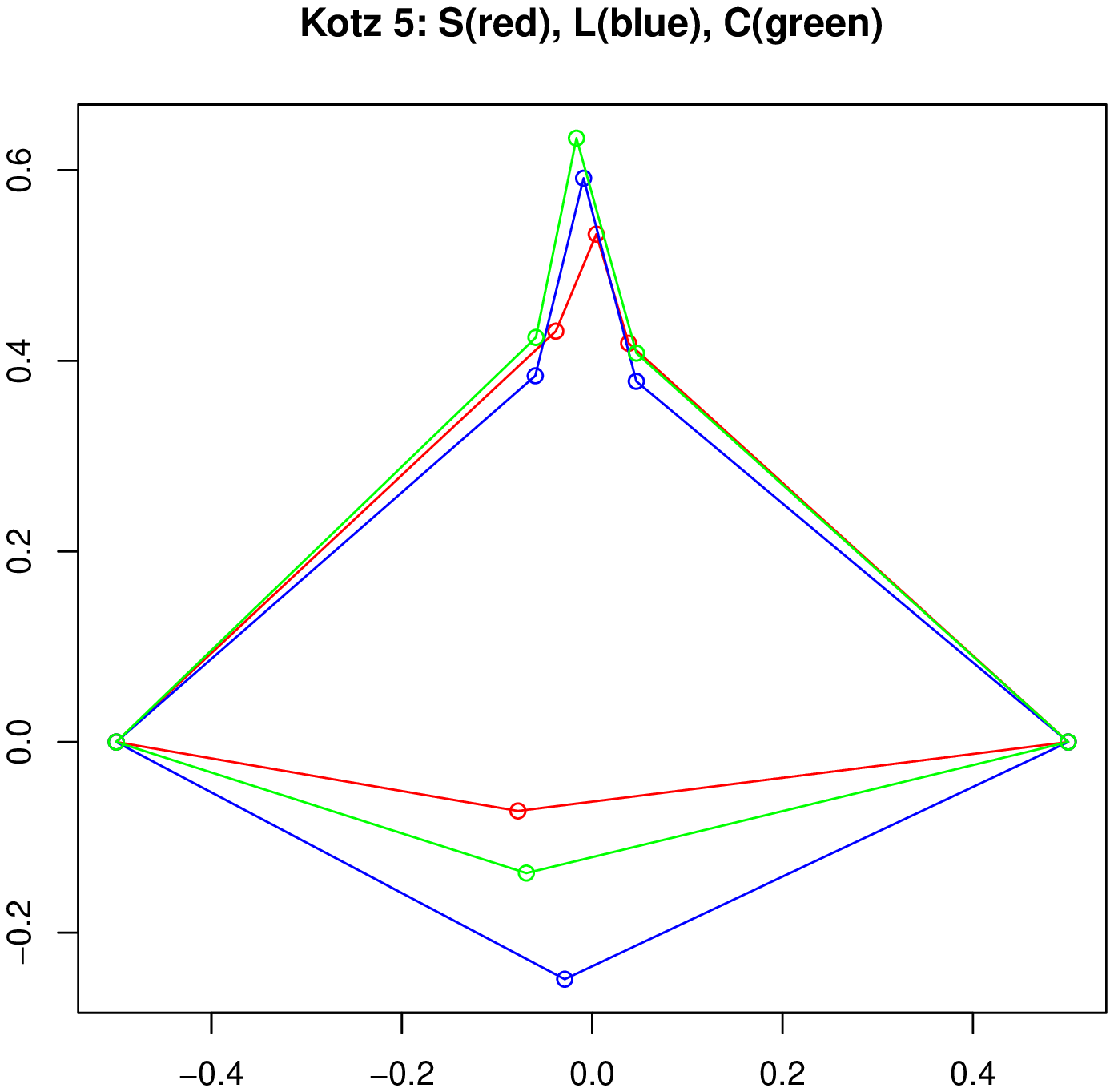}%
   \caption{Moment method estimates under independence: Kotz 5 model}\label{Kotz5SLC}
  \end{center}
\end{figure}

An heuristic behavior is noted, the lack of dependence in the Gaussian model, and its unrealistic moment
method estimates, it seems to be improved by considering a more robust Kotz type model even with landmark
dependence. The literature has studied this artificial data by the independent Gaussian case, so, given
that no expert have set this assumption we can get further into more robust analysis and advance in some
selection criteria, but if we have an experiment modeled by the independent Gaussian case according to
the opinion of an expert in the field, we must follow that law and the further analysis, based on
landmark dependence and elliptical families, that we provide next, cannot be implemented in such cases.

In the artificial mice data, we now can  focus on the dependent case and the moment method estimators of
Theorem \ref{th:6}, given that the Gaussian case is out of any consideration, then we have to study, for
example, other Kotz models. In order to illustrate the important effect of landmark dependence we
consider the simplest Kotz model after Gaussian, when $N=2$, $r=1/2$ and $s=1$, which is referred as Kotz
1 model, and we compare the performance of Theorem \ref{th:6} with another mean shape estimations. Table
\ref{table2} provides comparisons among mean shape estimates of the small group, they include mean shape
by moments of Theorem \ref{th:6}, the mean shape by  Frechet method (see \cite{k:92}), and Bookstein
method (see \cite{b:86}); certainly the estimations are truly similar. Note also that Kotz 1 law with
independent landmarks provided a bad moment method estimator of mean shape, but the same model under the
expected and realistic dependence revels  similarity with more complex mean shape estimators derived by
standard shape theories, see figures \ref{Kotz1SLC} and \ref{Kotz1th6}, respectively.

\begin{table}[!h]  \centering \caption{Estimation for the   mean shape for the small group by Theorem
\ref{th:6} (Kotz 1), Frechet (F), and Bookstein (B).}\label{table2}
\medskip
\begin{tiny}
\begin{tabular}{|c||c||c||c||c||c|}
  \hline
  % after \\: \hline or \cline{col1-col2} \cline{col3-col4} ...
Th. 6, $\widetilde{\mu}_{1}$&Th. 6, $\widetilde{\mu}_{2}$&B. $\widetilde{\mu}_{1}$&B.
$\widetilde{\mu}_{2}$&F. $\widetilde{\mu}_{1}$&F. $\widetilde{\mu}_{2}$\\\hline
 -0.5 & 0&-0.5 & 0&-0.5 & 0\\
 0.5 & 0&0.5 & 0&0.5 & 0\\
  0.084507028 & 0.3301634&0.08469746&  0.2933430& 0.08490820 & 0.2924684\\
  0.014836162 & 0.6957339&0.01215768 & 0.5613175&0.01245608 & 0.5589496\\
 -0.073397569  &0.3394693&-0.06874750 & 0.2991278&-0.06869796 & 0.2982314\\
 -0.005026754 &-0.2184060&-0.02502185& -0.3041418&-0.02512807& -0.3044915\\
  \hline
\end{tabular}
\end{tiny}
\end{table}

The exact formula for the moments estimation (Theorem \ref{th:6}) also agrees with the previous
conclusions in literature about strong difference in Gaussian mean shape  between the small (S) and large
(L) groups. Figure \ref{Kotz1th6} also shows the mean shape estimation of the control (C) group. As we
expect, the control group must tend to show strong symmetry among landmarks, by "averaging" in some sense
the small and large estimates.

\begin{figure}[!h]
  \begin{center}
   \includegraphics[width=6cm,height=6cm]{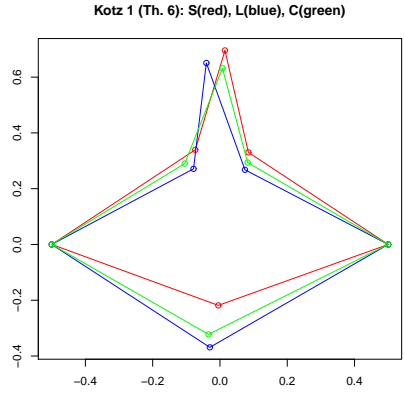}%
   \caption{Moment method estimates under dependence: Kotz 1 model}\label{Kotz1th6}
  \end{center}
\end{figure}

Different types of Kotz distribution have also modeled the sample, they correspond to the denoted models
Kotz 1, Kotz 2, Kotz 3, Kotz 4 and Kotz 5, with parameters $N=2, s=1, r=1/2$; $N=3, s=1, r=1/2$; $N=2,
s=2, r=1/2$; $N=2, s=3, r=1/2$ and $N=20, s=20, r=1/2$, respectively. Technical details about the
generalised singular Pseudo-Wishart distributions and particular Kotz Pseudo-Wishart distributions
referred in this example, can be seen in the appendix. The corresponding mean shapes estimates were
computed, but for reasons of space, we only show the results of the Kotz 5 model (suggested by the
preceding independent results and certain selection criteria that we will propose later) see figure
\ref{Kotz5}.
\begin{figure}[!h]
  \begin{center}
   \includegraphics[width=6cm,height=6cm]{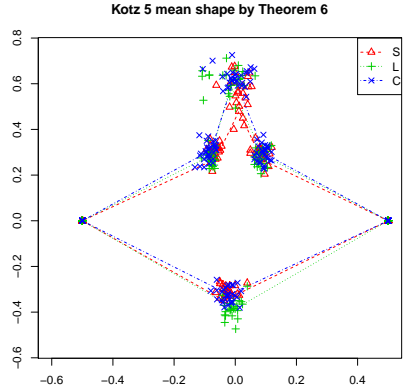}%
   \caption{Mean shape estimates for large, small and control groups under the Kotz 5 model}\label{Kotz5}
  \end{center}
\end{figure}

Now we apply the algorithm for a consistent estimation when $\mathbf{\Sigma}_{D}$ is a general
non-negative definite matrix, under  the Kotz 5 law. For the routine propose in Section \ref{sec4} we
have fixed $\varepsilon_{1}=\varepsilon_{2}=0.000005$ in the three groups small, large  and control,
then we found that the number of iteration to reach the addressed tolerance is 57, 53 and 61,
respectively.

For the small group the estimated covariance matrices are given next (here the associated correlation
matrix $\boldsymbol{\rho}$ of $\mathbf{\Sigma}$ is provided, for the sake of interpretation, recall that
$\boldsymbol{\rho}= \left(\text{diag}(\mathbf{\Sigma})\right)^{-\frac{1}{2}} \, \mathbf{\Sigma} \,
\left(\text{diag}(\mathbf{\Sigma})\right)^{-\frac{1}{2}}$):

\begin{scriptsize}
$$
  \widetilde{\boldsymbol{\rho}}_{K}^{*}=
  \left(
  \begin{array}{cccccc}
 1.0000000& -0.88123087& -0.45286210& -0.0947470&  0.3167573&  0.1049686\\
-0.8812309&  1.00000000& -0.01931031& -0.3859582& -0.7246992&  0.3741399\\
-0.4528621& -0.01931031&  1.00000000&  0.9267571&  0.6990041& -0.9323209\\
-0.0947470& -0.38595825&  0.92675709&  1.0000000&  0.9113738& -0.9979133\\
 0.3167573& -0.72469917&  0.69900414&  0.9113738&  1.0000000& -0.9087033\\
 0.1049686&  0.37413987& -0.93232089& -0.9979133& -0.9087033&  1.0000000\\
  \end{array}
  \right);
$$
\end{scriptsize}

and
\begin{scriptsize}
$$
  \widetilde{\boldsymbol{\rho}}_{D}=
  \left(
  \begin{array}{cc}
    1.00000000& -0.1305434\\
    -0.1305434&  1.00000000\\
  \end{array}
  \right).
$$
\end{scriptsize}

For the large group the estimated correlation matrices are:

\begin{scriptsize}
$$
  \widetilde{\boldsymbol{\rho}}_{K}^{*}=
  \left(
  \begin{array}{cccccc}
  1.00000000& -0.65790585& -0.49909037& -0.05610956& -0.07744806&  0.42376772\\
 -0.65790585&  1.00000000&  0.06499537& -0.44441269& -0.29572585&  0.03327717\\
 -0.49909037&  0.06499537&  1.00000000&  0.42647926&  0.65775361& -0.76574318\\
 -0.05610956& -0.44441269&  0.42647926&  1.00000000&  0.65493138& -0.73668655\\
 -0.07744806& -0.29572585&  0.65775361&  0.65493138&  1.00000000& -0.79064351\\
  0.42376772&  0.03327717& -0.76574318& -0.73668655& -0.79064351&  1.00000000\\
  \end{array}
  \right);
$$
\end{scriptsize}

and
\begin{scriptsize}
$$
  \widetilde{\boldsymbol{\rho}}_{D}=
  \left(
  \begin{array}{cc}
    1.00000000& -0.2080039\\
   -0.2080039&  1.00000000\\
  \end{array}
  \right).
$$
\end{scriptsize}

Meanwhile in the control group the estimated correlation matrices
are:

\begin{scriptsize}
$$
  \widetilde{\boldsymbol{\rho}}_{K}^{*}=
  \left(
  \begin{array}{cccccc}
  1.00000000& -0.6179802& -0.6137305& -0.3968700&  0.05198526&  0.5036286\\
 -0.61798019&  1.0000000& -0.1036402& -0.4037052& -0.70811391&  0.2660900\\
 -0.61373047& -0.1036402&  1.0000000&  0.7604036&  0.50488092& -0.8386367\\
 -0.39687003& -0.4037052&  0.7604036&  1.0000000&  0.64810504& -0.9587295\\
  0.05198526& -0.7081139&  0.5048809&  0.6481050&  1.00000000& -0.6427126\\
  0.50362856&  0.2660900& -0.8386367& -0.9587295& -0.64271257&  1.0000000\\
  \end{array}
  \right);
$$
\end{scriptsize}
and
\begin{scriptsize}
$$
  \widetilde{\boldsymbol{\rho}}_{D}=
  \left(
  \begin{array}{cc}
    1.00000000& 0.1048453\\
   0.1048453&  1.00000000\\
  \end{array}
  \right).
$$
\end{scriptsize}

The three groups reveal almost null correlation among axes, but some important correlation among
landmarks, as we expect from the pseudo-symmetry of the bones. The estimates in the small and large
groups detects the main landmarks responsible for the mean shape difference, meanwhile in the control
case the estimates tends to follow the main contribution of
large or small differentiating landmarks as we expect.

\bigskip

In a similar way we have run the routines with the same tolerance
$\varepsilon_{1}=\varepsilon_{2}=0.000005$ for the models Kotz 1, to Kotz 4; they reached the stability
between 50 to 70 iterations in the three groups, and similar conclusions about the almost null
correlation among axes and strong correlation among landmarks were found in the models. We will not show
the estimates of each Kotz type, but have provided the results for the model Kotz 5 type, for reasons
that we will explain later when the "best" model is selected under certain criteria; it was also
suggested by the independent case analysis.

For a selection model criteria, the control group plays a fundamental role, in this case we just need to
look for the law which obtains the minimum coefficient of variation when the small and large groups are
compared with the control one; the analysis also must consider the distance between small and the large
group relative to the mean with controls. We apply non-Euclidian distance between covariance matrix, a
technique due to \cite{DKZ09}. The method is appropriate for meaningful correlation matrices, in this
case it is performed only for $\widetilde{\Sigma}_{K}^{*}$, because $\widetilde{\Sigma}_{D}$ certainly
ratifies in all the models that no correlation among axes is observed. In Tables \ref{table3} and
\ref{table4},  $K1$,..., $K5$, $s$, $l$, $c$, stand for Kotz 1,..., Kotz 5, small, large and control,
respectively.

\bigskip

\begin{table}[!h]  \centering \caption{Selection model criteria.}\label{table3}\medskip
\begin{footnotesize}
\begin{tabular}{|c|c|c|c|c|c|c|c|c|}
  \hline
  % after \\: \hline or \cline{col1-col2} \cline{col3-col4} ...
    & K1l & K1c & K2s & K2l & K2c & K3s & K3l & K3c \\\hline
  K1s & 12.9&  8.7(37)& 12.8& 15.9& 14.1& 11.8& 15.7& 13.7\\
  K1l & &  5.1(37)& 10.6 & 6.1 & 8.4 &10.8 & 5.1 & 8.5 \\
  K1c & &  &  9.6 & 9.1&  8.9 & 9.2 & 8.6 & 8.7 \\\hline
  K2s  &&  &  & 11.0 &11.0(14) & 6.0& 11.1 &10.7 \\
  K2l & &  &  &  &  9.0(14)& 12.0 & 1.5 & 9.1 \\
  K2c & &  &  &  &  & 11.6 & 8.8 & 0.9 \\\hline
  K3s & &  &  &  &  &  & 12.1& 11.2(15) \\
  K3l & &  &  &  &  &  &  &  9.0(15) \\\hline
\end{tabular}
\end{footnotesize}
\end{table}
\bigskip

\begin{table}[!h]  \centering \caption{Selection model criteria.}\label{table4}
\medskip
\begin{footnotesize}
\begin{tabular}{|c|c|c|c|c|c|c|}
  \hline
  % after \\: \hline or \cline{col1-col2} \cline{col3-col4} ...
    &  K4s & K4l & K4c & K5s & K5l & K5c \\\hline
  K1s &   11.1&  15.0&  13.1&  11.1&  13.9&  12.1\\
  K1l &  11.3 &  3.9 &  8.6 & 12.2 &  2.4 &  2.8\\
  K1c &  9.2&   7.7 &  8.4 &  9.7 &  6.4 &  4.8\\\hline
  K2s  &  8.9&  10.8 & 10.3 & 10.9 & 10.1 &  9.7\\
  K2l & 13.3 &  3.0 &  9.5&  14.7&   4.4 &  7.0\\
  K2c & 12.4 &  8.5 &  2.1 & 13.4 &  8.2 &  8.4\\\hline
  K3s & 6.4 & 11.6 & 10.7 &  9.2&  10.7 &  9.8\\
  K3l & 13.2 &  1.6 &  9.5 & 14.6 &  3.3 &  6.2\\
  K3c &  11.9 &  8.7 &  1.2 & 12.9 &  8.4 &  8.3\\\hline
  K4s &    &  12.6 & 11.4(16) &  6.8  &11.6 & 10.6\\
  K4l &    &   &   9.1(16) & 13.9  & 1.8 &  5.1\\
  K4c &    &   &   &  12.3  & 8.7 &  8.3\\\hline
  K5s  &   &   &   &   &  12.8 & 11.6(74)\\
  K5l  &   &   &   &   &   &   3.6(74)\\
  \hline
\end{tabular}
\end{footnotesize}
\end{table}
\bigskip

Tables \ref{table3} and \ref{table4} shows all the pairwise covariance distances, in particular, the
percentage variation coefficient is presented in parenthesis. We are searching for models which reflect
the role of the control group and separated the classes properly, the analysis must be complemented with
mean shape estimates and a third criteria involving how the last ones are far from another accepted
estimates, the Frechet mean shape for example. The addressed mean shape distance can be achieved by a
number of approaches, see for example \cite{ken:84}.

Kotz 3 and 4 laws behave well with percentage variation coefficient, but the corresponding distance  with
the control group and the sample is to far to be realistic, specially with the small group.  If we find
the so called Riemannian distance among the moment method estimates and Frechet and Bookstein mean shape,
we obtain the results of table \ref{table4b}:

\begin{table}[!h]  \centering \caption{Model selection criteria.}\label{table4b}\medskip
\begin{small}
\begin{tabular}{|c|c|c|c|c|c|c|}
  \hline
  % after \\: \hline or \cline{col1-col2} \cline{col3-col4} ...
    &  K2& K3& K4& K5& F.& B.\\\hline
  K1&  0.274& 0.236& 0.211& 0.180& 0.113& 0.112\\
  K2&  & 0.082& 0.128& 0.153& 0.189& 0.191\\
  K3&  & & 0.048& 0.078& 0.131& 0.133\\
  K4&  & & & 0.035& 0.099& 0.100\\
  K5&  & & & & 0.067& 0.068\\
  F&  & & & & & 0.002\\
  \hline
\end{tabular}
\end{small}
\end{table}

\bigskip
The mean shape estimate based on a Kotz 5 model is very near to the estimates computed by Frechet and
Bookstein (which are significantly similar), it also reflects good difference between the small and
large; similar findings for the large and small group were computed, then collecting the results, we can
propose Kotz type 5 model as a suitable law for modeling this particular example. Note that the selection
agrees with the conclusion proposed in the independent case.

It is important to note, that mathematical or statistical selection is just a suggestion for an
experiment lacking of any prior assumption of the supporting distribution provided by an expert. In our
case, literature shows no expert assumption about normality, in fact, this data full studied in
\cite{DM98} and the references therein, was traditionally set in the Gaussian theory in order to simplify
computations and/or the use of the classes of exact distributions were not available at that time.
However, if an experiment was sufficiently studied by an expert which the Gaussian model is truly normal,
then the above selection of models, are out of significance; and given that moments-method estimates does
not work under Gaussianity, as we have shown in this example, then the results presented here cannot be
applied properly.

Now, at this stage, the conclusion about Kotz 5 model ratifies that non-Gaussian models explain better
the three samples (an elliptical isotropic approach also verified this conclusion, see for example
\cite{dc:12}).

Once the model is selected, we are interested in application of Section \ref{sec5}, about estimation of
mean form difference. In fact, we can go further by considering hypothesis testing for equality of the
associated Euclidean Distance Matrices of two populations.

The methodology can be found in \cite{lr:91} and the references therein. We are interested in testing
$H_{0}: \mathbf{F}(\boldsymbol{\mu}^{\mathbf{X}})=c\mathbf{F}(\boldsymbol{\mu}^{\mathbf{Y}})$, for some
$c>0$, where $\boldsymbol{\mu}^{\mathbf{X}}$ and $\boldsymbol{\mu}^{\mathbf{Y}}$ are the population mean
shape. Based on a sample of objects $\mathbf{X}$'s and $\mathbf{Y}$'s, with corresponding estimated mean
shapes $\widetilde{\boldsymbol{\mu}}^{X}$ and $\widetilde{\boldsymbol{\mu}}^{Y}$ obtained with the exact
formula given in Theorem \ref{th:6}, we derive the form difference matrix
$\mathbf{FDM}\left(\widetilde{\boldsymbol{\mu}}^{\mathbf{X}},
\widetilde{\boldsymbol{\mu}}^{\mathbf{Y}}\right)$. This last matrix can be used for defining a number of
suitable statistics for testing $H_{0}$, however, \cite{lr:91} recommend the following:
$$
  T=\max_{i,j}FDM_{ij}/\min_{i,j}FDM_{ij},
$$
where $FDM_{ij}$ is the $i,j-$element of matrix $\mathbf{FDM}$. Note that if $H_{0}$ is true $T$ is close
to one. Moreover, $T$ satisfies the desirable property of invariance under scaling, see \cite{lr:91} for
more details.

The null distribution is difficult to obtain even in the simplest case of Gaussian, so we can obtain an
empirical null distribution by using the well known bootstrap procedure, see \cite{lr:91} and the
references therein. For similar samples of the current example, the above referred authors recommend a
bootstrap of size 100.

Once the empirical distribution is obtain, a p-value, based on the upper tail of the observed statistics,
rejects $H_{0}$ for small values near to 0.1.

Table \ref{table5} reports such tests for the Gaussian and Kotz 3 type models and the three pair
comparisons of interest.  We note that the usual Gaussian case, under the expected dependence condition
of Theorem \ref{th:6} malfunction and cannot detect the role of the control test, given a wrong
conclusion. The selected model by covariance distances, separates as we expect the control group and
gives a suitable p-value of certain difference, but it is not sufficient enough for concluding shape
difference. This open an interesting discussion about the method based on coordinate free approach of
\cite{lr:91}, given that the quotient pairwise-element in definition of the matrix form difference is
neglecting the whole matrix structure. Improving this aspect deserves a further work by defining a more
robust matrix $\mathbf{FDM}$ based on usual products than the very restrictive Hadamard product.
Moreover, finding the corresponding exact distribution of $\mathbf{FDM}(\mathbf{X},\mathbf{Y})$ can
provide a promising null distribution which can model hypothesis testing efficiently.

\begin{table}
\centering
\caption{p-values of testing the equality of mean shape under different models and pairs of populations .}\label{table5}
\begin{tabular}{|c||c|c|c|}
  \hline
  % after \\: \hline or \cline{col1-col2} \cline{col3-col4} ...
   & Small-Large & Small-Control & Large-Control  \\\hline
  Gaussian & 0.00 & 0.00 & 0.00 \\
  Kotz 5 & 0.12 & 0.51 & 0.74 \\
  \hline
\end{tabular}
\end{table}

\section{Conclusions}
\begin{enumerate}
  \item First, by replacing the Gaussian model for the elliptical model, an infinite range of possibilities
    in making an assumption of a model is opened, allowing to model a wide range of real situations,
    more or less heavy tails and more or less kurtosis than the Gaussian model.
  \item Under this family of elliptical models is possible consistently estimate all parameters.
   \item Notably, all these estimators are extremely easy to calculate.
  \item Alternatively to the hypothesis assumed in subsections 2.3 and 2.4, an interesting alternative to investigate is:
    Assume that the joint distribution of $\mathbf{E}_{1}, \mathbf{E}_{2}, \dots,\mathbf{E}_{n}$ is
    $$
      \mathbb{E} = (\mathbf{E}_{1}, \mathbf{E}_{2}, \dots,\mathbf{E}_{n}) \sim
      \mathcal{E}_{K \times nD}(\mathbf{0}, \mathbf{\Sigma}_{K} \otimes \mathbf{\Sigma}_{D} \otimes
      \mathbf{I}_{n}, h),
    $$
    where $\cov(\Vec \mathbb{E}^{T}) = \mathbf{\Sigma}_{K} \otimes \mathbf{\Sigma}_{D} \otimes
   \mathbf{I}_{n}$. Then proceeding in a similar way and generalising to no central case the results in
   \cite[Eq. 3.4.14, p. 109]{fz:90} and \cite[Theorem 5.1.6, p. 170]{gv:93}, we obtain that the joint
   distribution of $\mathbf{B}_{1}, \mathbf{B}_{2}, \dots,\mathbf{B}_{n}$ is
   $$
     \mathbb{B} = (\mathbf{B}_{1}, \mathbf{B}_{2}, \dots,\mathbf{B}_{n}) \sim
     \mathcal{GPW}_{K,n}\left(\mathbf{\Sigma}_{K}^{*},\frac{D}{2},\frac{D}{2},\dots,\frac{D}{2},
     \mathbf{\Omega}, h\right),
   $$
   where $\mathbf{\Omega} = \left(\mathbf{\Sigma}_{K}^{*}\right)^{-} \boldsymbol{\mu}^{*}
   \mathbf{\Sigma}_{D}^{-1}\boldsymbol{\mu}^{*T}$. Noting that in this case it is implicitly assuming that
   the sample $\mathbf{X}_{1}, \dots, \mathbf{X}_{n}$ is dependent.
  \item Alternatively, and recalling that the method-of-moments estimators are not uniquely defined (see Remark \ref{rem3})
  the method-of-moments estimator of $\mathbf{\Sigma}_{D}$ can be obtained from first two moments
  of $\mathbf{B}$ too.
\end{enumerate}

\section*{Acknowledgments}
This article was written under the existing research agreement between the first author and the
Universidad Aut\'onoma Agraria Antonio Narro, Saltillo, M\'exico. F. Caro was supported by a research
project of  University of Medell\'{i}n, Colombia.

\appendix

\section{Particular generalised Pseudo-Wishart singular distributions}

The following result is a particular case of the general result in \cite{dggf:05} or \cite{dggj:06}, when
$\mathbf{\Theta}$ is non singular and the notation of this paper is assumed.

\begin{theorem} [Generalised \ singular Pseudo-Wishart distributions]\label{teo31}
Assume that $\mathbf{Y} \sim {\mathcal{E}}_{K \times D}^{K-1,D}(\boldsymbol{\mu}, \mathbf{\Sigma} \otimes
\mathbf{\Theta},h)$, where $h$ admits a power series expansion
$$
   h(v + a) = \sum_{t = 0}^{\infty} \frac{h^{(t)}(a)v^{t}}{t!}.
$$
in $\Re$. Let, also, $q = \min(K-1,D)$; then the density of $\mathbf{B} = \mathbf{Y}
\mathbf{\Theta}^{-1}\mathbf{Y}^{T}$ is given by
\begin{equation}\label{wpwm}
   = \frac{\pi ^{qD/2} |\mathbf{L}|^{(D-K-1)/2}}{\Gamma_{q}(\Half D) \left (
  \displaystyle\prod_{i=1}^{(K-1)} \lambda_{i}^{D/2} \right )} \sum_{t = 0}^{\infty}\sum_{\kappa}^{}
  \frac{h^{(2t)}(\tr ( \mathbf{\Sigma}^{-}\mathbf{B} + \mathbf{\Omega}))}{t!} \frac{C_{\kappa}( \mathbf{\Omega}
  \mathbf{\Sigma}^{-} \mathbf{B})}{\left (\Half D \right)_{\kappa}}(d\mathbf{B})
\end{equation}
where $\mathbf{B} = \mathbf{W}_{1}\mathbf{L}\mathbf{W}^{T}_{1}$, is the nonsingular espectral
decomposition of $\mathbf{B}$ with $\mathbf{W}_{1}$ a semiorthogonal matrix, i.e.
$\mathbf{W}_{1}^{T}\mathbf{W}_{1} = \mathbf{I}_{q}$, and $\mathbf{L} = \diag(l_{1}, \dots,l_{q})$;
$\mathbf{\Omega} = \mathbf{\Sigma}^{-}\boldsymbol{\mu} \mathbf{\Theta}^{-1} \boldsymbol{\mu}^{T}$,
$(d\mathbf{B})$ is Hausdorff measure is defined in \cite[Section 5]{dggf:05}; $\lambda_{i}$, $i = 1,
\dots,(K-1)$, are nonull eigenvalues of $\mathbf{\Sigma}$, and where $C_{\kappa}(\mathbf{A})$ are the
zonal polynomials of $\mathbf{A}$ corresponding to the partition $\kappa = (t_{1}, \dots, t_{\alpha})$ of
$t$, with $\sum_{1}^{\alpha} t_{i} = t$; $(a)_{\kappa} = \prod_{j = 1}^{\alpha} (a-(j-1)/2)_{t_{j}}$,
$(a)_{t} = a(a+1) \cdots (a+t-1)$, being the generalized hypergeometric coefficients and $\Gamma_{s}(a) =
\pi^{s(s-1)/4} \prod_{j =1}^{s} \Gamma(a-(j-1)/2)$ is the multivariate gamma function, see \cite{Mh:82};
\end{theorem}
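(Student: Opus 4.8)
The plan is to obtain (\ref{wpwm}) as the specialisation to nonsingular $\mathbf{\Theta}$ of the general singular elliptical pseudo-Wishart results of \cite{dggf:05} and \cite{dggj:06}; below I sketch the direct route in the present notation. First I would fix a nonsingular spectral decomposition $\mathbf{\Sigma} = \mathbf{P}\diag(\lambda_{1},\dots,\lambda_{K-1})\mathbf{P}^{T}$ with $\mathbf{P} \in \Re^{K \times (K-1)}$ semiorthogonal, and write $\mathbf{Y} = \mathbf{P}\mathbf{Z}$ so that $\mathbf{Z} \sim \mathcal{E}_{(K-1)\times D}(\mathbf{P}^{T}\boldsymbol{\mu}, \diag(\lambda_{i})\otimes\mathbf{\Theta}, h)$ is nonsingular. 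The density of $\mathbf{Y}$ with respect to Hausdorff measure is then proportional to $\left(\prod_{i=1}^{K-1}\lambda_{i}\right)^{-D/2}|\mathbf{\Theta}|^{-(K-1)/2}\,h[\tr\mathbf{\Theta}^{-1}(\mathbf{Y}-\boldsymbol{\mu})^{T}\mathbf{\Sigma}^{-}(\mathbf{Y}-\boldsymbol{\mu})]$, which already exhibits the factor $\left(\prod_{i=1}^{K-1}\lambda_{i}^{D/2}\right)^{-1}$ appearing in (\ref{wpwm}).

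Second, I would carry out the transformation $\mathbf{Y}\mapsto\mathbf{B}=\mathbf{Y}\mathbf{\Theta}^{-1}\mathbf{Y}^{T}$. Since $q=\min(K-1,D)$ and $\mathbf{B}=\mathbf{W}_{1}\mathbf{L}\mathbf{W}_{1}^{T}$ is singular of rank $q$, this is precisely the non-central pseudo-Wishart map whose Jacobian against the Hausdorff measure $(d\mathbf{B})$ of \cite[Section 5]{dggf:05} supplies the weight $|\mathbf{L}|^{(D-K-1)/2}$ and the normalising constant $\pi^{qD/2}/\Gamma_{q}(\Half D)$. The determinant $|\mathbf{\Theta}|^{-(K-1)/2}$ is absorbed by this Jacobian, consistent with the absence of any $\mathbf{\Theta}$ factor in the final density, while a semiorthogonal Stiefel factor $\mathbf{H}$ of $\mathbf{Y}$ remains to be integrated out.

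Third, I would treat the noncentrality by power series. Expanding the quadratic form, the argument of $h$ splits as $\tr(\mathbf{\Sigma}^{-}\mathbf{B}) + \tr(\mathbf{\Omega}) - 2\tr(\mathbf{\Theta}^{-1}\boldsymbol{\mu}^{T}\mathbf{\Sigma}^{-}\mathbf{Y})$, using $\tr\mathbf{\Theta}^{-1}\mathbf{Y}^{T}\mathbf{\Sigma}^{-}\mathbf{Y}=\tr\mathbf{\Sigma}^{-}\mathbf{B}$ and $\tr\mathbf{\Theta}^{-1}\boldsymbol{\mu}^{T}\mathbf{\Sigma}^{-}\boldsymbol{\mu}=\tr\mathbf{\Omega}$. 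Writing $v$ for the cross term and $a=\tr(\mathbf{\Sigma}^{-}\mathbf{B}+\mathbf{\Omega})$, the expansion $h(v+a)=\sum_{t}h^{(t)}(a)v^{t}/t!$ isolates the centred argument, and averaging the powers $v^{t}$ over $\mathbf{H}$ via the James zonal-polynomial integral $\int_{O(D)}\etr(2\mathbf{X}\mathbf{H})(d\mathbf{H})=\sum_{t}\sum_{\kappa}C_{\kappa}(\mathbf{X}\mathbf{X}^{T})/((\Half D)_{\kappa}\,t!)$ collapses them to $C_{\kappa}(\mathbf{\Omega}\mathbf{\Sigma}^{-}\mathbf{B})/(\Half D)_{\kappa}$; the odd powers vanish by the symmetry of the orthogonal measure, so only the even derivatives $h^{(2t)}$ survive, reproducing the double series in (\ref{wpwm}).

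The main obstacle I anticipate is the careful bookkeeping forced by the singularity of $\mathbf{\Sigma}$: one must use a symmetric generalised inverse $\mathbf{\Sigma}^{-}$ and the Hausdorff rather than Lebesgue measure throughout, track the $\lambda_{i}$ exponents correctly through the Jacobian, and verify that the Stiefel average genuinely collapses the noncentrality into the single-argument zonal polynomial $C_{\kappa}(\mathbf{\Omega}\mathbf{\Sigma}^{-}\mathbf{B})$ rather than a product of two zonal polynomials. Since \cite{dggf:05} and \cite{dggj:06} already establish these Jacobian and orthogonal-integration identities in the required singular generality, the proof reduces to substituting $\mathbf{\Theta}$ nonsingular and matching the normalising constants and noncentrality matrix $\mathbf{\Omega}=\mathbf{\Sigma}^{-}\boldsymbol{\mu}\mathbf{\Theta}^{-1}\boldsymbol{\mu}^{T}$ to those references.
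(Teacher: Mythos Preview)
Your proposal is correct and aligned with the paper's treatment: the paper gives no proof of this theorem at all, stating only that it ``is a particular case of the general result in \cite{dggf:05} or \cite{dggj:06}, when $\mathbf{\Theta}$ is non singular and the notation of this paper is assumed.'' Your plan to specialise those references to nonsingular $\mathbf{\Theta}$ is exactly this, and the additional sketch you supply (reduction to nonsingular $\mathbf{Z}$, the Jacobian producing $|\mathbf{L}|^{(D-K-1)/2}$ and $\pi^{qD/2}/\Gamma_{q}(\Half D)$, the Taylor expansion of $h$ about $\tr(\mathbf{\Sigma}^{-}\mathbf{B}+\mathbf{\Omega})$, and the orthogonal-group average collapsing the cross term to $C_{\kappa}(\mathbf{\Omega}\mathbf{\Sigma}^{-}\mathbf{B})/(\Half D)_{\kappa}$ with only even derivatives surviving) is precisely the argument underlying those references, so you have in fact gone beyond what the paper itself provides.
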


\begin{corollary}[Singular Pseudo-Wishart Gaussian  distribution]
Let us suppose that $\mathbf{Y} \sim {\mathcal{N}}_{K \times D}^{K-1,D}(\boldsymbol{\mu}, \mathbf{\Sigma}
\otimes \mathbf{\Theta})$, and let $q = \min(K-1,D)$; then the density of $\mathbf{B} = \mathbf{Y}
\mathbf{\Theta}^{-1}\mathbf{Y}^{T}$ is given by
\begin{equation}\label{ec:6}
   = C \etr\left(-\Half (\mathbf{\Sigma}^{-} \mathbf{B} -
   \mathbf{\Omega})\right) {}_{0}F_{1}\left(\Half D; \frac{1}{4} \mathbf{\Omega \Sigma}^{-} \mathbf{B}\right)
   (d\mathbf{B}),
\end{equation}
with
$$
  C = \frac{\pi^{D(q-(K-1))/2} |\mathbf{L}|^{(D-K-1)/2}}{2^{D(K-1)/2} \Gamma_{q}[\Half D]
   \left (\displaystyle \prod_{i=1}^{K-1} \lambda_{i}^{D/2} \right)},
$$
where ${}_{0}F_{1}(\cdot)$ is a hypergeometric function with a matrix argument, see \cite[p. 258]{Mh:82}.
\end{corollary}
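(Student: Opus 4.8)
The plan is to obtain this corollary as the direct specialisation of Theorem \ref{teo31} to the Gaussian density generator, so no independent computation is needed. First I would identify the correct $h$. Comparing the elliptical density (\ref{ED1}) with the singular matrix Gaussian law $\mathcal{N}_{K\times D}^{K-1,D}(\boldsymbol{\mu},\mathbf{\Sigma}\otimes\mathbf{\Theta})$, whose effective dimension is $(K-1)D$ because $\rank(\mathbf{\Sigma})=K-1$, one reads off
$$
  h(u)=(2\pi)^{-(K-1)D/2}\exp(-u/2),
$$
which is analytic and admits the power-series expansion $h(v+a)=\sum_{t}h^{(t)}(a)v^{t}/t!$ demanded by the hypothesis of Theorem \ref{teo31}.

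Next I would feed the required even-order derivatives into (\ref{wpwm}). Since $h^{(j)}(u)=(-1/2)^{j}(2\pi)^{-(K-1)D/2}e^{-u/2}$, the needed coefficients are $h^{(2t)}(u)=(1/4)^{t}(2\pi)^{-(K-1)D/2}e^{-u/2}$. Evaluating at $u=\tr(\mathbf{\Sigma}^{-}\mathbf{B}+\mathbf{\Omega})$ allows me to factor the exponential and the normalising constant out of the double sum in (\ref{wpwm}), leaving the residual series
$$
  \sum_{t=0}^{\infty}\sum_{\kappa}\frac{(1/4)^{t}\,C_{\kappa}(\mathbf{\Omega}\mathbf{\Sigma}^{-}\mathbf{B})}{t!\,(\Half D)_{\kappa}}.
$$
Then I would collapse this into the matrix hypergeometric function: because $C_{\kappa}$ is homogeneous of degree $t$ whenever $\kappa$ partitions $t$, one has $(1/4)^{t}C_{\kappa}(\mathbf{\Omega}\mathbf{\Sigma}^{-}\mathbf{B})=C_{\kappa}(\tfrac14\mathbf{\Omega}\mathbf{\Sigma}^{-}\mathbf{B})$, so the series is exactly ${}_{0}F_{1}(\Half D;\tfrac14\mathbf{\Omega}\mathbf{\Sigma}^{-}\mathbf{B})$ by the zonal-polynomial definition of $_{0}F_{1}$. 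The surviving exponential factor is $(2\pi)^{-(K-1)D/2}\etr(-\Half(\mathbf{\Sigma}^{-}\mathbf{B}+\mathbf{\Omega}))$, matching the standard noncentral form in (\ref{ec:6}).

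Finally I would reconcile the constants. Combining the prefactor $\pi^{qD/2}|\mathbf{L}|^{(D-K-1)/2}/(\Gamma_{q}(\Half D)\prod_{i}\lambda_{i}^{D/2})$ from (\ref{wpwm}) with $(2\pi)^{-(K-1)D/2}=2^{-(K-1)D/2}\pi^{-(K-1)D/2}$ gives the $\pi$-power $\pi^{D(q-(K-1))/2}$ and the factor $2^{-D(K-1)/2}$, which is precisely the stated constant $C$; the factors $|\mathbf{L}|^{(D-K-1)/2}$, $\Gamma_{q}[\Half D]$, and $\prod_{i}\lambda_{i}^{D/2}$ carry over unchanged, establishing (\ref{ec:6}).

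The main obstacle here is purely the normalisation bookkeeping rather than anything conceptual: one must insert the \emph{singular} effective dimension $(K-1)D$ (not $KD$) into the Gaussian generator, for otherwise the powers of $\pi$ and $2$ in $C$ come out wrong. The homogeneity step that reassembles the $_{0}F_{1}$ is the only other point requiring care, and it is routine once the degree-$t$ scaling of $C_{\kappa}$ is invoked.
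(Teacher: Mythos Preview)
Your proposal is correct and takes exactly the route the paper intends: the corollary is stated without proof as an immediate specialisation of Theorem~\ref{teo31} to the Gaussian generator, and you have simply made that specialisation explicit, supplying the derivative computation, the zonal-polynomial homogeneity step that produces ${}_{0}F_{1}$, and the constant bookkeeping. One small remark: your derivation (correctly) yields $\etr\!\big(-\tfrac12(\mathbf{\Sigma}^{-}\mathbf{B}+\mathbf{\Omega})\big)$, which is indeed the standard noncentral form; the minus sign appearing inside the $\etr$ in (\ref{ec:6}) is a typographical slip in the paper, not an error on your part.
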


\section{Singular Pseudo-Wishart Kotz distribution.}

Firs recall that the $K\times D$ random matrix $\mathbf{X}$ is said to have a \textit{singular matrix
multivariate symmetric Kotz type distribution} with parameters $N,r,s\in\Re$, $\boldsymbol{\mu}: K\times
D$, $\boldsymbol{\Sigma}:K\times K$, of rank $K-1$, $\boldsymbol{\Theta}:D\times D$ with $r>0$, $s>0$,
$2N+(K-1)D>2$, $\boldsymbol{\Sigma}>\mathbf{0}$, and $\boldsymbol{\Theta}>\mathbf{0}$ if its density is
%\begin{small}
$$
  \frac{sr^{(2N+(K-1)D-2)/2s}\Gamma\left((K-1)D/2\right)}{\pi^{(K-1)D/2}\Gamma\left[(2N+(K-1)D-2)/2s\right]
  \left (\displaystyle \prod_{i=1}^{K-1} \lambda_{i}^{D/2} \right)|\boldsymbol{\Theta}|^{(K-1)/2}}
$$
$$
 \times \  \left[\tr  \mathbf{\Theta}^{-1} (\mathbf{Y}-\boldsymbol{\mu})^{T}\mathbf{\Sigma}^{-}
    (\mathbf{Y}-\boldsymbol{\mu})\right]^{N-1}
  \exp\left\{-r\tr^{s} \mathbf{\Theta}^{-1} (\mathbf{Y}-\boldsymbol{\mu})^{T}\mathbf{\Sigma}^{-}
    (\mathbf{Y}-\boldsymbol{\mu})\right\}.
$$
%\end{small}
When $T=s=1$, and $R=1/2$ we get the singular matrix variate gaussian distribution.

Note  that particular singular Pseudo-Wishart distributions just depend on the general derivative
$h^{(2t)}(\cdot)$ of the elliptical generator function; it seems a trivial fact, but the general formulae
involves cumbersome expressions indexed by partitions, see \cite{cdg:09}.  In the case of Kotz type
distribution they derived the following expressions.

When $s=1$,  the Kotz type models and their general derivative simplify substantially. Thus, the
following expressions applies for Gaussian, and the so called Kotz 1, Kotz 2, with parameters $N=1, s=1,
r=1/2$; $N=2, s=1, r=1/2$;  $N=3, s=1, r=1/2$; respectively.  The generator model is given by
$$
    h(y) = \frac{r^{N-1+(K-1)D/2}\Gamma\left[(K-1)D/2\right]}{\pi^{(K-1)D/2}
  \Gamma\left[N-1+(K-1)D/2\right]}y^{N-1}\exp\{-ry\},
$$

And, the corresponding $k$-th derivative of $h$, follows from
$$
  \displaystyle\frac{d^{k}}{dy^{k}}y^{N-1}\exp[-ry],
$$
which is given by
$$
  (-r)^{k}y^{N-1}\exp[-ry]\left\{1+\sum_{v=1}^{k}\binom{k}{v}
  \left[\prod_{i=0}^{v-1}(N-1-i)\right](-ry)^{-v}\right\},\nonumber\\
$$ where $k=2t$.

For the remaining models of the example, the so termed Kotz 3, Kotz 4 and Kotz 5, with parameters  $N=2,
s=2, r=1/2$; $N=2, s=3, r=1/2$ and $N=20, s=20, r=1/2$, respectively, the generator function is given by:
$$
  h(y)=\displaystyle\frac{s r^{(2N+(K-1)D-2)/2s} \ \Gamma[(K-1)D/2]}{\pi^{(K-1)D/2} \Gamma\left[(2N+(K-1)D-2)/2s \right] }
  \ y^{N-1}\exp\left(-r y^{s}\right)
$$
meanwhile the required $k$-th derivative of $h$, follows from $\frac{d^{k}}{dy^{k}}\exp\left(-r
y^{s}\right)$, which is given by
\begin{small}
$$
  y^{T-1}e^{-Ry^{s}}\left\{\sum_{\kappa\in P_{k}}\frac{k!(-R)^{\sum_{i=1}^{k}v_{i}}
  \prod_{j=0}^{k-1}(s-j)^{\sum_{i=j+1}^{k}v_{i}}}{\prod_{i=1}^{k}v_{i}!(i!)^{v_{i}}}y^{\sum_{i=1}^{k}(s-i)v_{i}}\right.
$$
$$
  + \ \sum_{m=1}^{k}\binom{k}{m} \left[\prod_{i=0}^{m-1}(T-1-i)\right]
$$
$$
  \left.\times\sum_{\kappa\in P_{k-m}}\frac{(k-m)!(-R)^{\sum_{i=1}^{k-m}v_{i}} \prod_{j=0}^{k-m-1}
  (s-j)^{\sum_{i=j+1}^{k-m}v_{i}}}{\prod_{i=1}^{k-m}v_{i}!(i!)^{v_{i}}}y^{\sum_{i=1}^{k-m}(s-i)v_{i}-m} \right\},
$$
\end{small}
where $\sum_{\kappa\in P_{k}}$ denotes the summation over all the partitions
$$
  \kappa=\left(k^{v_{k}},(k-1)^{v_{k}-1},\ldots,3^{v_{3},2^{v_{2}},1^{v_{1}}}\right)
$$
of $k$, with $\sum_{i=1}^{k}iv_{i}=k$, i.e. $\kappa$ is a partition of $k$ consisting of $v_{1}$ ones,
$v_{2}$ twos, $v_{3}$ threes, etc. It is important to quote that all the singular Pseudo-Wishart
distributions associated with the above Kotz type kernels can be computed by some modifications of the
algorithms provided by \cite{KE06} for the Gaussian case, see for example \cite{dc:13} and similar works
of the authors on shape theory.
\end{document}